\numberwithin{equation}{section}
\newtheorem{theorem}{Theorem}[section]
\newtheorem{lemma}[theorem]{Lemma}
\theoremstyle{definition}
\newtheorem{definition}[theorem]{Definition}
\theoremstyle{remark}
\newtheorem{remark}[theorem]{Remark}
\newcommand{\Pth}{\partial_t^h}
\newcommand{\Div}{\operatorname{div}}
\newcommand{\Curl}{\operatorname{curl}}
\newcommand{\Grad}{\nabla}
\newcommand{\vr}{\varrho}
\newcommand{\vu}{\vc{u}}
\newcommand{\vc}[1]{{\bm{#1}}}
\newcommand{\weak}{\rightharpoonup}
\newcommand{\weakh}{\overset{h\to 0}{\weak}}
\newcommand{\toh}{\overset{h\to 0}{\longrightarrow}}
\newcommand{\dist}{\operatorname{dist}}
\newcommand{\norm}[1]{\left\Vert#1\right\Vert}
\newcommand{\abs}[1]{\left|#1\right|}
\newcommand{\Set}[1]{\left\{#1\right\}}
\newcommand{\jump}[1]{\left\llbracket #1\right\rrbracket}
\newcommand{\sjump}[1]{\left\llbracket #1\right\rrbracket_\Gamma}
\newcommand{\vrho}{\varrho}
\newcommand{\inb}{\in_{\text{b}}}
\newcommand{\Dt}{\Delta t}
\newcommand{\R}{\mathbb{R}}
\newcommand{\weakto}{\rightharpoonup}
\newcommand{\Om}{\ensuremath{\Omega}}
\newcommand{\cOm}{\ensuremath{\overline{\Omega}}}
\newcommand{\pOm}{\ensuremath{\partial\Omega}}
\newcommand{\Dom}{(0,T)\times\Omega}
\newcommand{\cDom}{[0,T)\times\overline{\Omega}}
\newcommand{\eff}{P_{\mathrm{eff}}}
\newcommand{\Aop}[1]{\mathcal{A}\left[#1\right]}
\newcommand{\Bop}[1]{\mathcal{B}\left[#1\right]}
\newcommand{\gradlaplaceinv}[1]{\mathcal{A}\left[#1\right]}
\newcommand{\avg}[1]{\left\langle #1 \right\rangle_\Om}
\newcommand{\solutiontext}
{
	Let $\Set{(\varrho_{h},\vc{u}_{h})}_{h>0}$ 
	be a sequence of numerical solutions 
	constructed according to \eqref{eq:num-scheme-II} 
	and Definition \ref {def:num-scheme}.
}
\title[A finite element method for compressible Stokes flow]{A convergent 
nonconforming finite element method for compressible Stokes flow}
\author[K. H. Karlsen]{Kenneth H. Karlsen}
\address[Kenneth H. Karlsen]{\newline
         Centre of Mathematics for Applications \newline
         University of Oslo\newline
         P.O. Box 1053, Blindern\newline
         N--0316 Oslo, Norway\newline
         and\newline
		 Center for Biomedical Computing,\newline
         Simula Research Laboratory\newline
         P.O. Box 134\newline
         N--1325 Lysaker, Norway}
\email[]{kennethk@math.uio.no}
\urladdr{http://folk.uio.no/kennethk}
\author[T. K. Karper]{Trygve K. Karper} 
\address[Trygve K. Karper]{\newline 
		Centre of Mathematics for Applications \newline 
		University of Oslo\newline 
		P.O. Box 1053, Blindern\newline 
		N--0316 Oslo, Norway}
\email[]{t.k.karper@cma.uio.no}
\urladdr{http://folk.uio.no/trygvekk/}
\date{\today}
\subjclass[2000]{Primary 35Q30, 74S05; Secondary 65M12}
\keywords{Semi--stationary Stokes system, compressible fluid flow, 
nonconforming finite element, discontinuous Galerkin scheme, discrete hodge decomposition, convergence}
\thanks{This work was supported by the Research Council of Norway through
an Outstanding Young Investigators Award (K. H. Karlsen). 
This article was written as part of the the international research program
on Nonlinear Partial Differential Equations at the Centre for
Advanced Study at the Norwegian Academy of Science
and Letters in Oslo during the academic year 2008--09.}
\begin{document}

\maketitle

\begin{abstract}
We propose a nonconforming finite element method for
isentropic viscous gas flow in situations 
where convective effects may be neglected. 
We approximate the continuity equation by a 
piecewise constant discontinuous Galerkin method. 
The velocity (momentum) equation is approximated by a
finite element method on div--curl form 
using the nonconforming Crouzeix--Raviart space.
Our main result is that the finite element method converges
to a weak solution. The main challenge is to demonstrate 
the strong convergence of the density approximations, which
is mandatory in view of the nonlinear pressure function.
The analysis makes use of a higher integrability estimate on
the density approximations, an equation for 
the ``effective viscous flux", and renormalized 
versions of the discontinuous Galerkin method.
\end{abstract}

\tableofcontents

\section{Introduction}
Let $\Om \subset \mathbb{R}^N$, with $N=2$ or $3$, be a bounded polygonal domain 
with Lipschitz boundary $\partial \Om$ and let $T>0$ be a fixed final time. 
In this paper, we consider the mixed hyperbolic-elliptic type system
\begin{align}
	\partial_t\varrho + \Div (\varrho\vc{u}) 
	&= 0, \quad \text{in $(0,T) \times \Omega$}, \label{eq:contequation}\\
	-\mu \Delta \vc{u} - \lambda \Grad \Div\vc{u} + \Grad p(\varrho) &= \vc{f}, 
	\quad \text{in $(0,T)\times \Omega$},
	\label{eq:momentumeq} 
\end{align}
with initial data
\begin{align}\label{initial}
	\varrho|_{t=0} & = \varrho_{0},
	\quad \textrm{in $\Omega$}.
\end{align}
The unknowns are the density $\varrho = \varrho(t,\vc{x}) \geq 0$ 
and the velocity $\vc{u} = \vc{u}(t,\vc{x}) \in \R^N$, with $\vc{x} \in \Omega$ and $t \in (0,T)$. 
The source term $\vc{f}$ is a given function representing body forces such 
as gravity. We denote by $\Div$ and $\Grad$ the usual spatial divergence 
and gradient operators and by $\Delta$ the Laplace operator. 
At the boundary $\partial \Om$, the system 
is supplemented with the homogenous Dirichlet condition
\begin{equation*}
	\vc{u} = 0, \quad \textrm{on $(0,T) \times \partial \Om$.}
\end{equation*}

The pressure $p(\varrho)$ is governed by the equation of state 
$p(\varrho) = a\varrho^\gamma$, $a>0$. Typical values of $\gamma$ ranges 
from a maximum of $\frac{5}{3}$ for monoatomic gases,  
through $\frac{7}{5}$ for diatomic gases \emph{including air}, to lower values close to $1$ for 
polyatomic gases at high temperatures. Throughout this paper, we will always 
assume that $\gamma>1$, which is the most difficult case. 
The viscosity coefficients $\mu, \lambda$ are assumed to be constant and 
satisfy $\mu> 0$, $N\lambda + 2 \mu \geq 0$.

The system \eqref{eq:contequation}--\eqref{eq:momentumeq} is a gross
simplification of the isentropic compressible Navier--Stokes equations. 
It provides a reasonable approximation in situations where convective effects may be neglected. 
Solutions of \eqref{eq:contequation}--\eqref{eq:momentumeq} have also been 
utilized by Lions \cite{Lions:1998ga} to construct 
solutions of the isentropic compressible Navier--Stokes equations.  
Regarding the mathematical theory, the semi--stationary system \eqref{eq:contequation}--\eqref{initial} 
has been analyzed by Lions \cite[Section 8.2]{Lions:1998ga}, among many others. 
More precisely, he proves the existence of weak 
solutions and provide some uniqueness and higher regularity results. 

In the literature one can find a variety of numerical methods for 
the compressible Stokes and Navier--Stokes equations. 
However, there are few results with reference to the convergence properties
of these methods, especially in several dimensions.
In one dimension, we refer to the works of Hoff 
and his collaborators \cite{Zarnowski:1991uq, Zhao:1994fk, Zhao:1997qy}. 
These results apply to the compressible Navier--Stokes equations written in Lagrangian 
form and requires the initial density to be of bounded variation. 
In several dimensions there are a few very recent results.  
In \cite{Gallouet1, Gallouet2}, the authors present a convergent finite element method 
for a Stokes model. This model is a stationary version of \eqref{eq:contequation}--\eqref{eq:momentumeq}. 
In their finite element method the approximation spaces for the density and velocity are the same.  
Moreover, their method is based on the standard weak formulation 
of the velocity equation \eqref{eq:momentumeq}. 
Since the finite element space is non-conforming, this approach may not preserve the div--curl 
structure of the continuous system. This complicates the convergence proof. 
In \cite{Gallouet1, Gallouet2},  additional stabilization terms are needed in the  
discretization of the continuity equation \eqref{eq:contequation}. 
In \cite{Karlsen1}, we construct a convergent mixed finite 
element method for \eqref{eq:contequation}--\eqref{eq:momentumeq}.
However, this method is based on a vorticity formulation 
of the velocity equation, which is only valid for the Navier slip boundary condition:
$$
\vu \cdot \nu  = 0, \quad \Curl \vu \times \nu = 0, \quad \text{ on }\pOm.
$$
In addition, the velocity is approximated by a $H(\Div)$ (Nedelec) element.

We now outline the numerical method proposed in this paper.  
First of all, the density $\vr$ is approximated by 
piecewise constants in the spatial and temporal variables.  
For the approximation of the velocity $\vc{u}$ we 
utilize the \emph{Crouzeix--Raviart} element 
space \cite{Crouzeix:1973qy} in the spatial 
variable, denoted by $\vc{V}_h(\Om)$, and 
piecewise constants in the temporal variable.
Hence, the numerical method is nonconforming in the 
sense that $\vc{V}_h \not \subset \vc{W}^{1,2}_0(\Om)$. 
In what follows, we mostly suppress the time variable $t$ and 
refer to subsequent sections for precise statements.
For the continuity equation \eqref{eq:contequation} we 
make use of a discontinuous Galerkin method. 
To achieve stability, the numerical fluxes are evaluated in the 
upwind direction dictated by the velocity. However, since the velocity 
space is not continuous across element faces, average 
velocities are used in this discretization. 
Our discontinuous Galerkin method is equivalent to a standard finite 
volume method for the continuity equation \cite{Feisteuer,Gallouet:2007lr}. 
In \cite{Karlsen1}, we use a similar discontinuous Galerkin method with the velocity 
in the div conforming Nedelec space of the first order and kind. 
Since the method used herein only depends on the average normal 
velocity at faces, the approximations constructed by this method are 
also solutions to the discrete continuity equation of \cite{Karlsen1}. 
More precisely, if the pair $(\vr_h,\vu_h)$ solves the discrete continuity 
equation proposed herein, then $(\vr_h,\Pi_h^N \vu_h)$ 
is a solution to the discrete continuity equation of \cite{Karlsen1}, where $\Pi_h^N$ 
is the canonical interpolation operator onto the div conforming Nedelec 
space of first order and kind.  As a consequence, several 
of the favorable properties of the method in \cite{Karlsen1} 
continue to hold for the continuity method herein. 
In particular, renormalized formulations, 
weak time-continuity, and consistency bounds are 
readily obtained by exploiting this connection.

To discretize the velocity equation \eqref{eq:momentumeq} we bring into service 
a non-standard finite element formulation, which starts off from the identity
\begin{equation}\label{eq:laplace}
	\int_{\Om}D\vc{u}D\vc{v}\ dx 
	= \int_{\Om}\Curl \vc{u}\Curl \vc{v} + \Div \vc{u}\Div \vc{v}\ dx, 
\end{equation}
valid for all $\vc{u} \in \vc{W}^{1,2}_0(\Om)$. 
However, since the velocity space is nonconforming, this identity does not hold
discretely, but we insist on utilizing the right-hand 
side of \eqref{eq:laplace} as a starting point for 
discretizing the velocity equation. 
Utilizing the form on the right--hand side,
it is possible to split the curl part of the 
Laplacian away from the divergence part.
By setting $\vc{v} = \Grad s$, we obtain the divergence part, while 
$\vc{v} = \Curl \vc{\eta}$ gives the curl part.
Of course, to satisfy boundary conditions, this 
argument must be localized. Discretely, this still 
holds for the element space $\vc{V}_h$
since this admits the exact orthogonal Hodge decomposition
$$
\vc{V}_h = \Curl \vc{\zeta}_h + \Grad S_h.
$$
Hence, the curl and divergence part of the Laplace operator can 
be separated by using test functions $\vc{v}_h = \Curl \vc{\zeta}_h$, $\vc{\zeta}_h \in \vc{W}_h$
or $\vc{v}_h = \Grad s_h, ~s_h \in S_h$. This property lies at the heart of 
the matter in the upcoming convergence analysis.

Contrasting with the standard situation in which the left--hand side of \eqref{eq:laplace}
is used, a discretization based on the right-hand 
side of \eqref{eq:laplace} does not converge unless additional terms
controlling the discontinuities of the velocity are added, cf.~Brenner \cite{Brenner}. 
The standard discretization of the Laplacian (based on the left--hand side
of \eqref{eq:laplace}) leads to a $L^2$ bound on $\Grad_h \vu_h$,
where $\Grad_h$ is the gradient restricted to each element $E$.
For the velocity space $\vc{V}_h$, this bound actually controls
the jump of $\vu_h$ across faces. This in turn, is sufficient 
to conclude that $\Grad_h \vu_h \weak \Grad \vu$ as $h \rightarrow 0$. 
When discretizing the Laplacian based on the right--hand side of \eqref{eq:laplace},
one obtains $L^2$ bounds on $\Curl_h \vu_h$ and $\Div_h \vu_h$, 
where $\Curl_h$ and $\Div_h$ denotes the curl and divergence 
operators, respectively, restricted to each element $E$.
The jump of $\vu_h$ across faces is not 
controlled by these terms. In fact, $\vc{V}_h$ contains 
non-zero functions for which both $\Div_h$ and $\Curl_h$ are 
zero. For this reason, extra terms controlling the jump
of $\vu_h$ across faces need to be added.

In choosing these terms we are inspired by 
the work of Brenner \cite{Brenner}, which deals with 
two-dimensional elliptic operators of 
the form ``$\Curl \Curl - \beta \Grad \Div$". 
To be more precise, our finite element method for the velocity equation
\eqref{eq:momentumeq} seeks $\vu_h \in \vc{V}_h(\Om)$ such that
\begin{equation}\label{intro:fem}
	\begin{split}
		&\int_{\Om} \mu\Curl_{h} \vc{u}_{h}\Curl_{h} \vc{v}_{h}
		+ \left[(\mu + \lambda)\Div_{h} \vc{u}_{h}
		-p(\varrho_{h})\right]\Div_{h} \vc{v}_{h}\ dx \\
		&\qquad\quad 
		+\mu \sum_{\Gamma \in \Gamma^I_{h}} h^{\epsilon -1}
		\int_{\Gamma}\sjump{\vc{u}_{h}\cdot \nu}\sjump{\vc{v}_{h}\cdot \nu}
		+ \sjump{\vc{u}_{h}\times \nu}\sjump{\vc{v}_{h}\times \nu}\ dS(x) 
		\\ & \quad
		= \int_{\Omega}\vc{f}_{h}\vc{v}_{h}\ dx, 
		\qquad \forall \vc{v}_{h}Ê\in \vc{V}_{h}(\Om),
	\end{split}
\end{equation}
for some fixed $\epsilon \in (0,1)$, where $\rho_h$ and $\vc{f}_h$ 
are given piecewise functions on $\Omega$ with respect 
to a tetrahedral mesh $E_h$ with elements $E$. 
Moreover, $\Gamma^I_h$ denote the set of internal faces, 
and $\jump{\cdot}_\Gamma$ denotes the jump across a face $\Gamma\in\Gamma^I_h$. 
The scaling factor $h^{\epsilon}$ is required 
to prove convergence of the finite element method. 
Of course, the size of $\epsilon$ will affect 
the accuracy of the method \cite{Brenner} 
and should be fixed very small in practical computations.

For any fixed $h>0$, let $(\vr_h, \vu_h) = (\vr_h, \vu_h)(t,x)$ 
denote the numerical solution to the compressible Stokes system. 
Our goal is to prove that $\Set{(\vrho_{h},\vc{u}_{h})}_{h>0}$ 
converges along a subsequence to a weak solution. 
The main challenge is to show that the density approximations 
$\vrho_h$, which a priori is only weakly compact in $L^2$, in fact converges strongly. 
Strong convergence is needed when sending $h \rightarrow 0$ in the nonlinear pressure function.
It is this issue that motivates the above nonconforming finite element method.
Since the finite element space $\vc{V}_h$ is piecewise linear and 
totally determined by its value at the faces, Green's theorem yield
$$
\Div_h \Pi_h^V \vc{v} = \Pi_h^Q \Div \vc{v}, \qquad \Curl_h \Pi_h^V\vc{v} = \Pi_h^Q \Curl \vc{v},
$$
where $\Pi_h^V$ is the canonical interpolation operator onto $\vc{V}_h$ and
$\Pi_h^Q$ is the $L^2$ projection onto piecewise constants. 
Consequently, the projection of a divergence or curl free function 
is again (piecewise) divergence or curl free. 
Using this, we see that the function $\vc{v}_h = \Pi_h^V \Grad \Delta^{-1} \vr_h$
is a solution to the div--curl problem
\begin{equation*}
	\Div_h \vc{v}_h = \vr_h, \quad \Curl_h \vc{v}_h = 0,	
\end{equation*}
away from the boundary. 
By using $\vc{v}_h$ as test function in \eqref{intro:fem}, the curl term vanishes, 
while the remaining terms constitute the so-called effective viscous flux 
$\eff(\vr_h, \vu_h)= p(\vr_h) - (\lambda + \mu)\Div \vu_h$, 
the source term, and the jump terms. The latter terms are shown to 
converge to zero. Using this, we are able to prove following weak continuity property:
\begin{equation}\label{eq:intro-weakcont}
	\lim_{h \rightarrow 0}\iint \eff(\vrho_{h},\vc{u}_{h})\,\vrho_{h}\phi ~dxdt 
	=\iint \overline{\eff}\, \vrho\phi ~dxdt
	\quad \text{($\overline{\eff}, \vrho$ are weak $L^2$ limits),}
\end{equation}
for all $\phi \in C_0^\infty$. This is the main ingredient in the 
strong convergence proof for the density approximations $\vr_h$.
The argument is inspired by the work of Lions 
on the compressible Navier-Stokes equations, cf.~\cite{Lions:1998ga}.

If we instead of \eqref{intro:fem}, discretize the Laplacian based on the left--hand 
side of \eqref{eq:laplace}, then the above analysis becomes more involved. 
In particular, it seems difficult to establish the key property \eqref{eq:intro-weakcont}.
In this case, we would need to establish 
$$
\int \int \Grad_h \vu_h \Grad_h \Pi_h^V 
\left[\Grad \Delta^{-1} \vr_h\right] - \Div_h \vu_h \vr_h~dxdt \rightarrow 0, 
\quad \text{as }h \rightarrow 0,
$$
which is intricate since all the involved quantities are only weakly convergent. 

The remaining part of this paper is organized as follows: 
In Section \ref{sec:prelim}, we first introduce some
relevant notation and state a few basic results from analysis. 
Next, we formulate our notion of a weak solution. 
Finally, we introduce the finite element spaces and derive 
some of their basic properties. 
In Section \ref{sec:numerical-method}, we present the 
numerical method and state our main convergence result. 
This section also provides a result regarding the 
existence of solutions to the discrete equations.
In Section \ref{sec:basic}, we derive stability and higher integrability
results. Section \ref{sec:conv} is devoted to proving the convergence 
result stated in Section \ref{sec:numerical-method}.

\section{Preliminary material}\label{sec:prelim}

\subsection{Functional spaces and analysis results}
We denote the spatial divergence and curl operators 
by $\Div$ and $\Curl$, respectively. As usual in the two dimensions, we 
denote both the rotation operator taking scalars into vectors 
and the curl operator taking vectors into scalars by $\Curl$.  

We will make use of the spaces
\begin{align*}
	\vc{W}^{\Div,2}(\Omega) & = 
	\Set{\vc{v} \in \vc{L}^2(\Omega): \Div \vc{v} \in L^2(\Omega)}, \\
	\vc{W}^{\Curl,2}(\Omega) & = 
	\Set{\vc{v} \in \vc{L}^2(\Omega): \Curl \vc{v} \in \vc{L}^2(\Omega)},
\end{align*}
where $\nu$ denotes the unit outward pointing normal vector on $\partial \Omega$. 
If $\vc{v}\in \vc{W}^{\Div, 2}(\Omega)$ satisfies $\vc{v} \cdot \nu|_{\partial \Omega}=0$, we 
write $\vc{v}\in \vc{W}^{\Div, 2}_{0}(\Omega)$. Similarly, $\vc{v}\in \vc{W}^{\Curl, 2}_0(\Omega)$ 
means $\vc{v}\in\vc{W}^{\Curl, 2}(\Omega)$ and $\vc{v} \times \nu|_{\partial \Omega} = 0$. 
From \cite{Girault:1986fu},
\begin{equation*}
	\vc{W}^{1,2}_0(\Om) = \vc{W}^{\Curl,2}_0 \cap \vc{W}^{\Div,2}_0.
\end{equation*}

The next lemma lists some basic results from functional analysis 
to be used in subsequent arguments (for proofs, see, e.g., \cite{Feireisl:2004oe}). 
Throughout the paper we use overbars to denote 
weak limits, in spaces that should be clear from the context.

\begin{lemma}\label{lem:prelim} 
Let $O$ be a bounded and open subset of $\R^M$ with $M\ge 1$.  
Suppose $g\colon \R\to (-\infty,\infty]$ is a lower semicontinuous 
convex function and $\Set{v_n}_{n\ge 1}$ is a sequence of 
functions on $O$ for which $v_n\weakto v$ in $L^1(O)$, $g(v_n)\in L^1(O)$ for each 
$n$, $g(v_n)\weakto \overline{g(v)}$ in $L^1(O)$. Then 
$g(v)\le \overline{g(v)}$ a.e.~on $O$, $g(v)\in L^1(O)$, and
$\int_O g(v)\ dy \le \liminf_{n\to\infty} \int_O g(v_n) \ dy$. 
If, in addition, $g$ is strictly convex on an open interval
$(a,b)\subset \R$ and $g(v)=\overline{g(v)}$ a.e.~on $O$, 
then, passing to a subsequence if necessary, 
$v_n(y)\to v(y)$ for a.e.~$y\in \Set{y\in O\mid v(y)\in (a,b)}$.
\end{lemma}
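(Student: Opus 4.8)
The plan is to prove the two assertions separately: the first by the classical affine–minorant (duality) argument, and the second by reducing it to the statement that the midpoint convexity defect of $g$ along the sequence tends to zero in $L^1$.

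\textbf{First assertion.} Since $g$ is convex, lower semicontinuous and not identically $+\infty$ (otherwise $g(v_n)\in L^1(O)$ would fail), it is the pointwise supremum of a countable family of affine functions, $g(z)=\sup_{k\ge1}(a_kz+b_k)$. Fix $k$ and a nonnegative $\varphi\in L^\infty(O)$. From the pointwise bound $g(v_n)\ge a_kv_n+b_k$ and the two weak convergences $v_n\weakto v$ and $g(v_n)\weakto\overline{g(v)}$ in $L^1(O)$, I would pass to the limit in $\int_Og(v_n)\varphi\,dy\ge\int_O(a_kv_n+b_k)\varphi\,dy$ to obtain $\int_O\overline{g(v)}\varphi\,dy\ge\int_O(a_kv+b_k)\varphi\,dy$; as $\varphi\ge0$ is arbitrary this gives $\overline{g(v)}\ge a_kv+b_k$ a.e., and taking the supremum over the countable index set $k$ yields $\overline{g(v)}\ge g(v)$ a.e. In particular $g(v)$ is squeezed between the $L^1$ functions $a_1v+b_1$ and $\overline{g(v)}$, so $g(v)\in L^1(O)$, and testing with $\varphi\equiv1$ gives $\int_Og(v)\,dy\le\int_O\overline{g(v)}\,dy=\lim_n\int_Og(v_n)\,dy$.

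\textbf{Second assertion.} Assume in addition that $g$ is strictly convex on $(a,b)$ and $g(v)=\overline{g(v)}$ a.e. Put $w_n:=\tfrac12(v_n+v)$, so $w_n\weakto v$ in $L^1(O)$, and by convexity $g(w_n)\le\tfrac12 g(v_n)+\tfrac12 g(v)$ pointwise; combined with the affine minorant $a_1 w_n+b_1\le g(w_n)$ this shows $\{g(w_n)\}$ is bounded and equi‑integrable in $L^1(O)$ (the positive part is dominated by the equi‑integrable family $\tfrac12 g(v_n)+\tfrac12 g(v)$, the negative part by $|a_1 w_n+b_1|$), so along a subsequence $g(w_n)\weakto\overline{g(w)}$ by Dunford–Pettis. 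Applying the first assertion to $\{w_n\}$ gives $g(v)\le\overline{g(w)}$, while passing to the weak limit in $g(w_n)\le\tfrac12 g(v_n)+\tfrac12 g(v)$ and using $\overline{g(v)}=g(v)$ gives $\overline{g(w)}\le g(v)$; hence $\overline{g(w)}=g(v)$ a.e. and $\int_Og(w_n)\,dy\to\int_Og(v)\,dy$. Therefore the nonnegative functions $\Delta_n:=\tfrac12 g(v_n)+\tfrac12 g(v)-g\!\left(\tfrac12(v_n+v)\right)\ge0$ satisfy
$$\int_O\Delta_n\,dy\ \longrightarrow\ \tfrac12\lim_n\int_Og(v_n)\,dy+\tfrac12\int_Og(v)\,dy-\lim_n\int_Og(w_n)\,dy=0,$$
so $\Delta_n\to0$ in $L^1(O)$ and, passing to a further subsequence, $\Delta_n(y)\to0$ for a.e.\ $y\in O$.

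\textbf{From the defect to pointwise convergence, and the main obstacle.} Fix $y$ with $v(y)\in(a,b)$ and $\Delta_n(y)\to0$; I claim $v_n(y)\to v(y)$. If not, some subsequence $v_{n_j}(y)$ stays at distance $\ge\delta>0$ from $v(y)$, and one must rule out that it tends to a value $w\neq v(y)$, finite or infinite. For finite $w$: a convex function whose graph meets the chord through $(v(y),g(v(y)))$ and $(w,g(w))$ both at the interior point $v(y)\in(a,b)$ and at the midpoint of that chord would be affine in a neighborhood of $v(y)$, contradicting strict convexity on $(a,b)$; hence, using lower semicontinuity of $g$, $\liminf_j\Delta_{n_j}(y)>0$, a contradiction. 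Escape of $v_{n_j}(y)$ to $\pm\infty$ is excluded by working with $g$ minus its affine minorant (a nonnegative convex function), since then the midpoint $\tfrac12(v_{n_j}(y)+v(y))$ cannot absorb the growth of $g$ at $v_{n_j}(y)$ while keeping $\Delta_{n_j}(y)$ bounded. This last case analysis — making the strict‑convexity estimate quantitative and controlling the behavior at infinity — is the only delicate point; everything else is soft functional analysis (weak $L^1$ compactness via Dunford–Pettis, the affine representation of lower semicontinuous convex functions, and passage to the limit in pointwise convex inequalities). Combining, $v_n(y)\to v(y)$ for a.e.\ $y\in\{y\in O\mid v(y)\in(a,b)\}$, which is the assertion.
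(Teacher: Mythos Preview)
Your argument is correct. Note, however, that the paper does not actually prove this lemma; it merely cites Feireisl's monograph, where the result is obtained via the Young measure machinery: one represents the weak limits as $v(y)=\int\lambda\,d\nu_y(\lambda)$ and $\overline{g(v)}(y)=\int g(\lambda)\,d\nu_y(\lambda)$ for the Young measure $\{\nu_y\}$ generated by a subsequence of $\{v_n\}$, so that the hypothesis $g(v)=\overline{g(v)}$ becomes equality in Jensen's inequality and forces $\nu_y=\delta_{v(y)}$ wherever $v(y)\in(a,b)$, giving convergence in measure on that set.

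Your midpoint--defect route is genuinely different and more elementary, avoiding Young measures altogether; the trade--off is the pointwise step at the end, which you rightly single out as the delicate part. That step can be made clean without splitting into cases: subtract a supporting affine function of $g$ at $c:=v(y)$ so that (after relabelling) $g\ge 0$, $g(c)=0$, and by strict convexity $g>0$ on $(a,b)\setminus\{c\}$. Then $\Delta_n(y)=\phi(v_n(y)-c)$ with $\phi(s):=\tfrac12 g(c+s)-g(c+s/2)$, and the chord--slope monotonicity of convex functions shows $\phi$ is nondecreasing on $[0,\infty)$ and nonincreasing on $(-\infty,0]$. Choosing $\delta>0$ with $[c-\delta,c+\delta]\subset(a,b)$, strict convexity gives $\phi(\pm\delta)>0$, so $|v_n(y)-c|\ge\delta$ implies $\Delta_n(y)\ge\min\{\phi(\delta),\phi(-\delta)\}>0$, uniformly in $n$ and regardless of whether $v_n(y)$ stays bounded. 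This closes the argument. The Young--measure proof is shorter once that framework is available; your proof is self--contained and makes transparent exactly how strict convexity on $(a,b)$ enters.
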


Let $X$ be a Banach space and $X^\star$ its dual.  The space
$X^\star$ equipped with the weak-$\star$ topology is denoted by
$X^\star_{\mathrm{weak}}$, while $X$ equipped with the weak topology
is denoted by $X_{\mathrm{weak}}$. By the Banach-Alaoglu theorem, 
bounded balls in $X^\star$ are $\sigma(X^\star,X)$-compact.  If $X$
separable, the weak-$\star$ topology is metrizable on bounded
sets in $X^\star$, which makes it possible to consider the metric space
$C\left([0,T];X^\star_{\mathrm{weak}}\right)$ of functions $v:[0,T]\to
X^\star$ that are continuous with respect to the weak topology. We
have $v_n\to v$ in $C\left([0,T];X^\star_{\mathrm{weak}}\right)$ if
$\langle v_n(t),\phi \rangle_{X^\star,X}\to \langle v(t),\phi
\rangle_{X^\star,X}$ uniformly with respect to $t$, for any $\phi\in
X$. The succeding lemma is a consequence of the Arzel\`a-Ascoli
theorem:

\begin{lemma}\label{lem:timecompactness}
Let $X$ be a separable Banach space, and suppose $v_n\colon [0,T]\to
X^\star$, $n=1,2,\dots$, is a sequence for which 
$\norm{v_n}_{L^\infty([0,T];X^\star)}\le C$, for some constant $C$ independent of $n$. 
Suppose the sequence $[0,T]\ni t\mapsto \langle v_n(t),\Phi \rangle_{X^\star,X}$, $n=1,2,\dots$, 
is equi-continuous for every $\Phi$ that belongs to a dense subset of $X$.  
Then $v_n$ belongs to $C\left([0,T];X^\star_{\mathrm{weak}}\right)$ for every
$n$, and there exists a function $v\in 
C\left([0,T];X^\star_{\mathrm{weak}}\right)$ such that along a 
subsequence as $n\to \infty$ there holds $v_n\to v$ in 
$C\left([0,T];X^\star_{\mathrm{weak}}\right)$.
\end{lemma}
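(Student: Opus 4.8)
The plan is to run the Arzel\`a--Ascoli theorem at the level of scalar-valued functions and then pass from a countable dense subset of $X$ back to all of $X$ by a density argument, crucially exploiting the uniform bound $\norm{v_n}_{L^\infty([0,T];X^\star)}\le C$. Let $D$ denote the dense subset of $X$ on which the equi-continuity hypothesis is assumed.

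First I would check that each $v_n$ already belongs to $C([0,T];X^\star_{\mathrm{weak}})$. Fix $\Phi\in X$ and $\varepsilon>0$, and pick $\Psi\in D$ with $\norm{\Phi-\Psi}_X\le\varepsilon/(3C)$. Then for all $s,t\in[0,T]$,
\[
\abs{\langle v_n(t)-v_n(s),\Phi\rangle_{X^\star,X}}
\le 2C\norm{\Phi-\Psi}_X
+\abs{\langle v_n(t)-v_n(s),\Psi\rangle_{X^\star,X}},
\]
and, since $t\mapsto\langle v_n(t),\Psi\rangle_{X^\star,X}$ is continuous, the right-hand side is $\le\varepsilon$ once $\abs{t-s}$ is small enough. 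Hence $t\mapsto\langle v_n(t),\Phi\rangle_{X^\star,X}$ is continuous for every $\Phi\in X$, i.e.\ $v_n\in C([0,T];X^\star_{\mathrm{weak}})$.

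Next, using separability of $X$, fix a countable set $\Set{\Phi_k}_{k\ge1}\subseteq D$ that is dense in $X$. For each fixed $k$, the real-valued functions $t\mapsto\langle v_n(t),\Phi_k\rangle_{X^\star,X}$, $n\ge1$, are uniformly bounded by $C\norm{\Phi_k}_X$ and equi-continuous on the compact interval $[0,T]$, so by the classical Arzel\`a--Ascoli theorem a subsequence converges uniformly on $[0,T]$. A diagonal extraction over $k$ yields one subsequence, still labelled $\Set{v_n}$, along which $\langle v_n(\cdot),\Phi_k\rangle_{X^\star,X}$ converges uniformly on $[0,T]$ for every $k$. For fixed $t$, the assignment $\Phi_k\mapsto\lim_n\langle v_n(t),\Phi_k\rangle_{X^\star,X}$ is linear on $\Set{\Phi_k}$ and bounded by $C\norm{\Phi_k}_X$, hence extends uniquely to an element $v(t)\in X^\star$ with $\norm{v(t)}_{X^\star}\le C$.

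Finally, a standard $3\varepsilon$ argument --- approximate an arbitrary $\Phi\in X$ by some $\Phi_k$, control the two error terms by $C\norm{\Phi-\Phi_k}_X$, and control the remaining term by the uniform-in-$t$ convergence along the subsequence --- shows that $\langle v_n(t),\Phi\rangle_{X^\star,X}\to\langle v(t),\Phi\rangle_{X^\star,X}$ uniformly in $t\in[0,T]$, for every $\Phi\in X$. In particular each $t\mapsto\langle v(t),\Phi\rangle_{X^\star,X}$ is a uniform limit of continuous functions, hence continuous, so $v\in C([0,T];X^\star_{\mathrm{weak}})$; and, by the very definition of convergence in this space, $v_n\to v$ in $C([0,T];X^\star_{\mathrm{weak}})$. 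I do not expect any genuine obstacle: the compactness input is entirely classical, and the only points needing care are the systematic use of the uniform bound $C$ to make the density approximations uniform in $n$ and in $t$, and the bookkeeping in the diagonal argument.
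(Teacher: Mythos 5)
Your proof is correct and is exactly the argument the paper intends: the paper states this lemma without proof, remarking only that it ``is a consequence of the Arzel\`a--Ascoli theorem,'' and your combination of Arzel\`a--Ascoli on the scalar functions $t\mapsto\langle v_n(t),\Phi_k\rangle$, a diagonal extraction over a countable dense subset of $D$ (which exists since subsets of separable metric spaces are separable), and the $3\varepsilon$ density argument using the uniform bound $C$ is the standard way to carry that out.
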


Later we frequently obtain a priori estimates for a sequence $\Set{v_n}_{n\ge 1}$ 
that we make known as ``$v_n\inb X$'' for a given functional space $X$. What this really means is that 
we have a bound on $\norm{v_n}_X$ that is independent of $n$.

The following discrete version of a lemma 
due to Lions \cite[Lemma 5.1]{Lions:1998ga} 
will prove useful in the convergence analysis.

\begin{lemma}\label{lemma:aubinlions}
Given $T>0$ and a small number $h>0$, write 
$(0,T] = \cup_{k=1}^M(t_{k-1}, t_{k}]$ with $t_{k} = hk$ and $Mh = T$. 
Let $\{f_{h}\}_{h>0}^\infty$, $\{g_{h}\}_{h>0}^\infty $ be two sequences such that:
\begin{enumerate}
\item{} the mappings $t \mapsto g_{h}(t,x)$ and $t\mapsto f_{h}(t,x)$ 
are constant on each interval $(t_{k-1}, t_{k}],\ k=1, \ldots, M$.

\item{}$\{f_{h}\}_{h>0}$ and $\{g_{h}\}_{h>0}$ converge weakly to 
$f$ and $g$ in $L^{p_{1}}(0,T;L^{q_{1}}(\Om))$ and 
$L^{p_{2}}(0,T;L^{q_{2}}(\Om))$, respectively, 
where $1 < p_{1},q_{1}< \infty$ and
$$
\frac{1}{p_{1}} + \frac{1}{p_{2}} = \frac{1}{q_{1}} + \frac{1}{q_{2}} = 1.
$$

\item{} the discrete time derivative satisfies
$$
\frac{g_{h}(t,x) - g_{h}(t-h,x)}{h} \in_{b} L^1(0,T;W^{-1,1}(\Om))
$$

\item{}$\|f_{h}(t,x) - f_{h}(t,x-\xi)\|_{L^{p_{2}}(0,T;L^{q_{2}}(\Om))} 
\rightarrow 0$ as $|\xi|\rightarrow 0$, uniformly in $h$.
\end{enumerate}

Then $g_{h}f_{h} \weak gf$ in the sense of distributions on $\Dom$.
\end{lemma}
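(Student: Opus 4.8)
The plan is to show $\iint_{\Dom} g_h f_h\,\phi\,\dxdt \toh \iint_{\Dom} gf\,\phi\,\dxdt$ for every $\phi\in C^\infty_0(\Dom)$, which suffices since $\{g_h f_h\}_{h>0}\inb L^1(\Dom)$ by H\"older's inequality and the conjugacy relations in~(2). Following the idea behind Lions' continuous argument, I would exploit that~(3) and~(4) encode complementary regularity: $g_h$ is regular in time (bounded discrete time derivative) but merely integrable in space, while $f_h$ is regular in space (equi-continuous translates) but merely integrable in time. Fix a standard spatial mollifier $\omega_\delta$ on $\R^N$ and a standard temporal mollifier $\theta_\eta$ on $\R$, put $f_h^\delta:=\omega_\delta\ast_x f_h$ and $g_h^\eta:=\theta_\eta\ast_t g_h$ (for $\delta,\eta$ small these mollifications involve only values of $f_h,g_h$ inside $\Dom$, as $\phi$ is compactly supported), and split
\[
\iint_{\Dom} g_h f_h\,\phi\,\dxdt \;=\; \iint g_h(f_h-f_h^\delta)\phi \;+\; \iint(g_h-g_h^\eta)f_h^\delta\phi \;+\; \iint g_h^\eta f_h^\delta\phi ,
\]
calling the three terms on the right $I_{h,\delta}$, $II_{h,\eta,\delta}$, $III_{h,\eta,\delta}$.

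The terms $I_{h,\delta}$ and $II_{h,\eta,\delta}$ will be made small uniformly in $h$. For $I_{h,\delta}$, Jensen's inequality and hypothesis~(4) give $\norm{f_h-f_h^\delta}_{L^{p_1}(0,T;L^{q_1})}\le\sup_{\abs{\xi}\le\delta}\norm{f_h(t,\cdot)-f_h(t,\cdot-\xi)}_{L^{p_1}(0,T;L^{q_1})}\to0$ as $\delta\to0$, uniformly in $h$, whence $\abs{I_{h,\delta}}\le C\,\norm{g_h}_{L^{p_2}(L^{q_2})}\norm{f_h-f_h^\delta}_{L^{p_1}(L^{q_1})}\to0$ uniformly in $h$ (the exponents being conjugate by~(2)). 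For $II_{h,\eta,\delta}$ we bring in~(3): since $g_h$ is piecewise constant in time with jumps only at the nodes (hypothesis~(1)), telescoping the discrete time increments yields $\int_0^T\norm{g_h(t,\cdot)-g_h(t-s,\cdot)}_{W^{-1,1}(\Om)}\,dt\le\abs{s}\,\norm{\tfrac{g_h(\cdot,\cdot)-g_h(\cdot-h,\cdot)}{h}}_{L^1(0,T;W^{-1,1}(\Om))}\le C\abs{s}$, hence $\norm{g_h-g_h^\eta}_{L^1(0,T;W^{-1,1}(\Om))}\le C\eta$; combined with the uniform bound $g_h\inb L^{p_2}(0,T;W^{-1,1}(\Om))$, interpolation in time controls $\norm{g_h-g_h^\eta}_{L^{p_2}(0,T;W^{-1,1}(\Om))}$ by a quantity vanishing with $\eta$, and pairing against $f_h^\delta\phi\inb L^{p_1}(0,T;W^{1,\infty}_0(\Om))$ (valid for fixed $\delta$, as $\norm{\omega_\delta\ast_x f_h(t,\cdot)}_{W^{1,\infty}}\le C(\delta)\norm{f_h(t,\cdot)}_{L^{q_1}}$) through the $W^{-1,1}$--$W^{1,\infty}_0$ duality gives $\limsup_{h\to0}\abs{II_{h,\eta,\delta}}\le C(\delta)\,o_\eta(1)$.

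The main work is the term $III_{h,\eta,\delta}$, with $\eta,\delta$ fixed and $h\to0$. Uniformly in $h$, $g_h^\eta$ is bounded and Lipschitz as a map $[0,T]\to L^{q_2}(\Om)$, since $\norm{g_h^\eta(t,\cdot)}_{L^{q_2}}+\norm{\partial_t g_h^\eta(t,\cdot)}_{L^{q_2}}\le\bigl(\norm{\theta_\eta}_{L^{p_2'}}+\norm{\theta_\eta'}_{L^{p_2'}}\bigr)\norm{g_h}_{L^{p_2}(0,T;L^{q_2})}\le C(\eta)$; as $L^{q_1}(\Om)$ is separable with $L^{q_1}(\Om)^\star=L^{q_2}(\Om)$, Lemma~\ref{lem:timecompactness} (with $X=L^{q_1}(\Om)$) together with $g_h\weak g$ in $L^{p_2}(L^{q_2})$ (which identifies the limit) gives $g_h^\eta\to g^\eta:=\theta_\eta\ast_t g$ in $C\bigl([0,T];L^{q_2}(\Om)_{\mathrm{weak}}\bigr)$ along the full sequence; in particular $g_h^\eta(t,\cdot)\weak g^\eta(t,\cdot)$ in $L^{q_2}(\Om)$ for every $t\in[0,T]$. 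I now move the spatial mollifier onto $g_h^\eta\phi$ by Fubini's theorem ($\omega_\delta$ being even):
\[
III_{h,\eta,\delta}=\iint_{\Dom} f_h(t,y)\,\bigl[\omega_\delta\ast_x\bigl(g_h^\eta(t,\cdot)\,\phi(t,\cdot)\bigr)\bigr](y)\,dy\,dt .
\]
For each $(t,y)$, $\bigl[\omega_\delta\ast_x(g_h^\eta(t,\cdot)\phi(t,\cdot))\bigr](y)=\int_\Om g_h^\eta(t,x)\,\phi(t,x)\,\omega_\delta(x-y)\,dx\to\int_\Om g^\eta(t,x)\,\phi(t,x)\,\omega_\delta(x-y)\,dx$ because $\phi(t,\cdot)\,\omega_\delta(\cdot-y)\in L^{q_1}(\Om)$, and these quantities are bounded by $C(\eta,\delta)$ uniformly in $h,t,y$; since $\Dom$ is bounded, dominated convergence upgrades this to $\omega_\delta\ast_x(g_h^\eta\phi)\to\omega_\delta\ast_x(g^\eta\phi)$ strongly in $L^r(\Dom)$ for every $r<\infty$, in particular in $L^{p_2}(0,T;L^{q_2}(\Om))=L^{p_1}(0,T;L^{q_1}(\Om))^\star$. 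Since $f_h\weak f$ in $L^{p_1}(0,T;L^{q_1}(\Om))$, the weak--strong pairing and Fubini once more give $III_{h,\eta,\delta}\toh\iint f\,[\omega_\delta\ast_x(g^\eta\phi)]=\iint g^\eta f^\delta\phi$, with $f^\delta:=\omega_\delta\ast_x f$.

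Finally one assembles the estimates with iterated limits: for fixed $\delta,\eta$, $\lim_{h\to0}III_{h,\eta,\delta}=\iint g^\eta f^\delta\phi$, which tends to $\iint g f^\delta\phi$ as $\eta\to0$ (mollification, $f^\delta\phi\in L^{p_1}(L^{q_1})$) and then to $\iint gf\phi$ as $\delta\to0$. Choosing first $\delta$ small (to control $\abs{I_{h,\delta}}$ uniformly in $h$ and $\abs{\iint g(f^\delta-f)\phi}$), then $\eta$ small depending on $\delta$ (to control $\limsup_h\abs{II_{h,\eta,\delta}}$ and $\abs{\iint(g^\eta-g)f^\delta\phi}$), and finally letting $h\to0$, we obtain $\iint g_h f_h\phi\toh\iint gf\phi$, that is, the claim. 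I expect the principal difficulty to be term $III$: neither $g_h^\eta$ nor $f_h^\delta$ converges strongly in the relevant Bochner space, and the argument works only because the two mollifications let one transfer the spatial smoothing onto $g_h^\eta\phi$, whose pointwise-in-time weak continuity (supplied by the Arzel\`a-Ascoli Lemma~\ref{lem:timecompactness}) becomes, after smoothing, strong $L^r$ convergence, permitting the pairing against the merely weakly convergent $f_h$. Getting the order of the limits right ($h\to0$, then $\eta\to0$, then $\delta\to0$) is essential, and matching the Lebesgue exponents in~(2)--(4) against their conjugates in terms $I$ and $II$ requires a little care (it is completely transparent in the typical application, where $p_1=p_2=q_1=q_2=2$).
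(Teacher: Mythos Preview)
Your approach is sound and genuinely different from the paper's. The paper does not attempt a self-contained argument: it replaces $g_h$ by its piecewise-linear-in-time interpolant $\widetilde g_h$, so that $\partial_t\widetilde g_h\in_b L^1(0,T;W^{-1,1}(\Om))$, and then invokes Lions' continuous lemma \cite[Lemma~5.1]{Lions:1998ga} as a black box for the product $\widetilde g_h f_h$; the remainder $(g_h-\widetilde g_h)f_h$ is dealt with by spatially mollifying $f_h$ and using an inverse inequality in the time variable (this is where the paper exploits property~(1) for $f_h$, which you do not use). Your double-mollification scheme, with the Arzel\`a--Ascoli step for term~$III$, effectively \emph{reproves} Lions' lemma in this discrete setting; the manoeuvre of transferring $\omega_\delta$ onto $g_h^\eta\phi$ so that pointwise-in-time weak convergence becomes strong $L^r$ convergence is exactly the heart of that argument. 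What you gain is self-containment; what the paper gains is brevity.

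Two remarks on the exponent bookkeeping you flag at the end. In term~$I$ you estimate $\norm{f_h-f_h^\delta}$ in $L^{p_1}(L^{q_1})$, but hypothesis~(4) as written controls this quantity in $L^{p_2}(L^{q_2})$; H\"older then asks for $g_h\in_b L^{p_1}(L^{q_1})$, which is not among the assumptions (the paper's own proof has the same slip at the step $(g_h-\widetilde g_h)(f_h-f_h^\epsilon)\to 0$). In term~$II$, interpolation between $L^1(W^{-1,1})$ and $L^{p_2}(W^{-1,1})$ cannot reach the endpoint $L^{p_2}$; the conclusion you want is nevertheless true, because hypothesis~(3) makes $t\mapsto g_h(t,\cdot)$ of uniformly bounded variation with values in $W^{-1,1}(\Om)$, hence (combined with the $L^{p_2}$ bound) uniformly bounded in $L^\infty(0,T;W^{-1,1}(\Om))$, and then $\norm{g_h-g_h^\eta}_{L^{p_2}(W^{-1,1})}^{p_2}\le C\norm{g_h-g_h^\eta}_{L^\infty(W^{-1,1})}^{p_2-1}\norm{g_h-g_h^\eta}_{L^1(W^{-1,1})}\le C\eta$. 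Both issues evaporate when $p_1=p_2=q_1=q_2=2$, which is the only case actually used in the paper.
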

\begin{proof}
Let us introduce an auxiliary piecewise linear 
function $\widetilde{g}_{h}$ by setting
$$
\widetilde{g}_{h}(t,\cdot) = g_{h}(t_k)
+h^{-1}(t-t_k)\left(g_{h}(t_{k+1}) - g_{h}(t_k)\right), 
\quad t \in (t_k, t_{k+1}],
$$
for $k=0,\ldots,M-1$. Using property (3),
\begin{equation}\label{eq:eqeq}
	\begin{split}
		\left|\int_0^T\int_\Om (\widetilde{g}_h - g_h)\phi \ dxdt\right|
		& \leq	h\left|\int_{0}^T\int_{\Om} 
		\left(\frac{g_{h}(t,x) - g_{h}(t-h,x)}{h}\right) \phi \ dxdt\right| \\
		&\leq Ch \|\phi\|_{L^\infty(0,T;W^{1,\infty}(\Om))}, \qquad
		\phi \in C_0^\infty(\Om). 
	\end{split}
\end{equation}
Thus, $(\widetilde{g}_h - g_h) \weak 0$ as in the sense 
of distributions on $\Dom$ as $h \rightarrow 0$. 

Next, we write
\begin{equation*}
	g_{h}f_{h} 
	=\widetilde{g}_{h}f_{h} 
	+ (g_{h}- \widetilde{g}_{h})f_{h}.
\end{equation*}
By requirement (3), $\partial_{t}\widetilde{g}_{h} \in_{b} L^1(0,T;W^{-1,1}(\Om))$. 
This and requirement (4) allow us to apply a lemma due 
to Lions \cite[Lemma 5.1]{Lions:1998ga}, yielding
$$
f_{h}\widetilde{g}_{h} \weak fg, 
$$
in the sense of distributions on $\Dom$ as $h \rightarrow 0$. 

It only remains to prove that $(g_{h}- \widetilde{g}_{h})f_{h} \weak 0$ 
in the sense of distributions. For this purpose, set 
$f_{h}^\epsilon = f_{h}\star \kappa_{\epsilon}$, where
$\kappa_{\epsilon}$ is a standard smoothing 
kernel and $\star$ denotes the convolution product. 
We write
$$
(g_{h}-\widetilde{g}_{h})f_{h} = 
(g_{h}-\widetilde{g}_{h})f_{h}^\epsilon 
+ (g_{h}-\widetilde{g}_{h})(f_{h} - f_{h}^\epsilon).
$$
Now, requirement (4) yields
$$
\|f_{h}-f_{h}^\epsilon\|_{L^{p_{2}}(0,T;L^{q_{2}}(\Om))} \rightarrow 0 
\quad \text{as $\epsilon \rightarrow 0$},
$$
uniformly in $h$, and hence 
\begin{equation*}
	\begin{split}
			&\lim_{\epsilon \rightarrow 0}\lim_{h \rightarrow 0} 
			\int_0^T\int_\Om (g_{h}-\widetilde{g}_{h})(f_{h}-f_{h}^\epsilon)\phi\ dxdt =0.
	\end{split}
\end{equation*}
Thus, the proof is complete provided that 
\begin{equation*}
	\lim_{\epsilon \rightarrow 0}
	\lim_{h \rightarrow 0}	\int_0^T\int_\Om
	(g_{h}-\widetilde{g}_{h})f_{h}^\epsilon \phi \ dxdt = 0.
\end{equation*}
By a calculation similar to \eqref{eq:eqeq} 
we see that
$$
\left|\int_{0}^T\int_{\Om} (g_{h}
-\widetilde{g}_{h})f_{h}^\epsilon\ dxdt\right| 
\leq h^\frac{p_{2} - 1}{p_{2}} C 
\|f_{h}^\epsilon\|_{L^{p_{2}}(0,T;W^{1,\infty}(\Om))},
$$
where we have also applied Lemma \ref{lemma:inverse} (below)
to the time variable.  From this we can conclude 
that $(g_{h}- \widetilde{g}_{h})f_{h}^\epsilon \weak 0$ 
in the sense of distributions as $h \rightarrow 0$. This brings 
the proof to an end.
\end{proof}

\subsection{Weak and renormalized solutions}

\begin{definition}[Weak solutions]\label{def:weak}
A pair of functions $(\varrho,\vc{u})$ constitutes a weak solution
of the semi-stationary compressible Stokes 
system \eqref{eq:contequation}--\eqref{eq:momentumeq} 
with initial data \eqref{initial} provided that:
\begin{enumerate}
	\item $(\varrho,\vc{u}) \in L^\infty(0,T;L^\gamma(\Omega))\times L^2(0,T;\vc{W}^{1,2}_0(\Om)),$
	\item $\partial_t\varrho + \Div (\varrho\vc{u}) = 0$ in the weak 
	sense, i.e, $\forall \phi \in C^\infty([0,T)\times\overline{\Omega})$,
	\begin{equation}\label{eq:weak-rho}
		\int_{0}^T\int_{\Omega}\varrho \left( \phi_{t} + \vc{u}D\phi\right)\ dxdt
		+ \int_{\Omega}\varrho_{0}\phi|_{t=0}\ dx = 0;
	\end{equation}
		
	\item $-\mu \Delta \vc{u} - \lambda D\Div\vc{u} + Dp(\varrho) = \vc{f}$ in 
	the weak sense, i.e, $\forall \vc{\phi} \in \vc{C}_0^\infty([0,T)\times\Omega)$,

	\begin{equation}\label{eq:weak-u}
		\int_{0}^T\int_{\Omega}\mu\Grad \vc{u} \Grad \vc{\phi} 
		+ \left[(\mu + \lambda\Div \vc{u}-p (\varrho)\right]\Div \vc{\phi}\ dxdt 
		= \int_{0}^T\int_{\Omega}\vc{f}\vc{\phi}\ dxdt.
	\end{equation}
\end{enumerate}
\end{definition}

For the convergence analysis we shall also need the DiPerna-Lions concept of 
renormalized solutions of the continuity equation.  

\begin{definition}[Renormalized solutions]
\label{renormlizeddef}
Given $\vc{u}\in L^2(0,T;\vc{W}^{1,2}_{0}(\Omega))$, we 
say that $\varrho\in  L^\infty(0,T;L^\gamma(\Omega))$ 
is a renormalized solution of \eqref{eq:contequation} provided 
$$
B(\vrho)_t + \Div \left(B(\vrho)\vc{u}\right) = b(\vrho)\Div \vc{u}
\quad \text{in the sense of distributions on $[0,T)\times \overline{\Om}$},
$$
for any $B\in C[0,\infty)\cap C^1(0,\infty)$ with $B(0)=0$ and $b(\vrho) := B'(\vrho)\vrho - B(\vrho)$.
\end{definition}

We shall need the following well-known lemma \cite{Lions:1998ga} stating that 
square-integrable weak solutions $\vrho$ are also renormalized solutions.

\begin{lemma}
\label{lemma:feireisl}
Suppose $(\varrho,\vc{u})$ is a weak solution according to Definition \ref{def:weak}.
If $\varrho \in L^2((0,T)\times \Omega))$, then $\vrho$ is a renormalized solution 
according to Definition \ref{renormlizeddef}.
\end{lemma}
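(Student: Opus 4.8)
The plan is to follow the classical DiPerna–Lions regularization argument adapted to a weak solution that is merely $L^2$ in space-time. First I would fix $B \in C[0,\infty) \cap C^1(0,\infty)$ with $B(0)=0$; since the renormalized identity is linear in $B$ and the class of admissible $B$ can be approximated, it suffices to treat $B$ with $B'$ bounded (so that $B(\vrho), b(\vrho) \in L^2((0,T)\times\Omega)$), and then pass to the general case by truncation. The key step is to mollify the continuity equation \eqref{eq:contequation} in the spatial variable: writing $\vrho^\delta = \vrho \star \kappa_\delta$ for a standard spatial mollifier $\kappa_\delta$, convolving \eqref{eq:weak-rho} (in its distributional form $\Pt\vrho + \Div(\vrho\vu)=0$) gives
\begin{equation*}
	\Pt \vrho^\delta + \Div(\vrho^\delta \vu) = r_\delta, \qquad r_\delta := \Div(\vrho^\delta \vu) - \Div\big((\vrho\vu)\star\kappa_\delta\big),
\end{equation*}
on $(0,T)\times\Omega'$ for any $\Omega' \Subset \Omega$. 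Because $\vrho^\delta$ is smooth in $x$ and the equation identifies $\Pt\vrho^\delta$ as an $L^2$-in-space function, the chain rule is now legitimate and yields $\Pt B(\vrho^\delta) + \Div(B(\vrho^\delta)\vu) = b(\vrho^\delta)\Div\vu + B'(\vrho^\delta) r_\delta$ in the sense of distributions.

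The main obstacle is showing $r_\delta \to 0$ in $L^1_{\mathrm{loc}}((0,T)\times\Omega)$ as $\delta \to 0$; this is precisely the Friedrichs commutator lemma of DiPerna–Lions. The argument splits $r_\delta = \big(\Div(\vrho^\delta\vu) - (\vu\cdot\nabla\vrho)\star\kappa_\delta - (\vrho\,\Div\vu)\star\kappa_\delta\big)$ and uses $\vu \in L^2(0,T;\vc{W}^{1,2}_0(\Omega))$ together with $\vrho \in L^\infty(0,T;L^\gamma)\cap L^2$: the commutator $[\vu\cdot\nabla, \star\kappa_\delta]\vrho$ is controlled by $\norm{\nabla\vu}_{L^2}$ and the strong $L^2$ continuity of translations of $\vrho$, giving $r_\delta \to 0$ in $L^1_t(L^1_{\mathrm{loc},x})$. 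One must be slightly careful that $\gamma$ could be less than $2$, but the hypothesis $\vrho\in L^2((0,T)\times\Omega)$ is exactly what makes the $L^2$-based commutator estimate work without needing any extra integrability; the $L^\infty_t L^\gamma_x$ bound then only enters to make $B(\vrho)$ well-defined and integrable.

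Finally, I would pass to the limit $\delta \to 0$: $B(\vrho^\delta) \to B(\vrho)$ and $b(\vrho^\delta) \to b(\vrho)$ in $L^1_{\mathrm{loc}}$ (using continuity of $B,b$ and, say, dominated convergence once $B'$ is truncated), $B(\vrho^\delta)\vu \to B(\vrho)\vu$ in $L^1_{\mathrm{loc}}$, and $b(\vrho^\delta)\Div\vu \to b(\vrho)\Div\vu$ in $L^1_{\mathrm{loc}}$, while $B'(\vrho^\delta) r_\delta \to 0$ since $B'$ is bounded and $r_\delta \to 0$ in $L^1_{\mathrm{loc}}$. This gives the renormalized identity on $(0,T)\times\Omega$ for bounded $B'$. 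To remove the localization and recover the identity up to $t=0$ on $[0,T)\times\overline\Omega$ with the correct initial trace, I would use that the Dirichlet condition $\vu \in \vc{W}^{1,2}_0$ allows the standard extension of $\vrho$ by a constant across $\partial\Omega$ (or a boundary-flattening argument as in Lions), and then test against $\phi \in C^\infty([0,T)\times\overline\Omega)$, the initial datum being inherited from the weak time-continuity of $\vrho$ implicit in \eqref{eq:weak-rho}. Lifting the restriction that $B'$ be bounded is done by approximating a general $B$ by $B_k$ with $B_k' = B'$ on $[0,k]$ and constant beyond, and passing $k\to\infty$ using the $L^\infty_t L^\gamma_x$ bound to dominate the tails.
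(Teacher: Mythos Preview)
Your proposal is correct and is precisely the standard DiPerna--Lions regularization argument. However, the paper does not actually prove this lemma: it is stated as a ``well-known lemma'' with a citation to Lions \cite{Lions:1998ga} and no proof is given. So there is nothing to compare against; your write-up supplies the proof that the paper outsources to the literature, and the steps you outline (spatial mollification, Friedrichs commutator estimate using $\vrho\in L^2$ and $\nabla\vu\in L^2$, truncation of $B'$, extension across $\partial\Omega$ via $\vu\in\vc{W}^{1,2}_0$) are exactly the ingredients of the argument in \cite{Lions:1998ga} and \cite{Feireisl:2004oe}.
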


\begin{remark}
Regarding the continuity equation and the definitions of weak and renormalized solutions, 
we are requiring the equation to hold up to the boundary.  
\end{remark}

\subsection{On the equation {\rm div} v $=f$ }
Solutions to the following problem are vital 
to the upcoming convergence analysis:
\begin{equation}\label{eq:divf}
	\Div \vc{v} = f \quad \text{in $\Om$}, 
	\qquad \vc{v} = 0 \quad \text{on $\partial \Om$.}
\end{equation}
If $f \in L^p(\Om)$ with $\int_\Om f~dx = 0$, then a solution  to \eqref{eq:divf} can be constructed
through the Hodge decomposition
\begin{equation*}
	\vc{v} = \Grad s + \Curl \xi,
\end{equation*}
where $s \in \vc{H}^2(\Om)$ solves the Neumann Laplace problem, i.e.,
$$
\Delta s = f \quad \text{in $\Om$}, \qquad
\Grad s \cdot \nu = 0 \quad \text{on $\partial \Om$,}
$$ 
and $\xi \in \vc{H}^2(\Om)$ is determined such that 
$\vc{v}|_{\partial \Om} = 0$ (cf. \cite{Arnold}). 
Such a solution can be constructed using the Bogovskii 
solution operator \cite{Feireisl:2004oe}. Here, we define 
the solution operator $\Bop{\cdot}: L^p_0(\Om) \rightarrow \vc{W}^{1,p}_0(\Om)$ 
as one of the solutions to the problem
\begin{equation}\label{def:bop}
	\Div \Bop{\phi} = \phi\quad \text{in $\Om$},\qquad 
	\Bop{\phi} = 0 \quad \text{on $\partial \Om$.}
\end{equation}

We shall need solutions $\vc{v}$ satisfying $\Curl \vc{v} = 0$. 
Clearly, this is not compatible with the Dirichlet 
boundary condition. However, locally $\Curl$ free solutions 
can be constructed using the operator
$\Aop{\cdot}:L^p(\Om) \rightarrow \vc{W}^{1,p}(\Om)$,
\begin{equation}\label{def:aop}
	\Aop{\phi} = \Grad \Delta^{-1}\left[\phi\right],
\end{equation}
where $\Delta^{-1}$ is the inverse Neumann Laplace operator.

\subsection{Finite element spaces and some basic properties} \label{sec:CRM}

Let $E_h$ denote a shape regular tetrahedral mesh of $\Om$. Let 
$\Gamma_h^I=\Set{\Gamma \in \Gamma_h: \Gamma \not \subset \pOm}$ denote 
the set of internal faces in $E_h$. We will approximate the density 
in the space of piecewise constants on $E_{h}$ and 
denote this space by $Q_{h}(\Om)$. For the approximation of the velocity we 
use the Crouzeix--Raviart element space \cite{Crouzeix:1973qy}:
\begin{equation}\label{def:CR}
	\vc{V}_{h}(\Om) = \Set{\vc{v}_{h}\in L^2(\Om): 
	\vc{v}_{h}|_{E} \in \mathbb{P}_{1}^N(E), \ 
	\forall E \in E_{h},\ 
	\int_{\Gamma}\jump{\vc{v}_{h}}_\Gamma\ dS(x) = 0, \ 
	\forall \Gamma \in \Gamma^I_{h}},
\end{equation}
where $\jump{\cdot}_\Gamma$ denotes the jump across a face $\Gamma$.
To incorporate the boundary condition, we let the 
degrees of freedom of $\vc{V}_{h}(\Om)$ vanish at the boundary.
Consequently, the finite element method is nonconforming 
in the sense that the velocity approximation 
space is not a subspace of the corresponding 
continuous space, $\vc{W}_0^{1,2}(\Om)$.

We introduce the canonical interpolation operators
$$
\Pi_h^V: \vc{W}^{1,2}_0(\Om) \rightarrow \vc{V}_h(\Om), 
\qquad \Pi_h^Q: L^2(\Om) \rightarrow Q_h(\Om),
$$
defined by
\begin{equation}\label{eq:opdef}
	\begin{split}
		\int_\Gamma \Pi_h^V \vc{v}_h \ dS(x) & 
		= \int_\Gamma \vc{v}_h \ dS(x), 
		\quad\forall \Gamma \in \Gamma_h, \\
		\int_E \Pi_h^Q \phi~dx &= \int_E \phi \ dx, 
		\quad \forall E \in E_h.
	\end{split}
\end{equation}
Then, by virtue of \eqref{eq:opdef} and Stokes' theorem,
\begin{equation}\label{eq:commute}
	\Div_{h} \Pi_{h}^V\vc{v} = \Pi_{h}^Q \Div \vc{v},
	\qquad  
	\Curl_{h} \Pi_{h}^V\vc{v} = \Pi_{h}^Q \Curl \vc{v}, 
\end{equation}
for all $\vc{v} \in \vc{W}_0^{1,2}(\Om)$. Here, $\Curl_h$ and $\Div_h$ denote 
the curl and divergence operators, respectively, taken inside each element. 

Now, \eqref{eq:commute} immediately gives
\begin{equation*}
	\Div_h \Pi_h^V \Bop{q_h} = q_h, \qquad 
	\forall q_h \in Q_h(\Om)\cap \Set{\int_\Om q_h\ dx = 0},
\end{equation*}
and, away from the boundary,
\begin{equation*}
	\Div_h \Pi_h^V \Aop{q_h} = q_h, \qquad 
	\Curl_h \Pi_h^V \Aop{q_h} = 0, \qquad 
	\forall q_h \in Q_h(\Om),
\end{equation*}
where $\Bop{\cdot}$ and $\Aop{\cdot}$ are defined in \eqref{def:bop} and \eqref{def:aop}, respectively.
Consequently, this configuration of elements enables us to construct 
discrete analogs of the continuous operators \eqref{def:bop} and \eqref{def:aop}.

We associate to the space $\vc{V}_h(\Om)$ the following semi--norm and norm:
\begin{equation}\label{eq:norm}
	\begin{split}
		|\vc{v}_{h}|^2_{\vc{V}_{h}} & = 
		\|\Curl_{h} \vc{v}_{h}\|_{L^2(\Om)}^2 
		+\|\Div_{h} \vc{v}_{h}\|_{L^2(\Om)}^2  \\
		&\qquad + \sum_{\Gamma \in \Gamma_{h}}h^{\epsilon-1}
		\left(\|\sjump{\vc{v}_{h}\cdot \nu}\|_{L^2(\Gamma)}^2
		+\|\sjump{\vc{v}_{h}\times \nu}\|_{L^2(\Gamma)}^2\right),
		\\ \|\vc{v}_{h}\|_{\vc{V}_{h}}^2 & 
		=\|\vc{v}_{h}\|_{L^2(\Om)}^2 + |\vc{v}_{h}|^2_{\vc{V}_{h}}.
	\end{split}
\end{equation}

Let us now state some basic properties of the finite element spaces. 
We start by recalling from \cite{Brezzi:1991lr,Crouzeix:1973qy} 
a few interpolation error estimates. 	

\begin{lemma}\label{lemma:interpolationerror} 
There exists a constant $C>0$, depending only on the 
shape regularity of $E_h$ and $|\Om|$, such 
that for any $1\leq p<\infty$,
\begin{equation*}
	\begin{split}
		&\|\Pi_h^Q \phi-\phi \|_{L^p(\Om)} 
		\leq Ch\|\Grad \phi\|_{L^p(\Om)}, \\
		&\|\Pi_{h}^V \vc{v}-\vc{v}\|_{\vc{L}^p(\Om)} 
		+h\|\Grad_h (\Pi_{h}^V\vc{v}-\vc{v})\|_{\vc{L}^p(\Om)} 
		\leq ch^s\|\Grad^s\vc{v}\|_{\vc{L}^p(E)},\quad s = 1,2,
	\end{split}
\end{equation*}
for all $\phi \in W^{1,p}(\Om)$ and $\vc{v} \in \vc{W}^{s,p}(E)$.
Here, $\Grad_h$ is the gradient operator taken inside 
each element. 
\end{lemma}

By scaling arguments, the trace theorem, and 
the Poincar\'e inequality, we obtain

\begin{lemma}\label{lemma:toolbox}
For any $E \in E_{h}$ and $\phi \in W^{1,2}(E)$, 
we have the following inequalities:
\begin{enumerate}
\item{}trace inequality,
$$
\|\phi\|_{L^2(\Gamma)} \leq 
ch_{E}^{-\frac{1}{2}}\left(\|\phi\|_{L^2(E)}
+h_{E}\|\Grad \phi\|_{\vc{L}^2(E)}\right), 
\quad \forall 
\Gamma \in \Gamma_{h}\cap \partial E.
$$
\item{} Poincar\'e inequality,
$$
\left\|\phi - \frac{1}{|E|}\int_{E}\phi\ dx \right\|_{L^2(E)} 
\leq Ch_{E}\|\Grad \phi\|_{\vc{L}^2(E)}.
$$
\end{enumerate}
In both estimates, $h_E$ is the diameter of the element $E$.
\end{lemma}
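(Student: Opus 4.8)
The plan is to reduce both inequalities to the fixed reference simplex by an affine change of variables, invoke the classical trace theorem and Poincar\'e--Wirtinger inequality there, and then track the powers of $h_E$ produced by the Jacobians. Let $\hat E$ denote the reference unit simplex in $\R^N$, and for each $E \in E_h$ let $F_E(\hat x) = B_E\hat x + b_E$ be the invertible affine map with $F_E(\hat E) = E$. Shape regularity of $E_h$ furnishes the bounds $\|B_E\| \le Ch_E$, $\|B_E^{-1}\| \le Ch_E^{-1}$ and $ch_E^N \le |\det B_E| \le Ch_E^N$, with $C$ depending only on the shape regularity constant and $N$; moreover each face $\Gamma$ of $E$ is the image under $F_E$ of one of the finitely many faces $\hat\Gamma$ of $\hat E$, and the induced surface measures obey $dS(x) \le Ch_E^{N-1}\,dS(\hat x)$, with a matching lower bound.

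First I would record the two estimates on $\hat E$. For $\hat\phi \in W^{1,2}(\hat E)$, continuity of the trace operator $W^{1,2}(\hat E) \to L^2(\partial\hat E)$ gives $\|\hat\phi\|_{L^2(\hat\Gamma)} \le C\bigl(\|\hat\phi\|_{L^2(\hat E)} + \|\Grad\hat\phi\|_{L^2(\hat E)}\bigr)$ for each face $\hat\Gamma$, and the Poincar\'e--Wirtinger inequality gives $\bigl\|\hat\phi - \langle\hat\phi\rangle_{\hat E}\bigr\|_{L^2(\hat E)} \le C\|\Grad\hat\phi\|_{L^2(\hat E)}$, where $\langle\hat\phi\rangle_{\hat E}$ denotes the mean of $\hat\phi$ over $\hat E$. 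Both are standard, with constants depending only on $\hat E$; as usual one proves them first for $\hat\phi \in C^\infty(\overline{\hat E})$ and extends by density so that the trace is well defined.

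Next comes the transfer step. Writing $\hat\phi = \phi\circ F_E$, the chain rule yields $\Grad\hat\phi = B_E^\top(\Grad\phi)\circ F_E$, and the change of variables formula gives $\|\phi\|_{L^2(E)}^2 = |\det B_E|\,\|\hat\phi\|_{L^2(\hat E)}^2$, $\int_{\hat E}|\Grad\hat\phi|^2\,d\hat x \le \|B_E\|^2|\det B_E|^{-1}\|\Grad\phi\|_{L^2(E)}^2$, and $\|\phi\|_{L^2(\Gamma)}^2 \le Ch_E^{N-1}\|\hat\phi\|_{L^2(\hat\Gamma)}^2$. Substituting the Jacobian bounds into the reference trace inequality and taking square roots produces exactly $\|\phi\|_{L^2(\Gamma)} \le Ch_E^{-1/2}\bigl(\|\phi\|_{L^2(E)} + h_E\|\Grad\phi\|_{L^2(E)}\bigr)$. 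For the Poincar\'e inequality one observes that the mean value is invariant under the affine map, $\langle\hat\phi\rangle_{\hat E} = \frac{1}{|E|}\int_E\phi\,dx$, so that $\phi - \frac{1}{|E|}\int_E\phi\,dx$ pulls back to $\hat\phi - \langle\hat\phi\rangle_{\hat E}$; combining the reference inequality with $\bigl\|\phi - \frac{1}{|E|}\int_E\phi\,dx\bigr\|_{L^2(E)}^2 = |\det B_E|\,\bigl\|\hat\phi - \langle\hat\phi\rangle_{\hat E}\bigr\|_{L^2(\hat E)}^2$ and the Jacobian bounds gives $\bigl\|\phi - \frac{1}{|E|}\int_E\phi\,dx\bigr\|_{L^2(E)} \le Ch_E\|\Grad\phi\|_{L^2(E)}$.

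There is no real obstacle here; the whole argument is routine. The only point deserving care is the exact power of $h_E$ in the surface-measure transformation for the trace estimate, since it is precisely this that yields the $h_E^{-1/2}$ prefactor — shape regularity is what keeps this factor (and the volume Jacobian $|\det B_E| \sim h_E^N$) two-sided and uniform over the mesh, so that the constant $C$ in both inequalities is independent of $E$ and of $h$.
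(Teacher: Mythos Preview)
Your proof is correct and is precisely the standard scaling argument the paper alludes to: the paper itself gives no proof beyond the sentence ``By scaling arguments, the trace theorem, and the Poincar\'e inequality, we obtain'' preceding the statement. You have simply written out those details---pull back to the reference simplex, apply the reference inequalities, and track the Jacobian powers using shape regularity---so there is nothing to compare.
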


\begin{lemma}\label{lemma:inverse}
There exists a positive constant $C$, depending only on the shape 
regularity of $E_h$, such that for $1\leq q,p \leq \infty$ 
and $r= 0,1$,
\begin{equation*}
	\norm{\phi_h}_{W^{r,p}(E)} 
	\leq C h^{-r + \min\{0, \frac{N}{p}-\frac{N}{q}\}}
	\norm{\phi_h}_{L^q(E)}, 
\end{equation*}
for any $E \in E_h$ and all polynomial 
functions $\phi_h \in \mathbb{P}_k(E)$, $k=0,1,\ldots$.
\end{lemma}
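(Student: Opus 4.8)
The statement is the standard finite-element inverse estimate, so the plan is the classical scaling-plus-equivalence-of-norms argument. First I would fix a reference element $\widehat{E}$ (say the unit simplex) and recall that on $\widehat{E}$ the space $\mathbb{P}_k(\widehat{E})$ is finite-dimensional; hence all norms on it are equivalent. In particular there is a constant $\widehat{C}=\widehat{C}(k,N,p,q,r)$ such that
\begin{equation*}
	\norm{\widehat{\phi}}_{W^{r,p}(\widehat{E})} \le \widehat{C}\,\norm{\widehat{\phi}}_{L^q(\widehat{E})}, \qquad \forall \widehat{\phi}\in\mathbb{P}_k(\widehat{E}).
\end{equation*}
The only genuine content is bookkeeping of the powers of $h$ that appear when one transfers this inequality to a general element $E\in E_h$ via the affine map $F_E:\widehat{E}\to E$.

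Next I would carry out the change of variables. Writing $\phi_h = \widehat{\phi}\circ F_E^{-1}$ and using that $E_h$ is shape regular, so that $\abs{\det DF_E}\sim h_E^N$ and $\norm{DF_E}\sim h_E$, $\norm{DF_E^{-1}}\sim h_E^{-1}$ up to constants depending only on the shape regularity, one gets the two scaling relations
\begin{equation*}
	\abs{\phi_h}_{W^{r,p}(E)} \le C\,h_E^{\frac{N}{p}-r}\,\abs{\widehat{\phi}}_{W^{r,p}(\widehat{E})}, \qquad
	\norm{\widehat{\phi}}_{L^q(\widehat{E})} \le C\,h_E^{-\frac{N}{q}}\,\norm{\phi_h}_{L^q(E)},
\end{equation*}
with the obvious modifications (no $N/p$ or $N/q$ factor) when $p=\infty$ or $q=\infty$. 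Chaining these two bounds with the reference inequality above yields $\norm{\phi_h}_{W^{r,p}(E)} \le C\,h_E^{-r+\frac{N}{p}-\frac{N}{q}}\norm{\phi_h}_{L^q(E)}$. Summing the $W^{r,p}$ seminorms over $r'=0,\dots,r$ (each with its own power $h_E^{-r'+N/p-N/q}$) and using $h_E\le \mathrm{diam}(\Om)$ to absorb the non-dominant powers, one obtains a single power of $h_E$; since $r=0,1$ and $h_E\le C h$, this is $h^{-r+\min\{0,N/p-N/q\}}$ up to a constant. Here the $\min\{0,\cdot\}$ appears precisely because when $N/p-N/q\ge 0$ the corresponding power of $h_E$ is bounded, so one drops it, whereas when $N/p-N/q<0$ it must be kept; likewise one checks the $r=0$ versus $r=1$ terms and retains whichever power of $h_E$ is the smallest (most singular), which is $-r+\min\{0,N/p-N/q\}$ in all cases.

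There is no real obstacle here — the argument is entirely standard — but the one point requiring a little care is the case analysis in the exponent: one must verify that, after summing over $r'\le r$ and over the relevant combinations of $L^p$ versus $L^\infty$ norms, the claimed exponent $-r+\min\{0,\tfrac{N}{p}-\tfrac{N}{q}\}$ is indeed the worst (smallest) one, so that all other contributions are controlled by it via $h_E\lesssim h\lesssim\mathrm{diam}(\Om)$. The shape-regularity hypothesis enters only through the uniform bounds on $DF_E$ and $\det DF_E$, which is why the constant $C$ depends solely on the shape regularity of $E_h$ as asserted.
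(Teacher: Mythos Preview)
The paper states Lemma~\ref{lemma:inverse} without proof, treating it as a standard finite element inverse estimate. Your proposal is exactly the classical scaling-through-the-reference-element argument one finds in any finite element textbook, and it is correct; there is nothing to compare against in the paper itself.
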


\begin{lemma}\label{lemma:consistency}
Let $\{\vc{v}_{h}\}_{h>0}$ be a sequence in $\vc{V}_{h}(\Om)$. Assume 
that there is a constant $C >0$, independent of $h$, such that 
$\|\vc{v}_{h}\|_{\vc{V}_{h}} \leq C$. Then there exists a 
function $\vc{v} \in \vc{W}^{1,2}_0(\Om)$ such that, by 
passing to a subsequence as $h\to0$ if necessary,
$$
\text{$\vc{v}_{h} \weak \vc{v}$ in $\vc{L}^2(\Om)$}, \quad
\text{$\Curl_{h} \vc{v}_{h} \weak \Curl \vc{v}$ in $\vc{L}^2(\Om)$}, \quad 
\text{$\Div_{h}\vc{v}_{h} \weak \Div \vc{v}$ in $L^2(\Om)$}.
$$
\end{lemma}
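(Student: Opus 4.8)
The plan is to exploit the uniform bound on $\|\vc{v}_h\|_{\vc{V}_h}$ to extract weak limits of the three relevant quantities, and then to identify those limits by testing against smooth functions and comparing discrete integration-by-parts formulas with their continuous counterparts, the discrepancy being controlled by the jump terms in the norm. First I would observe that $\|\vc{v}_h\|_{L^2(\Om)}\le C$, $\|\Curl_h \vc{v}_h\|_{L^2(\Om)}\le C$ and $\|\Div_h \vc{v}_h\|_{L^2(\Om)}\le C$ all follow immediately from the definition \eqref{eq:norm} and the hypothesis. By the Banach--Alaoglu theorem (see the discussion after Lemma \ref{lem:prelim}) and a diagonal argument, we may pass to a subsequence so that $\vc{v}_h \weak \vc{v}$ in $\vc{L}^2(\Om)$, $\Curl_h \vc{v}_h \weak \vc{w}$ in $\vc{L}^2(\Om)$, and $\Div_h \vc{v}_h \weak z$ in $L^2(\Om)$, for some $\vc{v}, \vc{w}, z$. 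It then remains to show that $\vc{v} \in \vc{W}^{1,2}_0(\Om)$ with $\vc{w} = \Curl \vc{v}$ and $z = \Div \vc{v}$.

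The core of the argument is a consistency estimate. Fix $\vc\varphi \in \vc{C}_0^\infty(\Om)$. On each element $E$, integration by parts gives $\int_E \Div_h \vc{v}_h\, \varphi\, dx = -\int_E \vc{v}_h \cdot \Grad \varphi\, dx + \int_{\partial E} (\vc{v}_h\cdot \nu_E)\varphi\, dS$. Summing over $E \in E_h$, the interior face contributions combine into $\sum_{\Gamma\in\Gamma_h^I}\int_\Gamma \sjump{\vc{v}_h\cdot\nu}\varphi\, dS$ (boundary faces vanish since $\vc\varphi$ has compact support), so that
\begin{equation*}
	\int_\Om \Div_h \vc{v}_h\,\varphi\,dx + \int_\Om \vc{v}_h\cdot\Grad\varphi\,dx
	= \sum_{\Gamma\in\Gamma_h^I}\int_\Gamma \sjump{\vc{v}_h\cdot\nu}\,\varphi\,dS.
\end{equation*}
Because $\int_\Gamma \jump{\vc{v}_h}_\Gamma\,dS = 0$ by the definition \eqref{def:CR} of $\vc{V}_h(\Om)$, I may subtract the mean value of $\varphi$ over $\Gamma$ inside each face integral; then Cauchy--Schwarz, the approximation property $\|\varphi - \bar\varphi_\Gamma\|_{L^2(\Gamma)}\le C h\|\Grad\varphi\|_{L^\infty}$ (via scaling and the trace/Poincar\'e estimates of Lemma \ref{lemma:toolbox}), the bound $\sum_\Gamma h^{\epsilon-1}\|\sjump{\vc{v}_h\cdot\nu}\|_{L^2(\Gamma)}^2 \le C$, and a counting argument on the number of faces yield a bound of the form $C h^{(1-\epsilon)/2}\|\Grad\varphi\|_{L^\infty}\to 0$ as $h\to 0$. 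Passing to the limit in the displayed identity gives $\int_\Om z\,\varphi\,dx = -\int_\Om \vc{v}\cdot\Grad\varphi\,dx$ for all $\vc\varphi\in\vc{C}_0^\infty(\Om)$, i.e.\ $\Div \vc{v} = z$ in $\mathcal{D}'(\Om)$, hence $\vc{v}\in \vc{W}^{\Div,2}(\Om)$. An entirely parallel computation with the curl (using $\int_{\partial E}(\nu_E\times\vc{v}_h)\cdot\vc\psi$ and the jump term $\sjump{\vc{v}_h\times\nu}$) shows $\Curl\vc{v} = \vc{w}$.

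To upgrade this to $\vc{v}\in\vc{W}^{1,2}_0(\Om)$ I would use the characterization $\vc{W}^{1,2}_0(\Om) = \vc{W}^{\Curl,2}_0 \cap \vc{W}^{\Div,2}_0$ from \cite{Girault:1986fu} quoted in Section \ref{sec:prelim}; this requires checking the boundary conditions $\vc{v}\cdot\nu = 0$ and $\vc{v}\times\nu = 0$ on $\pOm$. These follow by repeating the integration-by-parts argument above but now with test functions $\vc\varphi$ that do \emph{not} vanish on $\pOm$: the boundary faces now contribute $\sum_{\Gamma\subset\pOm}\int_\Gamma(\vc{v}_h\cdot\nu)\varphi\,dS$, and since the degrees of freedom of $\vc{V}_h(\Om)$ vanish on $\pOm$ one has $\int_\Gamma \vc{v}_h\cdot\nu\,dS = 0$ for boundary faces $\Gamma$, so the same mean-subtraction trick sends this term to zero; comparing with Green's formula for the limit $\vc{v}$ forces the normal (and, analogously, tangential) trace to vanish. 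I expect the main obstacle to be precisely this boundary-trace bookkeeping together with making the $h^{(1-\epsilon)/2}$ consistency estimate rigorous uniformly — in particular, correctly handling the face mean-value subtraction and the shape-regularity-dependent face count — rather than anything conceptually deep; the weak-compactness part is routine.
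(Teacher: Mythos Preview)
Your proposal is correct and follows essentially the same route as the paper: extract weak limits from the uniform $\|\cdot\|_{\vc{V}_h}$ bound, then identify them via elementwise Green's formula, with the residual face sums controlled by subtracting a constant on each face (using $\int_\Gamma\jump{\vc{v}_h}_\Gamma\,dS=0$) and invoking the $h^{\epsilon-1}$-weighted jump bound together with the trace/Poincar\'e estimates of Lemma~\ref{lemma:toolbox}. The paper carries this out with test functions in $W^{1,2}_0(\Om)$ and the element-mean $\phi_\Gamma=\tfrac{1}{|E|}\int_E\phi$, obtaining the slightly sharper rate $Ch^{1-\epsilon/2}\|\Grad\phi\|_{L^2}$ rather than your $Ch^{(1-\epsilon)/2}\|\Grad\varphi\|_{L^\infty}$, but either suffices since $\epsilon\in(0,1)$. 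One point worth noting: the paper's proof simply declares the proof complete once $\xi=\Curl\vc{v}$ and $\zeta=\Div\vc{v}$ are identified distributionally, without explicitly verifying the boundary traces needed for $\vc{v}\in\vc{W}^{1,2}_0(\Om)$; your final paragraph, using the characterization $\vc{W}^{1,2}_0=\vc{W}^{\Curl,2}_0\cap\vc{W}^{\Div,2}_0$ and test functions not vanishing on $\partial\Om$ together with the vanishing boundary degrees of freedom, supplies that missing detail (this works because the seminorm \eqref{eq:norm} sums over all of $\Gamma_h$, so the boundary-face traces are controlled just like interior jumps).
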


\begin{proof}
As $\|\vc{v}_{h}\|_{\vc{V}_{h}}$ is bounded independently of $h$, it follows that 
$\vc{v}_{h} \in _{b} \vc{L}^2(\Om)$, $\Curl_{h} \vc{v}_{h} \in_{b} \vc{L}^2(\Om)$, 
and $\Div_{h} \vc{v}_{h} \in_{b} L^2(\Om)$. 
Thus, we have the existence of functions $\vc{v} \in \vc{L}^2(\Om)$, 
$\xi \in \vc{L}^2(\Om)$, and $\zeta \in L^2(\Om)$ such that, by passing 
to a subsequence if necessary,
$$
\vc{v}_{h} \weak \vc{v}, \quad \Curl_{h}\vc{v}_{h} \weak \xi, 
\quad \Div_{h} \vc{v}_{h} \weak \zeta.
$$
Once we make the identifications $\xi = \Curl \vc{v}$ and 
$\zeta = \Div \vc{v}$, the proof is complete.

Fix any $\phi \in W^{1,2}_{0}(\Om)$. 
An application of Green's theorem yields
\begin{align*}
	\int_{\Omega}\Curl_{h}\vc{v}_{h} \phi\ dx 
	& = \sum_{E \in E_{h}}\int_{E}\vc{v}_{h}\Curl \phi \ dx 
	+ \int_{\partial E}\phi(\vc{v}_{h}\times \nu)\ dS(x) \\
	& = \int_{\Omega}\vc{v}_{h}\Curl \phi\ dx 
	+ \sum_{\Gamma \in \Gamma^I_{h}}\int_{\Gamma}
	\phi \sjump{\vc{v}_{h} \times \nu}\ dS(x).
\end{align*}
By sending $h\rightarrow 0$ in the above identity, we discover
\begin{equation}\label{eq:curlconsistency}
	\int_{\Omega}\xi \phi \ dx = \int_{\Omega}\vc{v}\Curl \phi \ dx 
	+\lim_{h \rightarrow 0}\sum_{\Gamma \in \Gamma^I_{h}}
	\int_{\Gamma}\phi \sjump{\vc{v}_{h} \times \nu}\ dS(x).
\end{equation}

Utilizing the bound
\begin{equation}\label{eq:vazelin}
	h^{\epsilon-1}\sum_{\Gamma \in \Gamma^I_{h}}\int_{\Gamma}
	\sjump{\vc{v}_{h} \times \nu}^2
	\ dS(x)\leq C,
\end{equation}
cf.~\eqref{eq:norm}, and the second condition 
in \eqref{def:CR}, we control the last 
term of \eqref{eq:curlconsistency}:
\begin{align*}
	&\left|\sum_{\Gamma \in \Gamma^I_{h}}\int_{\Gamma}\phi 
	\sjump{\vc{v}_{h} \times \nu}\ dS(x)\right| 
	\\ & \quad 
	= \left|\sum_{\Gamma \in \Gamma^I_{h}}\int_{\Gamma}
	(\phi - \phi_{\Gamma}) \sjump{\vc{v}_{h} \times \nu}\ dS(x)\right| 
	\\ & \quad  
	\leq h^{-\frac{\epsilon}{2}}
	\left(\sum_{\Gamma \in \Gamma^I_{h}}h^{\epsilon-1}
	\int_{\Gamma}\sjump{\vc{v}_{h} \times \nu}^2  \right)^\frac{1}{2}
	\left(\sum_{\Gamma \in \Gamma^I_{h}}h\int_{\Gamma}
	|\phi - \phi_{\Gamma}|^2\ dS(x) \right)^\frac{1}{2},
\end{align*}
where $\{\phi_{\Gamma}\}_{\Gamma \in \Gamma_{h}}$ is a given set of real numbers. 
For each $\Gamma \in \Gamma_{h}$, let us take 
$\phi_\Gamma := \frac{1}{|E|}\int_{E}\phi \ dx$, where $E$ is arbitrarily 
fixed as one of the two elements sharing the edge $\Gamma$. 
Now, using Lemma \ref{lemma:toolbox} 
and \eqref{eq:vazelin}, we deduce
$$
\left|\sum_{\Gamma \in \Gamma^I_{h}}
\int_{\Gamma}\phi \sjump{\vc{v}_{h} \times \nu}\ dS(x)\right| 
\leq Ch^{-\frac{\epsilon}{2}}h\|\Grad \phi\|_{\vc{L}^2(\Omega)}.
$$
Hence,
$$
\lim_{h \rightarrow }\left|\sum_{\Gamma \in \Gamma^I_{h}}
\int_{\Gamma}\phi \sjump{\vc{v}_{h} \times \nu}\ dS(x)\right| =0.
$$
By \eqref{eq:curlconsistency}, this shows that
$$
\int_{\Omega}\xi \phi \ dx = \int_{\Omega}\vc{v}\Curl \phi \ dx,
$$
and so our claim follows, i.e., $\xi = \Curl \vc{v}$. 

By almost identical arguments we find that $\zeta = \Div \vc{v}$.
\end{proof}

The following lemma provides us with an 
estimate of the blow-up rate of $\Grad_{h}\vc{v}_{h}$, 
for any element $\vc{v}_h\in \vc{V}_h(\Om)$.

\begin{lemma}\label{lem:blowup}           
There exists a positive constant $C$, depending only on the shape 
regularity of $E_{h}$ and the size of $\Omega$, such that
\begin{equation*}
	\|\Grad_{h}\vc{v}_{h}\|_{\vc{L}^2(\Om)}^2 
	\leq Ch^{-1-\frac{\epsilon}{2}}\|\vc{v}_{h}\|_{\vc{L}^2(\Om)}
	\left(\sum_{\Gamma \in \Gamma^I_{h}}h^{\epsilon-1}
	\norm{\sjump{\vc{v}_{h}}}_{\vc{L}^2(\Gamma)}^2 \right)^\frac{1}{2},
\end{equation*}
for all $\vc{v}_h \in \vc{V}_h(\Om)$.
\end{lemma}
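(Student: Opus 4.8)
The plan is to bound $\|\Grad_h \vc{v}_h\|_{L^2(\Om)}^2$ element by element, using the fact that on each tetrahedron $E$ the function $\vc{v}_h$ is affine, so $\Grad_h \vc{v}_h$ is constant on $E$ and its size is governed by how much $\vc{v}_h$ varies across $E$ — which, for a Crouzeix--Raviart function, is controlled by the jumps on the faces of $E$. The first step is therefore a local inverse estimate: on a fixed reference element one has $\|\Grad \vc{v}\|_{L^2(\hat E)} \le C \|\vc{v} - \bar{\vc{v}}\|_{L^2(\hat E)}$ for affine $\vc{v}$ with mean $\bar{\vc{v}}$, and after scaling this reads $\|\Grad_h \vc{v}_h\|_{L^2(E)} \le C h_E^{-1}\|\vc{v}_h - \bar{\vc{v}}_h^E\|_{L^2(E)}$, where $\bar{\vc v}_h^E$ is the average of $\vc v_h$ over $E$. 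This is essentially Lemma~\ref{lemma:inverse} with $r=1$, $p=q=2$.

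The second step is to replace $\|\vc{v}_h - \bar{\vc v}_h^E\|_{L^2(E)}$ by something involving the jumps. Here I would use that, because the mean values of $\vc{v}_h$ over interior faces are single-valued, the piecewise-constant field whose value on $E$ is $\bar{\vc v}_h^E$ differs from the similarly defined quantity on a neighbouring element $E'$ only by (a face-average of) the jump $\sjump{\vc v_h}$ across the shared face $\Gamma$. A discrete Poincaré-type argument — or, more simply, writing $\vc v_h|_E - \bar{\vc v}_h^E$ on a reference element in terms of the three face-average increments and scaling back — yields
\begin{equation*}
	\|\vc v_h - \bar{\vc v}_h^E\|_{L^2(E)}^2 \le C h_E \sum_{\Gamma \subset \partial E} \|\sjump{\vc v_h}\|_{L^2(\Gamma)}^2,
\end{equation*}
the boundary faces contributing through the vanishing of the degrees of freedom there. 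Summing over $E$ and using shape regularity (each face belongs to at most two elements, and $h_E \simeq h$) gives
\begin{equation*}
	\|\Grad_h \vc v_h\|_{L^2(\Om)}^2 \le C h^{-2} \cdot h \sum_{\Gamma \in \Gamma_h} \|\sjump{\vc v_h}\|_{L^2(\Gamma)}^2 = C h^{-1} \sum_{\Gamma \in \Gamma_h} \|\sjump{\vc v_h}\|_{L^2(\Gamma)}^2.
\end{equation*}

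The last step is to recast this in the form stated, which carries the extra factor $h^{-\epsilon/2}\|\vc v_h\|_{L^2(\Om)} \big(\sum_\Gamma h^{\epsilon-1}\|\sjump{\vc v_h}\|^2\big)^{1/2}$ rather than $\sum_\Gamma \|\sjump{\vc v_h}\|^2$. One writes $\sum_\Gamma \|\sjump{\vc v_h}\|_{L^2(\Gamma)}^2 = h^{1-\epsilon}\sum_\Gamma h^{\epsilon-1}\|\sjump{\vc v_h}\|_{L^2(\Gamma)}^2$, and then estimates one factor of the jump sum crudely by the $L^2(\Om)$ norm via a trace/inverse inequality ($\sum_\Gamma \|\sjump{\vc v_h}\|_{L^2(\Gamma)}^2 \le C h^{-1}\|\vc v_h\|_{L^2(\Om)}^2$, from Lemma~\ref{lemma:toolbox}(1) applied on each element) and keeps the other factor weighted; combining $h^{-1}\cdot h^{1-\epsilon}$-type powers and taking square roots reproduces the exponent $-1-\epsilon/2$ and the product structure on the right. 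The main obstacle is the second step — getting the clean bound of $\|\vc v_h - \bar{\vc v}_h^E\|_{L^2(E)}$ by the face jumps with the correct power of $h$ and without a constant that degenerates under mesh refinement; this is where shape regularity and the precise Crouzeix--Raviart degrees of freedom (mean values on faces, vanishing on $\pOm$) must be used carefully, whereas the first and third steps are routine scaling.
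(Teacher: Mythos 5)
The decisive flaw is the local estimate in your second step. The bound
$\|\vc v_h - \bar{\vc v}_h^E\|_{L^2(E)}^2 \le C h_E \sum_{\Gamma\subset\partial E}\|\sjump{\vc v_h}\|_{L^2(\Gamma)}^2$
cannot hold: take $\vc v_h$ to be (near $E$) a \emph{continuous} piecewise affine function, e.g.\ any nonzero element of the conforming $\mathbb{P}_1$ subspace of $\vc V_h(\Om)$ with zero boundary values. Then every interelement jump vanishes, so the right-hand side is zero, while $\vc v_h-\bar{\vc v}_h^E$ is a nonconstant affine function on $E$. The heuristic conflates two different objects: the differences between the face averages of $\vc v_h|_E$ over the faces of a \emph{single} element (these are differences of the Crouzeix--Raviart degrees of freedom of $\vc v_h|_E$ and do determine $\Grad \vc v_h|_E$), and the jumps $\sjump{\vc v_h}$, which compare the traces from the two sides of one face and say nothing about how $\vc v_h$ varies inside $E$. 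Consequently the global bound of your step 3, $\|\Grad_h\vc v_h\|^2_{\vc L^2(\Om)}\le Ch^{-1}\sum_\Gamma\|\sjump{\vc v_h}\|_{\vc L^2(\Gamma)}^2$, fails for the same example; it is in fact strictly \emph{stronger} than the stated lemma. The factor $\|\vc v_h\|_{\vc L^2(\Om)}$ on the right-hand side of the statement is therefore not a cosmetic rewriting to be recovered afterwards by a crude trace bound (your step 4): one factor of the jump sum genuinely must be traded for $\vc v_h$ itself, because the jumps alone cannot control the broken gradient.

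The paper's proof is organized precisely so that $\vc v_h$, and not a second copy of the jump, survives as one of the two factors. Since $\vc v_h|_E$ is affine it is harmonic on each element, so Green's theorem gives $\|\Grad_h\vc v_h\|^2_{\vc L^2(\Om)}=\sum_{E}\int_{\partial E}(\Grad\vc v_h\cdot\nu)\,\vc v_h\,dS(x)$. Regrouping by faces and using that the mean of $\vc v_h$ over each face is single-valued (and vanishes on boundary faces), together with the fact that $\Grad\vc v_h\cdot\nu$ is constant on each face, reduces this to face integrals of the form $\int_\Gamma\bigl(\Grad\sjump{\vc v_h}\cdot\nu\bigr)\vc v_h\,dS(x)$. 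The paper then estimates $|\Grad\sjump{\vc v_h}|$ by $Ch^{-1}$ times an $L^2(\Gamma)$-normalized $\|\sjump{\vc v_h}\|_{\vc L^2(\Gamma)}$, using that $\Grad\sjump{\vc v_h}$ is constant and that the jump has a zero on $\Gamma$, bounds $\|\vc v_h\|_{\vc L^2(\Gamma)}\le Ch^{-1/2}\|\vc v_h\|_{\vc L^2(E)}$ by the trace and inverse inequalities (Lemmas \ref{lemma:toolbox} and \ref{lemma:inverse}), and finally applies Cauchy--Schwarz over the faces with the weights split as $h^{-(1+\epsilon)/2}\cdot h^{(\epsilon-1)/2}$; this is what produces the product structure and the exponent $-1-\epsilon/2$. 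To repair your argument you would have to abandon the purely jump-based intermediate bound and instead carry one factor of $\vc v_h$ (or of $\Grad_h\vc v_h$, to be absorbed into the left-hand side) through the estimate, essentially reproducing the integration-by-parts route above.
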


\begin{proof}
By the linearity of $\vc{v}_h|_E$, $\Delta \vc{v}_{h}|_E = 0$ 
$\forall E \in E_{h}$. Using this we can apply Green's 
theorem to deduce the bound
\begin{align*}
	\|\Grad_{h}\vc{v}_{h}\|_{\vc{L}^2(\Om)}^2
	&=\sum_{E\in E_h}\int_E \Grad_{h}\vc{v}_{h}\cdot\Grad_{h}\vc{v}_{h}\ dx
	= \sum_{E \in E_{h}}\int_{\partial E}(\Grad\vc{v}_{h} \cdot \nu)\vc{v}_{h}\ dS(x) 
	\\ & 
	= \sum_{\Gamma \in \Gamma^I_{h}}\int_{\Gamma}
	\left(\Grad \sjump{\vc{v}_{h}}\cdot \nu\right) \vc{v}_{h} \ dS(x) 
	\\ & 
	\leq \sum_{\Gamma \in \Gamma^I_{h}}\int_{\Gamma}\left|\Grad\jump{\vc{v}_{h}}_{\Gamma} 
	\cdot \nu\right||\vc{v}_{h}| \ dS(x) =: I.
\end{align*}
To obtain the third equality we have used that the average of $\vc{v}_h$ is continuous 
across internal faces. 
Since $\vc{v}_h \in \vc{V}_h(\Om)$, we know that $\Grad \jump{\vc{v}_h}_\Gamma$ is 
constant for all internal faces $\Gamma \in \Gamma_h$. Moreover, 
there must exist a point $b_\Gamma \in \Gamma$, for every $\Gamma \in \Gamma_h^I$,
such that $\jump{\vc{v}_h(b_\Gamma)}_\Gamma = 0$. 
By this and the Cauchy-Schwartz inequality, we deduce
\begin{equation*}
	\begin{split}
	I &\leq 
		 C\sum_{\Gamma \in \Gamma^I_{h}}\frac{1}{h}
		\|\vc{v}_{h}\|_{\vc{L}^2(\Gamma)}
		\|\sjump{\vc{v}_{h}}\|_{\vc{L}^2(\Gamma)}  \\
		&\leq Ch^{-1-\frac{\epsilon}{2}}
		\|\vc{v}_{h}\|_{\vc{L}^2(\Om)}
		\left(\sum_{\Gamma \in \Gamma^I_{h}}h^{\epsilon-1}
		\|\sjump{\vc{v}_{h}}\|_{\vc{L}^2(\Gamma)}^2 \right)^\frac{1}{2}.	
\end{split}
\end{equation*}
The last inequality is achieved thanks to the trace inequality
(1) in Lemma \ref{lemma:toolbox}, together with Lemma \ref{lemma:inverse}.
\end{proof}

Using the previous lemma, we can now establish a Poincar\'e inequality and 
a spatial compactness estimate.

\begin{lemma}\label{lemma:compactembedding}
There exists a positive constant $C$, depending only on 
the shape regularity of $E_{h}$ and the size of $\Omega$, 
such that for any $\xi \in \mathbb{R}^2$
\begin{equation}\label{eq:compact}
	\|\vc{v}_{h}(\cdot) - \vc{v}_{h}(\cdot-\xi)\|_{\vc{L}^2(\Om_\xi)} 
	\leq C|\xi|^{\frac{1}{2}-\frac{\epsilon}{4}}|\vc{v}_{h}|_{\vc{V}_{h}(\Om)}, 
	\quad \forall \vc{v}_{h} \in \vc{V}_{h}(\Om),
\end{equation}
where $\Om_\xi = \Set{x \in \Om: \dist(x, \partial \Om)>\xi}$. Moreover,
\begin{equation}\label{eq:poincare}
	\|\vc{v}_{h}\|_{\vc{L}^2(\Om)} \leq 
	C|\vc{v}_{h}|_{\vc{V}_{h}(\Om)}, 
	\quad \forall \vc{v}_{h} \in \vc{V}_{h}(\Om).
\end{equation}
\end{lemma}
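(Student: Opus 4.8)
The plan is to establish both inequalities of Lemma~\ref{lemma:compactembedding} by comparing $\vc{v}_h$ with its canonical interpolant onto a ``nicer'' space and then using Lemma~\ref{lem:blowup} together with the jump control built into $|\cdot|_{\vc{V}_h}$. More concretely, the key observation is that although $\vc{v}_h$ is discontinuous across faces, the quantity $\sum_{\Gamma}h^{\epsilon-1}\norm{\sjump{\vc{v}_h}}_{\vc{L}^2(\Gamma)}^2$ that appears in $|\vc{v}_h|_{\vc{V}_h}^2$ (via the normal and tangential jump components) quantifies \emph{how far} $\vc{v}_h$ is from a conforming function; combining this with the blow-up estimate for $\Grad_h\vc{v}_h$ gives a quantitative translation (fractional Sobolev-type) bound.

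For the translation estimate \eqref{eq:compact}, first I would recall the standard argument for piecewise-$H^1$ functions on interior subdomains: writing $\vc{v}_h(x)-\vc{v}_h(x-\xi)$ as a line integral of $\Grad_h\vc{v}_h$ wherever the segment from $x-\xi$ to $x$ stays within a single element, one gets a contribution controlled by $|\xi|\,\norm{\Grad_h\vc{v}_h}_{\vc{L}^2(\Om_\xi)}$; the segments that cross element faces contribute the face jumps $\sjump{\vc{v}_h}$. Carefully bookkeeping these, one obtains a bound of the form
\begin{equation*}
	\norm{\vc{v}_h(\cdot)-\vc{v}_h(\cdot-\xi)}_{\vc{L}^2(\Om_\xi)}^2
	\leq C|\xi|^2\norm{\Grad_h\vc{v}_h}_{\vc{L}^2(\Om)}^2
	+ C\frac{|\xi|}{h}\sum_{\Gamma\in\Gamma_h^I}\norm{\sjump{\vc{v}_h}}_{\vc{L}^2(\Gamma)}^2,
\end{equation*}
since each point in $\Om_\xi$ sees $O(|\xi|/h)$ faces along its translation segment and each face has measure $O(h^{N-1})$. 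Now I would insert Lemma~\ref{lem:blowup} to bound $\norm{\Grad_h\vc{v}_h}_{\vc{L}^2(\Om)}^2$ by $Ch^{-1-\epsilon/2}\norm{\vc{v}_h}_{\vc{L}^2(\Om)}\big(\sum_\Gamma h^{\epsilon-1}\norm{\sjump{\vc{v}_h}}_{\vc{L}^2(\Gamma)}^2\big)^{1/2}$, and rewrite the face sum as $h^{1-\epsilon}\sum_\Gamma h^{\epsilon-1}\norm{\sjump{\vc{v}_h}}^2_{\vc{L}^2(\Gamma)}$. Both terms are then $\leq C\,|\xi|^{?}\,h^{?}\,|\vc{v}_h|_{\vc{V}_h}^2$ for suitable exponents; optimizing (or simply checking that for $|\xi|\geq h$ the face term dominates and for $|\xi|<h$ one uses the smaller of the two) yields the exponent $|\xi|^{1-\epsilon/2}$ on the square, i.e.\ $|\xi|^{1/2-\epsilon/4}$ on the norm. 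A clean way to avoid case analysis is to note $\norm{\vc{v}_h}_{\vc{L}^2(\Om)}$ will be controlled by $|\vc{v}_h|_{\vc{V}_h}$ once \eqref{eq:poincare} is proved, so these should be handled together.

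For the Poincar\'e inequality \eqref{eq:poincare}, the natural route is by contradiction and compactness, but since we are on $h$-dependent spaces a direct argument is cleaner: let $\Pi_h^V$ act on a genuine $\vc{W}^{1,2}_0$ representative is not available here, so instead I would use the \emph{broken} Poincar\'e–Friedrichs inequality for Crouzeix–Raviart spaces, namely $\norm{\vc{v}_h}_{\vc{L}^2(\Om)}\leq C\big(\norm{\Grad_h\vc{v}_h}_{\vc{L}^2(\Om)}+\sum_\Gamma h^{-1}\norm{\sjump{\vc{v}_h}}^2_{\vc{L}^2(\Gamma)}\big)^{1/2}$ — but that bound involves the full broken gradient, which is \emph{not} controlled by $|\vc{v}_h|_{\vc{V}_h}$. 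The honest fix is: write $\vc{v}_h = \Curl_h\vc{\zeta}_h + \Grad_h s_h$ using the exact discrete Hodge decomposition mentioned in the introduction (the splitting $\vc{V}_h = \Curl\vc{\zeta}_h+\Grad S_h$); on the curl part $\Curl_h$ and on the gradient part $\Div_h$ directly control an $H^1$-type norm of the potentials, and together with the jump terms and a discrete Poincar\'e inequality for the potentials one recovers $\norm{\vc{v}_h}_{\vc{L}^2}\leq C|\vc{v}_h|_{\vc{V}_h}$. I expect \textbf{this is the main obstacle}: making the Hodge-decomposition bookkeeping rigorous while only using $\Curl_h\vc{v}_h$, $\Div_h\vc{v}_h$, and the face jumps (not the full $\Grad_h\vc{v}_h$). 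The alternative, following Brenner~\cite{Brenner}, is to prove \eqref{eq:poincare} first by a Nečas/Peetre–Tartar compactness argument: the functional $\vc{v}\mapsto \big(\Curl_h\vc{v},\Div_h\vc{v},\{\sjump{\vc{v}}\}_\Gamma\big)$ is injective on $\vc{V}_h$ for fixed $h$, and a uniform-in-$h$ version follows by showing any bounded sequence with $|\vc{v}_h|_{\vc{V}_h}\to 0$ has a weak limit in $\vc{W}^{1,2}_0(\Om)$ (via Lemma~\ref{lemma:consistency}) that is curl- and divergence-free with zero trace, hence zero, combined with a lower-order compactness to upgrade to strong $L^2$ convergence and conclude the limit has norm $1$, a contradiction. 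Once \eqref{eq:poincare} is in hand, substituting it back into the translation bound of the previous paragraph removes the stray $\norm{\vc{v}_h}_{\vc{L}^2(\Om)}$ factor and delivers \eqref{eq:compact} with the stated constant depending only on shape regularity and $|\Om|$.
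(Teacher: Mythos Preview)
Your proposal contains a genuine gap in the derivation of the Poincar\'e inequality \eqref{eq:poincare}, and this gap propagates into the translation estimate.

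Your compactness/contradiction argument for \eqref{eq:poincare} is circular. You normalize $\|\vc{v}_h\|_{\vc{L}^2}=1$, assume $|\vc{v}_h|_{\vc{V}_h}\to 0$, and invoke Lemma~\ref{lemma:consistency} to get a weak $L^2$ limit $\vc{v}\in\vc{W}^{1,2}_0$ with $\Curl\vc{v}=\Div\vc{v}=0$, hence $\vc{v}=0$. To reach a contradiction you need \emph{strong} $L^2$ convergence, and you appeal to ``lower-order compactness''. But the only spatial compactness available is precisely the translation estimate \eqref{eq:compact} that you are trying to prove; nothing else in the paper gives $L^2$ compactness of a $|\cdot|_{\vc{V}_h}$-bounded sequence. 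Your Hodge-decomposition alternative you yourself flag as the main obstacle and leave undeveloped; indeed, controlling $\|\vc{v}_h\|_{\vc{L}^2}$ from $\|\Curl_h\vc{v}_h\|$, $\|\Div_h\vc{v}_h\|$, and the jump terms via the discrete Hodge splitting is not straightforward, because the curl/div of the potentials do not directly yield $L^2$ control of the potentials uniformly in $h$.

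The paper's approach avoids this circularity by exploiting a specific structural feature of Lemma~\ref{lem:blowup}: the right-hand side is \emph{linear} in $\|\vc{v}_h\|_{\vc{L}^2}$, not quadratic. Concretely, the paper interpolates $\vc{v}_h$ onto a Crouzeix--Raviart space on an auxiliary mesh of scale $\approx|\xi|$, applies Stummel's translation estimate there, and then Lemma~\ref{lem:blowup}, arriving at the intermediate inequality
\[
\|\vc{v}_h(\cdot)-\vc{v}_h(\cdot-\xi)\|_{\vc{L}^2(\Om_\xi)}^2
\;\le\; C\,|\xi|^{1-\frac{\epsilon}{2}}\,\|\vc{v}_h\|_{\vc{L}^2(\Om)}
\Bigl(\textstyle\sum_{\Gamma}h^{\epsilon-1}\|\sjump{\vc{v}_h}\|_{\vc{L}^2(\Gamma)}^2\Bigr)^{1/2}.
\]
Now extend $\vc{v}_h$ by zero and take $|\xi|$ comparable to $\operatorname{diam}(\Om)$: the left-hand side becomes $\|\vc{v}_h\|_{\vc{L}^2(\Om)}^2$, and dividing through by $\|\vc{v}_h\|_{\vc{L}^2(\Om)}$ gives \eqref{eq:poincare} directly. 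Then substituting \eqref{eq:poincare} back into the intermediate inequality yields \eqref{eq:compact}. Your line-integral bound
\[
\|\vc{v}_h(\cdot)-\vc{v}_h(\cdot-\xi)\|^2 \le C|\xi|^2\|\Grad_h\vc{v}_h\|^2 + C\tfrac{|\xi|}{h}\textstyle\sum_\Gamma\|\sjump{\vc{v}_h}\|^2
\]
does not produce this structure: the second term on the right, after rewriting as $|\xi|\,h^{-\epsilon}\sum_\Gamma h^{\epsilon-1}\|\sjump{\vc{v}_h}\|^2$, is quadratic in the jump seminorm with an $h^{-\epsilon}$ prefactor and no $\|\vc{v}_h\|_{\vc{L}^2}$ factor, so the large-$|\xi|$ division trick does not apply, and you are forced back to an independent proof of Poincar\'e. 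The missing idea is thus to route the translation estimate through Stummel's result on an auxiliary mesh so that the entire right-hand side inherits the linear $\|\vc{v}_h\|_{\vc{L}^2}$ factor from Lemma~\ref{lem:blowup}, and then bootstrap.
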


\begin{proof}
Fix $h>0$, and select an arbitrary function $\vc{v}_h$ in $\vc{V}_h(\Om)$. 
For any $\xi$ we construct a new mesh $G_{h}$ such that 
each $G \in G_{h}$ is a subset of one and only one
element $E \in E_{h}$ (e.g., we can divide each element of $E \in E_{h}$ 
into a number of smaller elements). Moreover, we construct this 
new mesh $G_{h}$ such that 
\begin{equation}\label{eq:hxiassertion}
C^{-1}|\xi| \leq h_{G} \leq C|\xi|, \qquad \forall G \in G_{h},
\end{equation}
where $h_{G}$ denotes the diameter of the new 
element $G$ and the constant $C$ depends only 
on the shape--regularity of $E_{h}$.

Now, let $\vc{V}_{|\xi|}(\Om)$ denote the Crouzeix--Raviart element space on $G_{h}$ 
and denote by $\Pi_{|\xi|}^V: \vc{V}_{h}(\Om) \rightarrow \vc{V}_{|\xi|}(\Om)$
the canonical interpolation operator associated with $\vc{V}_{|\xi|}(\Om)$.

Denote by $h_{|\xi|}$ the maximal element diameter in $G_{h}$. 
From standard properties of the Crouzeix--Raviart element \cite{Stummel:1980fk}, we have
\begin{align*}
	\|\Pi^V_{|\xi|}\vc{v}_{h}(x)-\Pi^V_{|\xi|}\vc{v}_{h}(x-\xi)\|_{\vc{L}^2(\Om_\xi)}^2 
	& \leq (h_{|\xi|}^2+|\xi|^2)\sum_{G \in G_{h}}
	\|\Grad\Pi^V_{|\xi|}\vc{v}_{h}\|_{\vc{L}^2(G)}^2
	\\ & \leq (h_{|\xi|}^2+|\xi|^2)\|\Grad_{h}\vc{v}_{h}\|_{\vc{L}^2(\Om)}^2,
\end{align*}
where the second inequality follows from the 
properties of the operator $\Pi_{|\xi|}^V$.

Lemma \ref{lem:blowup} and the bounds \eqref{eq:hxiassertion} 
allow us to conclude the following estimate:
\begin{align*}
	&\|\Pi^V_{|\xi|}\vc{v}_{h}(x)-\Pi^V_{|\xi|}\vc{v}_{h}(x-\xi)\|_{\vc{L}^2(\Om_\xi)}^2 
	\\ &\quad \leq h_{|\xi|}^{-1 -\frac{\epsilon}{2}}
	(h_{|\xi|}^2 + |\xi|^2)\|\vc{v}_{h}\|_{\vc{L}^2(\Om)}
	\left(\sum_{\Gamma \in \Gamma^I_{h}}h^{\epsilon-1}
	\|\sjump{\vc{v}_{h}}\|_{\vc{L}^2(\Gamma)}^2 \right)^\frac{1}{2}\\
	&\quad \leq C|\xi|^{1-\frac{\epsilon}{2}}\|\vc{v}_{h}\|_{\vc{L}^2(\Om)}
	\left(\sum_{\Gamma \in \Gamma^I_{h}}h^{\epsilon-1}
	\|\sjump{\vc{v}_{h}}\|_{\vc{L}^2(\Gamma)}^2 \right)^\frac{1}{2}.
\end{align*}

Keeping in mind that $\vc{v}_{h}|_E \in \vc{W}^{1,2}(E)$, $\forall E \in E_{h}$, 
we can apply Lemma \ref{lemma:interpolationerror} and 
the previous estimate to obtain
\begin{equation}\label{eq:betweenbarkandwood}
	\begin{split}
		&\|\vc{v}_{h}(\cdot)-\vc{v}_{h}(\cdot-\xi)\|_{\vc{L}^2(\Om_\xi)}^2 \\
		&\quad  
		\leq 2\|\vc{v}_{h}-\Pi^V_{|\xi|}\vc{v}_{h}\|_{\vc{L}^2(\Om_\xi)}^2
		+\|\Pi^V_{|\xi|}\vc{v}_{h}(x)-\Pi^V_{|\xi|}
		\vc{v}_{h}(x-\xi)\|_{\vc{L}^2(\Om_\xi)}^2 \\
		&\quad 
		\leq C|\xi|^{1-\frac{\epsilon}{2}}\|\vc{v}_{h}\|_{\vc{L}^2(\Om)}
		\left(\sum_{\Gamma \in \Gamma^I_{h}}h^{\epsilon-1}
		\|\sjump{\vc{v}_{h}}\|_{\vc{L}^2(\Gamma)}^2 \right)^\frac{1}{2}.
	\end{split}
\end{equation}

Next, denote by $\vc{v}_h^\text{ext}$ the extension of $\vc{v}_h$ 
by zero to all of $\mathbb{R}^N$. By the previous calculations, we 
conclude that $\vc{v}_h^\text{ext}$ satisfies \eqref{eq:betweenbarkandwood} 
with $\Om_\xi$ replaced by $\mathbb{R}^N$ (keep in mind that the 
jump terms are only summed over internal faces). 
Thus, we can fix $|\xi|$ large in \eqref{eq:betweenbarkandwood} 
to discover that
\begin{align*}
	& \|\vc{v}^\text{ext}_{h}\|_{\vc{L}^2(\Om)}^2 
	\leq C|\operatorname{diam}(\Om)|^{1-\frac{\epsilon}{2}}
	\|\vc{v}^\text{ext}_{h}\|_{\vc{L}^2(\Om)}\left(\sum_{\Gamma \in
	\Gamma^I_{h}}h^{\epsilon-1}\|\sjump{\vc{v}^\text{ext}_{h}}\|_{\vc{L}^2(\Gamma)}^2 
	\right)^\frac{1}{2},
\end{align*}
and hence
\begin{align*}
	\|\vc{v}_{h}\|_{\vc{L}^2(\Om)}^2 
	& \leq C|\operatorname{diam}(\Om)|^{1-\frac{\epsilon}{2}}\left(\sum_{\Gamma \in
	\Gamma^I_{h}}h^{\epsilon-1}\|\sjump{\vc{v}_{h}}\|_{\vc{L}^2(\Gamma)}^2 \right) 
	\leq C|\operatorname{diam}(\Om)|^{1-\frac{\epsilon}{2}}\abs{\vc{v}_h}_{\vc{V}_h}^2,
\end{align*}
which is \eqref{eq:poincare}.

Finally, setting \eqref{eq:poincare} into \eqref{eq:betweenbarkandwood} 
gives \eqref{eq:compact}.
\end{proof}

We end this section with	
	
\begin{lemma}\label{lemma:jumpcontrol}
There exists a constant $C>0$, which depends only on the shape regularity of
$E_h$ and the size of $\Om$, such that 
for any $\vc{v}_h \in \vc{V}_h(\Om)$
\begin{align*}
	&\left|\sum_{\Gamma \in \Gamma^I_{h}}h^{\epsilon-1}\int_{\Gamma}
	\sjump{\vc{v}_{h}\cdot \nu}\sjump{\Pi_{h}^V\vc{w}\cdot \nu}
	+\sjump{\vc{v}_{h}\times \nu}
	\sjump{\Pi_{h}^V\vc{w}\times \nu} \ dS(x)\right| \\
	& \quad \leq C h^{\frac{\epsilon}{2}}\|\vc{v}_{h}\|_{\vc{V}_{h}(\Om)}
	\|\Grad \vc{w}\|_{\vc{L}^2(\Omega)}, 
	\qquad \forall \vc{w} \in \vc{W}^{1,2}_{0}(\Om).
\end{align*}
\end{lemma}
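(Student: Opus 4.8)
The plan is to exploit the crucial orthogonality built into the Crouzeix--Raviart interpolation operator, namely that $\Pi_h^V \vc{w}$ has the \emph{same mean value as $\vc{w}$ on every face} $\Gamma \in \Gamma_h$, combined with the fact that the polynomial degree is $1$. Fix $\Gamma \in \Gamma_h^I$ shared by elements $E_1, E_2$. On each side, $\Pi_h^V \vc{w}|_{E_i}$ is affine, so on $\Gamma$ the trace of $\jump{\Pi_h^V \vc{w}}_\Gamma$ is an affine function whose integral over $\Gamma$ vanishes (by \eqref{def:CR}/\eqref{eq:opdef}); hence it has mean zero on $\Gamma$. The key point is to replace $\Pi_h^V \vc{w}$ by $\vc{w}$ inside the jump on that factor: write, on each $\Gamma$,
$$
\sjump{\Pi_h^V \vc{w}} = \sjump{\Pi_h^V \vc{w} - \vc{w}}
$$
because $\vc{w} \in \vc{W}^{1,2}_0(\Om)$ is continuous across $\Gamma$, so $\sjump{\vc{w}} = 0$ (and at boundary faces the degrees of freedom vanish, consistent with $\vc{w}|_{\pOm}=0$). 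Thus the whole sum becomes a sum of face integrals of $\sjump{\vc{v}_h \cdot \nu}\,\sjump{(\Pi_h^V\vc{w}-\vc{w})\cdot\nu}$ plus the analogous $\times\nu$ term.

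Next I would apply Cauchy--Schwarz on each face and then over the faces, factoring out $h^{\epsilon-1}$ symmetrically as $h^{(\epsilon-1)/2}\cdot h^{(\epsilon-1)/2}$:
$$
\left|\sum_{\Gamma}h^{\epsilon-1}\!\int_\Gamma \cdots\right|
\le \left(\sum_{\Gamma}h^{\epsilon-1}\norm{\sjump{\vc{v}_h}}_{\vc{L}^2(\Gamma)}^2\right)^{1/2}
\left(\sum_{\Gamma}h^{\epsilon-1}\norm{\sjump{\Pi_h^V\vc{w}-\vc{w}}}_{\vc{L}^2(\Gamma)}^2\right)^{1/2}.
$$
The first factor is bounded by $\abs{\vc{v}_h}_{\vc{V}_h} \le \norm{\vc{v}_h}_{\vc{V}_h}$, cf.\ \eqref{eq:norm}. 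For the second factor, on each face bound $\norm{\sjump{\Pi_h^V\vc{w}-\vc{w}}}_{\vc{L}^2(\Gamma)} \le \norm{(\Pi_h^V\vc{w}-\vc{w})|_{E_1}}_{\vc{L}^2(\Gamma)} + \norm{(\Pi_h^V\vc{w}-\vc{w})|_{E_2}}_{\vc{L}^2(\Gamma)}$, and to each term apply the trace inequality (1) of Lemma~\ref{lemma:toolbox} on the relevant element $E$ followed by the interpolation error estimate of Lemma~\ref{lemma:interpolationerror} with $s=1$. This gives, per element $E$ adjacent to $\Gamma$,
$$
\norm{\Pi_h^V\vc{w}-\vc{w}}_{\vc{L}^2(\Gamma)}
\le c h_E^{-1/2}\Bigl(\norm{\Pi_h^V\vc{w}-\vc{w}}_{\vc{L}^2(E)} + h_E\norm{\Grad_h(\Pi_h^V\vc{w}-\vc{w})}_{\vc{L}^2(E)}\Bigr)
\le c h_E^{1/2}\norm{\Grad\vc{w}}_{\vc{L}^2(E)}.
$$
Therefore $h^{\epsilon-1}\norm{\sjump{\Pi_h^V\vc{w}-\vc{w}}}_{\vc{L}^2(\Gamma)}^2 \le C h^{\epsilon}\,\norm{\Grad\vc{w}}_{\vc{L}^2(E_1\cup E_2)}^2$, and summing over all internal faces (each element is counted a bounded number of times by shape regularity) yields the second factor $\le C h^{\epsilon/2}\norm{\Grad\vc{w}}_{\vc{L}^2(\Om)}$. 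Combining the two factors gives exactly the claimed bound $C h^{\epsilon/2}\norm{\vc{v}_h}_{\vc{V}_h(\Om)}\norm{\Grad\vc{w}}_{\vc{L}^2(\Om)}$.

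The only subtle point — the ``main obstacle'' — is the justification that one may insert $-\vc{w}$ into the $\Pi_h^V\vc{w}$ jump; this rests on $\vc{w}$ being single-valued across interior faces and vanishing on $\pOm$ together with the convention that the CR degrees of freedom vanish at boundary faces, so that boundary faces contribute nothing extra. Everything else is a routine chaining of the trace inequality (Lemma~\ref{lemma:toolbox}), the interpolation estimate (Lemma~\ref{lemma:interpolationerror}), and two Cauchy--Schwarz applications; no new idea is needed beyond the symmetric splitting of the weight $h^{\epsilon-1}$, which is what produces the gain $h^{\epsilon/2}$.
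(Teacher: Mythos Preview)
Your proof is correct and follows essentially the same route as the paper: replace $\sjump{\Pi_h^V\vc{w}}$ by $\sjump{\Pi_h^V\vc{w}-\vc{w}}$ using that $\vc{w}\in\vc{W}^{1,2}_0(\Om)$ has no jump across interior faces, then apply Cauchy--Schwarz over the faces, the trace inequality of Lemma~\ref{lemma:toolbox}, and the interpolation estimate of Lemma~\ref{lemma:interpolationerror}. The only cosmetic difference is that the paper splits the weight asymmetrically as $h^{\epsilon-1}=h^{\epsilon/2}\cdot h^{(\epsilon-1)/2}\cdot h^{-1/2}$ (pulling $h^{\epsilon/2}$ out front and putting $h^{-1}$ on the $\Pi_h^V\vc{w}$ factor), whereas you split symmetrically and recover the $h^{\epsilon/2}$ at the end; the computations are otherwise identical.
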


\begin{proof}
Using the H\"older inequality,
\begin{align*}
	&\left|\sum_{\Gamma \in \Gamma^I_{h}}h^{\epsilon-1}
	\int_{\Gamma}\sjump{\vc{v}_{h}\cdot \nu}
	\sjump{\Pi_{h}^V\vc{w}\cdot \nu}\ dS(x)\right| 
	\\ & \quad 
	\leq h^\frac{\epsilon}{2}
	\left(\sum_{\Gamma \in \Gamma^I_{h}}h^{\epsilon-1}
	\int_{\Gamma}\sjump{\vc{v}_{h}\cdot \nu}^2 \ dS(x) \right)^\frac{1}{2}
	\left(\sum_{\Gamma \in \Gamma^I_{h}}
	h^{-1}\int_{\Gamma}\sjump{\Pi_{h}^V\vc{w} }^2\ dS(x)\right)^\frac{1}{2} \\
	&\quad
	\leq h^\frac{\epsilon}{2}
	\left(\sum_{\Gamma \in \Gamma^I_{h}}h^{\epsilon-1}
	\int_{\Gamma}\sjump{\vc{v}_{h}\cdot \nu}^2 \ dS(x) \right)^\frac{1}{2}\\
	&\quad \qquad\qquad  
	\times\left(\sum_{E \in E_{h}}
	h^{-1}\int_{\partial E}\abs{\Pi_{h}^V\vc{w}
	-\vc{w}}^2\ dS(x)\right)^\frac{1}{2}.
\end{align*}
To obtain the last inequality, we have applied the calculation
\begin{equation*}
	\begin{split}
		\jump{\Pi_h^V\vc{w}}_\Gamma^2 &= \left|(\Pi_h^V\vc{w})|_{\partial E^+} - \vc{w} + \vc{w}- (\Pi_h^V\vc{w})|_{\partial E^-}\right|^2 \\
		&\leq |(\Pi_h^V\vc{w})|_{\partial E^+} - \vc{w}|^2 + |(\Pi_h^V\vc{w})|_{\partial E^-} - \vc{w}|^2,
	\end{split}
\end{equation*}
where $E^+$ and $E^-$ are the two element sharing the face $\Gamma$.

By using (1) in Lemma \ref{lemma:toolbox}, we further deduce that
\begin{align*}
	&\left|\sum_{\Gamma \in \Gamma_{h}}h^{\epsilon-1}\int_{\Gamma}
	\sjump{\vc{v}_{h}\cdot \nu}\sjump{\Pi_{h}^V\vc{w}\cdot \nu}\ dS(x)\right| \\
	&\leq h^\frac{\epsilon}{2}\left(\sum_{\Gamma \in \Gamma^I_{h}}h^{\epsilon-1}
	\int_{\Gamma}\sjump{\vc{v}_{h}\cdot \nu}^2 \ dS(x) \right)^\frac{1}{2} \\
	&\qquad \times \left(\sum_{E \in E_{h}}h^{-2}\|\Pi_{h}^V \vc{w} - \vc{w}\|_{\vc{L}^2(E)}^2 
	+ \|\Grad(\Pi_{h}^V \vc{w} - \vc{w})\|_{\vc{L}^2(E)}^2\right)^\frac{1}{2} \\
	& \leq h^\frac{\epsilon}{2}C\|\vc{v}_{h}\|_{\vc{V}_{h}}
	\|\Grad \vc{w}\|_{L^2(\Omega)},
\end{align*}
where the last inequality follows from Lemma \ref{lemma:interpolationerror}.

By analogous calculations for the tangential jumps,
$$
\left|\sum_{\Gamma \in \Gamma^I_{h}}h^{\epsilon-1}\int_{\Gamma}
\sjump{\vc{v}_{h}\times \nu}\sjump{\Pi_{h}^V\vc{w}\times \nu}\ dS(x)\right| 
\leq h^\frac{\epsilon}{2}C\|\vc{v}_{h}\|_{\vc{V}_{h}}
\|\Grad \vc{w}\|_{L^2(\Omega)}.
$$
This concludes the proof.
\end{proof}


\section{Numerical method and main result}\label{sec:numerical-method}


Given a time step $\Dt>0$, we discretize the time interval $[0,T]$ in 
terms of the points $t^m=m\Dt$, $m=0,\dots,M$, where it is assumed that $M\Dt=T$.  
Regarding the spatial discretization, we let $\{E_{h}\}_{h}$ be a shape regular 
family of tetrahedral meshes of $\Omega$,  where $h$ is the maximal diameter.
It will be a standing assumption that $h$ and $\Delta t$ are 
related like $\Delta t = c h$ for some constant $c$. 
By shape regular we mean the existence of a constant 
$\kappa > 0$ such that every $E \in E_{h}$ contains a ball of radius 
$\lambda_{E} \geq \frac{h_{E}}{\kappa}$, where $h_{E}$ is the diameter of $E$. 
Furthermore, we let $\Gamma_{h}$ denote the set of faces in $E_{h}$. 
Throughout the paper, we will use ``three dimensional" terminology 
(tetrahedron, face, etc.) when referring to both the three dimensional 
case and the two dimensional case (triangle, edge, etc). 

On each element $E \in E_{h}$, we denote by $Q(E)$ the constants 
on $E$. The functions that are piecewise constant with 
respect to the elements of a mesh $E_{h}$ are 
denoted by $Q_h(\Om)$. We denote by $\vc{V}_h(\Om)$ 
the Crouzeix--Raviart finite element space \eqref{def:CR} formed on $E_h$.
To incorporate the boundary condition, we let the 
degrees of freedom of $\vc{V}_h(\Om$ vanish at the boundary: 
$$
\int_\Gamma \vc{v}_h~dS(x) = 0, 
\quad \forall \Gamma \in \Gamma_h\cap \partial \Om, 
\quad \forall \vc{v}_h \in \vc{V}_h(\Om).
$$

We shall need to introduce some additional notation 
related to the discontinuous Galerkin method. 
Concerning the boundary $\partial E$ of an element $E$, we write $f_{+}$ 
for the trace of the function $f$ achieved from within the element $E$ 
and $f_{-}$ for the trace of $f$ achieved from outside $E$. 
Concerning a face $\Gamma$ that is shared between two 
elements $E_{-}$ and $E_{+}$, we will write $f_{+}$ for 
the trace of $f$ achieved from within $E_{+}$ and $f_{-}$ for the trace
of $f$ achieved from within $E_{-}$. Here $E_{-}$ and $E_{+}$ are 
defined such that $\nu$ points from $E_{-}$ to $E_{+}$, where $\nu$ is 
fixed (throughout) as one of the two possible 
normal components on each face $\Gamma$.
We also write $\jump{f}_{\Gamma}= f_{+} - f_{-}$ for the jump of $f$ 
across the face $\Gamma$, while forward time-differencing 
of $f$ is denoted by $\jump{f^m} = f^{m+1} - f^m$ and
$\Pth f^m = \frac{\jump{f^m}}{\Dt}$. The set of inner faces of $\Gamma_h$ will be denoted by 
$\Gamma_h^I = \Set{\Gamma \in \Gamma_h; \Gamma \not \subset \partial \Om}$.


\begin{definition}[Numerical scheme]\label{def:num-scheme}
Let $\Set{\vrho^0_h(x)}_{h>0}$ be a sequence (of piecewise constant 
functions) in $Q_{h}(\Omega)$ that satisfies $\vrho_h^0>0$ for each fixed $h>0$ 
and $\varrho^0_h\to \vrho_0$ a.e.~in $\Om$ and in $L^1(\Om)$ 
as $h\to 0$. Set $\vc{f}_{h}(t,\cdot)=\vc{f}_{h}^m(\cdot) 
:= \frac{1}{\Delta t}\int_{t^{m-1}}^{t^m} \Pi_h^Q \vc{f}(s,\cdot) \ ds$, 
for $t\in (t_{m-1},t_m)$, $m=1,\ldots,M$.

Determine functions 
$$
(\varrho^m_{h},\vc{u}^m_{h}) \in Q_{h}(\Omega)
\times \vc{V}_{h}(\Omega), \quad m=1,\dots,M,
$$
such that for all $\phi_{h} \in Q_{h}(\Omega)$,
\begin{equation}\label{FEM:contequation}
	\begin{split}
		&\int_\Omega \Pth (\vrho_{h}^m) \phi_{h}\ dx 
		 =\Delta t\sum_{\Gamma \in \Gamma^I_h}\int_\Gamma 
		\left(\varrho^m_{-}(\vc{u}_h^m \cdot \nu)^+_h 
		+\varrho^m_+(\vc{u}^{m}_h \cdot \nu)_h^-\right)
		\jump{\phi_{h}}_\Gamma\ dS(x).
	\end{split}
\end{equation}
and for all $\vc{v}_{h} \in  \vc{V}_{h}(\Omega)$,
\begin{equation}\label{FEM:momentumeq}
	\begin{split}
		&\int_{\Om} \mu\Curl_{h} \vc{u}^m_{h}\Curl_{h} \vc{v}_{h} 
		+ \left[(\mu + \lambda)\Div_{h} \vc{u}^m_{h}
		-p(\varrho^m_{h})\right]\Div_{h} \vc{v}_{h}\ dx \\
		&\qquad \qquad 
		+\mu \sum_{\Gamma \in \Gamma^I_{h}} h^{\epsilon -1}
		\int_{\Gamma}\sjump{\vc{u}^m_{h}\cdot \nu}\sjump{\vc{v}_{h}\cdot \nu} 
		+ \sjump{\vc{u}^m_{h}\times \nu}\sjump{\vc{v}_{h}\times \nu}\ dS(x) \\
		& \quad = \int_{\Omega}\vc{f}^m_{h}\vc{v}_{h}\ dx, 
	\end{split}
\end{equation}
for $m=1,\dots,M$. 

In \eqref{FEM:contequation}, we have introduced the notation
\begin{equation}\label{eq:velocity-average} 
	(\vc{u}_{h} \cdot \nu)_h^{\pm}=\left(\frac{1}{|\Gamma|}
	\int_{\Gamma} \vc{u}_{h}\cdot \nu \ dS(x)\right)^{\pm},
\end{equation} 
where $a^{+}=\max(a,0)$ and $a^{-}=\min(a,0)$.
\end{definition}

\begin{remark}
Since the normal velocity components $(\vu_h^m\cdot \nu)$ 
are discontinuous across element faces, the 
continuity method \eqref{FEM:contequation} 
approximates $(\vr\vu \cdot \nu)$ using instead 
the average normal velocity $\frac{1}{|\Gamma|}
\int_\Gamma(\vu_h^m \cdot \nu) \ dS(x)$, cf.~\eqref{eq:velocity-average}, 
and traces of $\vr$ are taken in the upwind direction with 
respect to the average normal velocity. 
\end{remark}

We now make an observation that will simplify the subsequent analysis. 
Let $\mathcal{N}_h(\Om)$ denote the lowest order div conforming 
Nedelec finite element space of the first kind \cite{Nedelec:1980ec,Karlsen1} on $E_h$. 
In two dimensions, $\mathcal{N}_h(\Om)$ is the Raviart--Thomas space. 

We will need the interpolation operator 
$\Pi_h^\mathcal{N}:\vc{V}_h(\Om)
\rightarrow \mathcal{N}_h(\Om)$ defined by
$$
\int_\Gamma \left(\Pi_h^\mathcal{N}\vc{v}_h\right)\cdot \nu \ dS(x) 
= \int_\Gamma \vc{v}_h\cdot \nu \ dS(x), 
\quad \forall \Gamma \in \Gamma_h.
$$
Then, by definition, the interpolated velocity
\begin{equation}\label{eq:nedelec}
	\widetilde{\vu_h^m} 
	:= \Pi_h^\mathcal{N}\vu_h^m
\end{equation}
satisfies
\begin{equation}\label{eq:normalsagree}
	(\widetilde{\vu_h^m}\cdot \nu)^\pm 
	=\left(\widetilde{\vc{u}_h^m}\cdot \nu \right)_h^\pm 
	=(\vc{u}_h^m \cdot \nu)_h^\pm,
\end{equation}
where the first equality is valid since $\widetilde{\vu_h^m} \cdot \nu$ is 
constant on each $\Gamma \in \Gamma_h$. 
Since both element spaces have piecewise constant divergence, 
a direct calculation yields
\begin{equation}\label{eq:divagree}
	\Div \widetilde{\vu_h^m} 
	= \Div_h \vu_h^m.
\end{equation}

Now, setting \eqref{eq:normalsagree} into the 
continuity method \eqref{FEM:contequation} leads to the relation
\begin{equation}\label{FEM:oldcontequation}
	\begin{split}
		&\int_\Omega \Pth (\vrho_{h}^m) \phi_{h}\ dx 
		=\Delta t\sum_{\Gamma \in \Gamma^I_h}\int_\Gamma 
		\left(\varrho^m_{-}(\widetilde{\vc{u}_h^m} \cdot \nu)^+
		+\varrho^m_+(\widetilde{\vc{u}^{m}_h} \cdot \nu)^-\right)
		\jump{\phi_{h}}_\Gamma\ dS(x),
	\end{split}
\end{equation}
for all $\phi_h \in Q_h(\Om)$. Hence, we can think of 
the pair $(\vr_h^m, \widetilde{\vu_h^m})$ as a solution 
to a continuity method in which $\mathcal{N}_h(\Om)$ is 
used to approximate the velocity. In fact, \eqref{FEM:oldcontequation} is 
the method examined in \cite{Karlsen1}. 
We will frequently utilize \eqref{FEM:oldcontequation}, instead 
of \eqref{FEM:contequation}, to easily obtain properties 
of our continuity approximations.

For each fixed $h>0$, the numerical solution 
$\Set{(\varrho^m_{h},\vc{u}^m_{h})}_{m=0}^M$
is extended to the whole of $(0,T]\times \Omega$ by setting 
\begin{equation}\label{eq:num-scheme-II}
	(\varrho_{h},\vc{u}_{h})(t)=(\varrho^m_{h},\vc{u}^m_{h}), 
	\qquad t\in (t_{m-1},t_m], \quad m=1,\dots,M.
\end{equation}
In addition, we set $\varrho_{h}(0)= \varrho^0_{h}$. 

\subsection{Main result}
Our main result is that, passing if necessary to a subsequence, 
$\Set{(\varrho_{h},\vc{u}_{h})}_{h>0}$ 
converges to a weak solution. More precisely, we will prove

\begin{theorem}[Convergence]\label{theorem:mainconvergence}
Suppose $\vc{f}\in \vc{L}^2(\Dom)$, and $\vrho_0 \in L^\gamma (\Om)$ 
with $\gamma > 1$. Let $\Set{(\varrho_{h},\vc{u}_{h})}_{h>0}$ 
be a sequence of numerical solutions 
constructed according to \eqref{eq:num-scheme-II} 
and Definition \ref {def:num-scheme}. 
Then, passing if necessary to a subsequence as $h\to 0$, 
$\vc{u}_{h} \weak \vc{u}$ in $L^2(0,T;\vc{L}^{2}(\Omega))$, 
$\varrho_{h}\vc{u}_{h} \weak \varrho\vc{u}$ in the sense 
of distributions on $\Dom$, and $\varrho_{h} \rightarrow \varrho$
a.e.~in $\Dom$, where the limit pair $(\vrho, \vc{u})$  is a weak 
solution as stated in Definition \ref{def:weak}.
\end{theorem}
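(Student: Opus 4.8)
The plan is to follow the by-now classical strategy for passing to the limit in approximations of compressible flow, due to Lions and Feireisl, but carried out entirely at the discrete level using the special structure of the Crouzeix--Raviart space. First I would collect the a priori estimates from Section \ref{sec:basic}: testing \eqref{FEM:momentumeq} with $\vu_h^m$ and using the discrete energy inequality derived via the renormalized continuity method \eqref{FEM:oldcontequation} yields, after summation in $m$, a bound for $\vu_h$ in $L^2(0,T;\vc{V}_h(\Om))$ and for $\vrho_h$ in $L^\infty(0,T;L^\gamma(\Om))$, together with non-negativity $\vrho_h\ge 0$ and conservation of mass. The crucial additional ingredient is a higher integrability estimate $\vrho_h\inb L^{\gamma+\theta}(\Dom)$ for some $\theta>0$, obtained by testing \eqref{FEM:momentumeq} with $\vc{v}_h=\Pi_h^V\Bop{\vrho_h^\theta-\avg{\vrho_h^\theta}}$ (the discrete Bogovskii operator), which in particular gives $\vrho_h\inb L^2(\Dom)$ once $\theta$ is chosen appropriately; without this the limit pressure and the renormalization lemma are out of reach.

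Next I would extract weak limits: by Lemma \ref{lemma:consistency} a subsequence of $\vu_h$ converges weakly in $\vc{L}^2$ to some $\vc{u}\in\vc{W}^{1,2}_0(\Om)$ with $\Div_h\vu_h\weak\Div\vu$ and $\Curl_h\vu_h\weak\Curl\vu$, and the jump terms in \eqref{eq:norm} vanish in the limit; by the $L^\infty(L^\gamma)$ bound, $\vrho_h\weakstar\vrho$ and $p(\vrho_h)\weak\overline{p(\vrho)}$, and by the higher integrability, $p(\vrho_h)\weak\overline{p(\vrho)}$ in $L^{q}(\Dom)$ for some $q>1$. Using the discrete time-continuity and consistency properties of \eqref{FEM:oldcontequation} inherited from \cite{Karlsen1}, together with the spatial compactness estimate \eqref{eq:compact}, Lemma \ref{lemma:aubinlions} applies with $g_h=\vrho_h$ and $f_h$ (a regularized) $\vu_h$, giving $\vrho_h\vu_h\weak\vrho\vu$ in the sense of distributions. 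Passing to the limit in the linear terms of \eqref{FEM:contequation} and \eqref{FEM:momentumeq} — using the interpolation error estimates of Lemma \ref{lemma:interpolationerror}, the jump control of Lemma \ref{lemma:jumpcontrol} (which kills the stabilization term against $\Pi_h^V\vc\phi$), and $\Div_h\Pi_h^V\vc\phi\to\Div\vc\phi$ — then shows that $(\vrho,\vc{u})$ satisfies \eqref{eq:weak-rho} and a version of \eqref{eq:weak-u} with $p(\vrho)$ replaced by $\overline{p(\vrho)}$. It remains to identify $\overline{p(\vrho)}=p(\vrho)$.

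For that identification I would follow the effective-viscous-flux route sketched in the introduction. Test \eqref{FEM:momentumeq} with $\vc{v}_h=\Pi_h^V\Aop{\vrho_h}=\Pi_h^V\Grad\Delta^{-1}\vrho_h$, which by \eqref{eq:commute} is (away from the boundary) a discrete solution of $\Div_h\vc{v}_h=\vrho_h$, $\Curl_h\vc{v}_h=0$; the curl term drops, the jump term tends to zero by Lemma \ref{lemma:jumpcontrol} and Lemma \ref{lem:blowup}, and one is left with $\iint\eff(\vrho_h,\vu_h)\,\vrho_h\,\phi\,\dxdt$ plus commutator terms that are controlled via the compactness of $\Delta^{-1}$ and the Aubin--Lions lemma applied to $\vrho_h$ against the (compact) operator $\Aop{\cdot}$. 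Symmetrically, using $\vc\phi\Aop{\vrho}$ in the limit equation \eqref{eq:weak-u} and the div-curl / compensated-compactness structure, one obtains the weak continuity identity \eqref{eq:intro-weakcont}. Combined with the renormalized discrete continuity equation — whose limit, via Lemma \ref{lemma:feireisl} applied to the limit $\vrho\in L^2(\Dom)$, gives a renormalized equation for $\vrho$ — and with the analogous renormalized identity for the limit, one derives that $\overline{\vrho\log\vrho}-\vrho\log\vrho$ has nonpositive time derivative and vanishes at $t=0$, hence $\overline{\vrho\log\vrho}=\vrho\log\vrho$ a.e.; strict convexity and Lemma \ref{lem:prelim} then yield $\vrho_h\to\vrho$ a.e.~in $\Dom$, so $\overline{p(\vrho)}=p(\vrho)$ and the limit pair is a genuine weak solution.

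The main obstacle is the effective-viscous-flux step: at the discrete level the test function $\Pi_h^V\Aop{\vrho_h}$ is only a div--curl solution away from the boundary, so one must carefully handle the boundary layer and, more seriously, control the commutator between the nonlocal operator $\Aop{\cdot}$ and the discrete projections/jumps while all quantities involved ($\vrho_h$, $\Div_h\vu_h$, $p(\vrho_h)$) converge only weakly; establishing \eqref{eq:intro-weakcont} rigorously — rather than its continuous analogue — is exactly where the nonconforming div--curl discretization \eqref{intro:fem} earns its keep, and where the delicate use of Lemmas \ref{lemma:aubinlions}, \ref{lemma:jumpcontrol}, and \ref{lem:blowup} is concentrated.
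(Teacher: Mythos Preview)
Your proposal is correct and follows essentially the same route as the paper: the a priori estimates of Section~\ref{sec:basic}, extraction of weak limits via Lemma~\ref{lemma:consistency}, the Aubin--Lions-type Lemma~\ref{lemma:aubinlions} for $\vrho_h\vu_h\weak\vrho\vu$, the effective-viscous-flux identity using $\Pi_h^V$ applied to a localized $\Aop{\cdot}$-test function together with Lemma~\ref{lemma:jumpcontrol}, and then the $\vrho\log\vrho$ argument combined with Lemma~\ref{lemma:feireisl} and Lemma~\ref{lem:prelim} for strong convergence. The only cosmetic differences are that the paper tests with $\Pi_h^V\Bop{p(\vrho_h)-\avg{p(\vrho_h)}}$ (i.e., $\theta=\gamma$) to get $\vrho_h\inb L^{2\gamma}$ directly, and that it takes the effective-viscous-flux step in one shot with the test function $\psi\,\Pi_h^V\bigl[\phi\,\Aop{\vrho_h-\vrho}\bigr]$ (using $\Aop{\vrho_h-\vrho}\to 0$ strongly in $L^2$) rather than your two-pass ``discrete vs.\ limit'' comparison.
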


This theorem will be a consequence of the results 
proved in Sections \ref{sec:basic} and \ref{sec:conv}.

\subsection{The numerical method is well--defined}
We now turn to the existence of a solution to the discrete problem. 
However, we commence with the following easy lemma 
providing a positive lower bound for the density.
 
\begin{lemma} \label{lemma:vrho-props}
Fix any $m=1,\dots,M$ and suppose $\varrho^{m-1}_{h} 
\in Q_{h}(\Om)$, $\vc{u}^m_{h} \in \vc{V}_{h}(\Om)$ are given bounded functions. 
Then the solution $\varrho^{m}_{h} \in Q_{h}(\Om)$ of the discontinuous 
Galerkin method \eqref{FEM:contequation} satisfies
$$
\min_{x \in \Omega}\varrho_{h}^m 
\geq \min_{x \in \Omega}\varrho_{h}^{m-1}\left(\frac{1}{1 + 
\Delta t \|\Div_h \vc{u}^m_{h}\|_{L^\infty(\Omega)}}\right).
$$
Consequently, if $\varrho^{m-1}_{h}(\cdot)>0$, then $\varrho^{m}_{h}(\cdot)>0$.
\end{lemma}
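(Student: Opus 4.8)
The plan is to establish a discrete minimum principle for the continuity scheme \eqref{FEM:contequation}. Since $\varrho_h^m$ is piecewise constant, its minimum over $\Om$ is attained on some element $E_\star\in E_h$; write $\varrho_\star:=\varrho_h^m|_{E_\star}=\min_\Om\varrho_h^m$. The first step is to take as test function in \eqref{FEM:contequation} the (admissible) characteristic function $\phi_h=\mathbf{1}_{E_\star}\in Q_h(\Om)$. Then $\jump{\mathbf{1}_{E_\star}}_\Gamma$ vanishes on every internal face not contained in $\partial E_\star$, and equals $+1$ or $-1$ on the internal faces of $\partial E_\star$ according to whether the globally fixed normal $\nu$ points out of or into $E_\star$, so the identity reduces to a local relation between $\varrho_\star$, the values $\varrho_{E'}$ of $\varrho_h^m$ on the neighbours $E'$ of $E_\star$, and the traces of $\vc{u}_h^m$ on $\partial E_\star$.

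The second, and only slightly delicate, step is the sign bookkeeping. Let $\nu_\star$ be the outward unit normal of $E_\star$ and set $V_\Gamma:=\frac{1}{|\Gamma|}\int_\Gamma\vc{u}_h^m\cdot\nu_\star\ dS(x)$ for $\Gamma\subset\partial E_\star$. A short case check --- in which the sign flip of $\jump{\mathbf{1}_{E_\star}}_\Gamma$ cancels the one produced in $(\vc{u}_h^m\cdot\nu)_h^\pm$ when $\nu=-\nu_\star$, together with the elementary identities $(-a)^+=-a^-$, $(-a)^-=-a^+$ --- shows that face $\Gamma$ contributes $-|\Gamma|\big(\varrho_\star V_\Gamma^+ +\varrho_{E'}V_\Gamma^-\big)$ to the right-hand side, so that \eqref{FEM:contequation} reads
$$
|E_\star|\,\big(\varrho_\star-\varrho_h^{m-1}|_{E_\star}\big)
= -\,\Dt\sum_{\substack{\Gamma\subset\partial E_\star\\ \Gamma\in\Gamma^I_h}} |\Gamma|\,\big(\varrho_\star V_\Gamma^+ +\varrho_{E'}V_\Gamma^-\big).
$$
Now comes the key point: on an inflow face ($V_\Gamma<0$) the upwind rule selects the \emph{neighbour} value $\varrho_{E'}$, and $\varrho_{E'}\ge\varrho_\star$ by minimality; since $V_\Gamma^-\le 0$ this gives $\varrho_{E'}V_\Gamma^-\le\varrho_\star V_\Gamma^-$, so replacing every $\varrho_{E'}$ by $\varrho_\star$ can only decrease the right-hand side. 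After this replacement the face sum telescopes using $V_\Gamma^+ +V_\Gamma^- =V_\Gamma$ and
$$
\sum_{\substack{\Gamma\subset\partial E_\star\\ \Gamma\in\Gamma^I_h}} |\Gamma|\,V_\Gamma
= \int_{\partial E_\star}\vc{u}_h^m\cdot\nu_\star\ dS(x)
= |E_\star|\,\Div_h\vc{u}_h^m|_{E_\star},
$$
where I use that $\Div_h\vc{u}_h^m$ is constant on $E_\star$ (as $\vc{u}_h^m$ is affine there) and that any faces of $E_\star$ lying on $\partial\Om$ carry zero flux, since the degrees of freedom of $\vc{V}_h(\Om)$ vanish on $\partial\Om$. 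This leaves
$$
\varrho_\star\,\big(1+\Dt\,\Div_h\vc{u}_h^m|_{E_\star}\big)\ \ge\ \varrho_h^{m-1}|_{E_\star}\ \ge\ \min_\Om\varrho_h^{m-1}.
$$

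To conclude, I would estimate $\Div_h\vc{u}_h^m|_{E_\star}\le\|\Div_h\vc{u}_h^m\|_{L^\infty(\Om)}$ and divide by the strictly positive number $1+\Dt\,\|\Div_h\vc{u}_h^m\|_{L^\infty(\Om)}$, obtaining the asserted bound; the positivity of $\varrho_h^m$ then follows at once once $\varrho_h^{m-1}>0$. (That \eqref{FEM:contequation} indeed determines a unique such $\varrho_h^m$ can be read off the same computation: with $\vc{u}_h^m$ fixed it is a linear system whose matrix has positive diagonal, nonpositive off-diagonal entries, and $E$-th row sum $|E|(1+\Dt\,\Div_h\vc{u}_h^m|_E)$, hence is an $M$-matrix --- with nonnegative, and for a connected mesh positive, inverse --- as soon as $\Dt\,\|\Div_h\vc{u}_h^m\|_{L^\infty(\Om)}<1$.) The one place that really needs care is the second paragraph: keeping the globally fixed face normal $\nu$ apart from the outward normal $\nu_\star$ of $E_\star$, and recognising that it is precisely the upwind choice of density traces that orients the neighbour terms so as to produce a minimum principle.
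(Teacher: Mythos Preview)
Your direct minimum-principle argument is correct and self-contained. The paper takes a shorter route: it notes that the pair $(\varrho_h^m,\widetilde{\vc u_h^m})$, with $\widetilde{\vc u_h^m}=\Pi_h^{\mathcal N}\vc u_h^m$ the Nedelec interpolant, satisfies the continuity scheme \eqref{FEM:oldcontequation} analysed in \cite{Karlsen1}, and then simply invokes Lemma~4.1 of that reference (together with $\Div\widetilde{\vc u_h^m}=\Div_h\vc u_h^m$). Your computation is essentially what lies behind that cited lemma; the paper's route buys brevity and reuse of earlier work, while yours makes the mechanism (upwinding $\Rightarrow$ M-matrix $\Rightarrow$ minimum principle) explicit within the present paper.

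One small point to tighten. In the final step you pass from $\varrho_\star\bigl(1+\Delta t\,\Div_h\vc u_h^m|_{E_\star}\bigr)\ge\min_\Om\varrho_h^{m-1}$ to $\varrho_\star\bigl(1+\Delta t\,\|\Div_h\vc u_h^m\|_{L^\infty}\bigr)\ge\min_\Om\varrho_h^{m-1}$; this replacement goes in the stated direction only if $\varrho_\star\ge 0$, which you have not yet secured at that point. Your parenthetical M-matrix remark supplies it, but in fact the M-matrix property holds without the smallness condition you impose: the \emph{column} sums of the system matrix equal $|E|>0$ (this is exactly mass conservation, $\phi_h\equiv 1$ in \eqref{FEM:contequation}), so the transpose is strictly diagonally dominant and the matrix is an M-matrix for any $\Delta t$. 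Once $\varrho_\star\ge 0$ is in hand, your concluding inequality and the positivity assertion follow as written.
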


\begin{proof}
Let $\widetilde{\vu_h^m}$ be given by \eqref{eq:nedelec}. 
Then, since $(\vr_h^m,\widetilde{\vu_h^m})$ satisfies \eqref{FEM:oldcontequation},
Lemma 4.1 in \cite{Karlsen1} can be applied. This concludes the proof.
\end{proof}

\begin{lemma}
For each fixed $h > 0$, there exists a solution 
$$
(\varrho^m_{h},\vc{u}^m_{h}) 
\in Q_{h}(\Omega)
\times \vc{V}_{h}(\Omega), \quad 
\vrho^m_h(\cdot)>0, \quad m=1,\dots,M,
$$
to the discrete problem posed in Definition \ref {def:num-scheme}. 
\end{lemma}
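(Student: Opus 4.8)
The plan is to argue by induction on the time level $m$, reducing the assertion to the following one-step problem: given $\vrho_h^{m-1}\in Q_h(\Om)$ with $\vrho_h^{m-1}(\cdot)>0$, produce $(\vrho_h^m,\vc{u}_h^m)\in Q_h(\Om)\times\vc{V}_h(\Om)$ with $\vrho_h^m(\cdot)>0$ solving \eqref{FEM:contequation}--\eqref{FEM:momentumeq}. The base case $m=0$ is immediate since $\vrho_h^0$ is prescribed and positive. For the one-step problem I would build a continuous self-map of a large closed ball in the finite-dimensional space $\vc{V}_h(\Om)$ and invoke Brouwer's fixed point theorem.

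First I would treat the continuity solve. For fixed $\vc{u}_h\in\vc{V}_h(\Om)$, equation \eqref{FEM:contequation} is a square linear system for the coefficients of $\vrho_h\in Q_h(\Om)$ whose matrix depends continuously on $\vc{u}_h$ through the averaged upwind velocities $(\vc{u}_h\cdot\nu)_h^{\pm}$ of \eqref{eq:velocity-average}. Because of the upwinding this matrix is an $M$-matrix, hence invertible; equivalently, its kernel is trivial, since a solution corresponding to data $\vrho_h^{m-1}\equiv 0$ must, by the estimate of Lemma \ref{lemma:vrho-props} applied to both $\vrho_h$ and $-\vrho_h$, vanish identically. Thus the system is uniquely solvable and defines a continuous map $\vc{u}_h\mapsto\vrho_h=:S_h(\vc{u}_h)$. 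Taking $\phi_h\equiv 1$ in \eqref{FEM:contequation} gives mass conservation $\int_\Om S_h(\vc{u}_h)\,dx=\int_\Om\vrho_h^{m-1}\,dx=:m_0>0$, while Lemma \ref{lemma:vrho-props} gives $S_h(\vc{u}_h)(\cdot)>0$. Writing $S_h(\vc{u}_h)=\sum_{E\in E_h}c_E\mathbf{1}_E$ with $c_E>0$ and $\sum_E c_E|E|=m_0$, one gets $c_E\le m_0/\min_{E'\in E_h}|E'|$ for every $E$; hence $\norm{S_h(\vc{u}_h)}_{L^\infty(\Om)}$, and therefore $\norm{p(S_h(\vc{u}_h))}_{L^2(\Om)}$, is bounded by a constant depending only on $h$ and $m_0$ — crucially, independent of $\vc{u}_h$.

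Next comes the momentum solve. For fixed $\vrho_h\in Q_h(\Om)$ the left-hand side of \eqref{FEM:momentumeq} is a symmetric bilinear form $a_h(\cdot,\cdot)$ on $\vc{V}_h(\Om)$; since $\mu>0$ and $\mu+\lambda>0$ (which holds in both $N=2$ and $N=3$ under $N\lambda+2\mu\ge 0$), comparison with \eqref{eq:norm} gives $a_h(\vc{v}_h,\vc{v}_h)\ge\min(\mu,\mu+\lambda)\,|\vc{v}_h|_{\vc{V}_h}^2$, and the discrete Poincar\'e inequality \eqref{eq:poincare} upgrades this to coercivity in the full norm $\norm{\cdot}_{\vc{V}_h}$. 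The right-hand side $\vc{v}_h\mapsto\int_\Om\vc{f}_h^m\vc{v}_h\,dx+\int_\Om p(\vrho_h)\Div_h\vc{v}_h\,dx$ is a bounded linear functional of $\vc{v}_h$, so \eqref{FEM:momentumeq} is uniquely solvable (Lax--Milgram, or simply invertibility of a positive definite matrix), with solution depending continuously on the data, hence on $\vrho_h$. Composing, the map $\mathcal{T}_h$ sending $\vc{u}_h$ first to $S_h(\vc{u}_h)$ and then to the solution of \eqref{FEM:momentumeq} with that density is continuous from $\vc{V}_h(\Om)$ into itself; testing \eqref{FEM:momentumeq} with $\mathcal{T}_h(\vc{u}_h)$ and using coercivity together with the $\vc{u}_h$-independent bound on $\norm{p(S_h(\vc{u}_h))}_{L^2(\Om)}$ shows $\norm{\mathcal{T}_h(\vc{u}_h)}_{\vc{V}_h}\le R$ for some $R$ depending only on $h$, $m_0$ and $\norm{\vc{f}_h^m}_{L^2(\Om)}$. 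Consequently $\mathcal{T}_h$ maps the closed ball $\overline B_R\subset\vc{V}_h(\Om)$ — a compact convex subset of a finite-dimensional space — continuously into itself, and Brouwer's theorem yields a fixed point $\vc{u}_h^m$; setting $\vrho_h^m:=S_h(\vc{u}_h^m)$ gives a solution of \eqref{FEM:contequation}--\eqref{FEM:momentumeq} with $\vrho_h^m(\cdot)>0$ by Lemma \ref{lemma:vrho-props}, closing the induction.

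The step I expect to be the main obstacle is the well-posedness and continuous dependence of the continuity solve, together with the bound on the resulting density that is uniform in $\vc{u}_h$: this is exactly what makes $\mathcal{T}_h$ a genuine self-map rather than merely having bounded fixed points. It is handled above by combining the $M$-matrix structure of the upwind discretization with mass conservation and the pointwise positivity guaranteed by Lemma \ref{lemma:vrho-props}, both valid on the fixed mesh $E_h$.
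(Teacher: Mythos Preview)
Your argument is correct and self-contained. The paper's own proof is different in flavour: it invokes a topological degree argument, deferring essentially all details to \cite[Lemma~4.2]{Karlsen1}, and only spells out the one ingredient that is new here, namely coercivity of the bilinear form $a_h(\cdot,\cdot)$ on $\vc{V}_h$ via the Poincar\'e inequality \eqref{eq:poincare}. Your approach instead builds an explicit continuous self-map of a ball in $\vc{V}_h$ and applies Brouwer directly; the extra idea that makes this work is your observation that positivity (Lemma~\ref{lemma:vrho-props}) together with mass conservation forces $\|S_h(\vc{u}_h)\|_{L^\infty(\Om)}\le m_0/\min_E|E|$ \emph{uniformly in} $\vc{u}_h$, so that the pressure contribution to the right-hand side of \eqref{FEM:momentumeq} is bounded independently of the input velocity. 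The degree argument in \cite{Karlsen1} achieves the same end via a homotopy to the pressureless linear problem; your route is more elementary and avoids the external reference, at the cost of being specific to this upwind discretization. One small caveat: your claim that $\mu+\lambda>0$ follows from $N\lambda+2\mu\ge 0$ is only a weak inequality when $N=2$; in the borderline case $\mu+\lambda=0$ the $\Div$ term drops from $a_h$, but coercivity in $\|\cdot\|_{\vc{V}_h}$ can still fail. The paper does not address this edge case either, so it is not a defect of your argument relative to the original.
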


\begin{proof}
As in the proof of \cite[Lemma 4.2]{Karlsen1}, the existence of 
a solution is established using a topological degree argument. 
By the arguments corresponding to those in \cite[Lemma 4.2]{Karlsen1}, 
one reduces the problem to proving existence 
of a solution $\vc{u}_h \in \vc{V}_h(\Om)$ to the linear system:
\begin{equation}\label{eq:thelinsys}
	\begin{split}
		a(\vc{u}_h, \vc{v}_h)  
		& := \int_{\Om} \mu\Curl_{h} \vc{u}_{h}\Curl_{h} \vc{v}_{h} 
		+ (\mu + \lambda)\Div_{h} \vc{u}_{h}\Div_{h} \vc{v}_{h}\ dx \\
		&\qquad 
		+\mu \sum_{\Gamma \in \Gamma^I_{h}} 
		h^{\epsilon-1}\int_{\Gamma}\sjump{\vc{u}_{h}\cdot \nu}
		\sjump{\vc{v}_{h}\cdot \nu} 
		+ \sjump{\vc{u}_{h}\times \nu}\sjump{\vc{v}_{h}\times \nu}\ dS(x) \\
		&\qquad\qquad 
		= \int_{\Omega}\vc{g}\vc{v}_{h}\ dx, 
		\quad \forall \vc{v}_h \in \vc{V}_h(\Om),
	\end{split}
\end{equation}
where $\vc{g} \in \vc{L}^2(\Om)$ is given. 

Now, the bilinear form $a(\cdot, \cdot)$
is clearly bounded on the space $\vc{V}_h(\Om) \times \vc{V}_h(\Om)$
equipped with the norm $\|\cdot \|_{\vc{V}_h}$. By an application of the
Poincar\'e inequality \eqref{eq:poincare}, we also have the existence of 
a constant $C$, independent of $h$, such that
\begin{equation*}
	a(\vu_h, \vu_h) \geq C \|\vu_h\|_{\vc{V}_h}^2.
\end{equation*}
Hence, the bilinear form $a(\cdot, \cdot)$ is coercive on $\vc{V}_h$, and 
the existence of a function $\vu_h \in \vc{V}_h(\Om)$ satisfying \eqref{eq:thelinsys} follows.

Since the remaining part of the topological degree 
argument is very similar to that found in \cite[Lemma 4.2]{Karlsen1}, we 
omit the details.
\end{proof}

\section{Basic estimates}\label{sec:basic}
In this section we establish various a priori estimates for the 
discrete problem given in Definition \ref{def:num-scheme}, including a 
basic energy estimate and a higher integrability 
estimate for the density approximations. 

We begin with a renormalized formulation of 
the continuity method \eqref{FEM:contequation}.
 
\begin{lemma}[Renormalized continuity method]
Fix any $m=1,\ldots,M$ and let $(\varrho_{h}^m,\vc{u}_{h}^{m}) \in Q_{h} 
\times \vc{V}_{h}$ satisfy the continuity method \eqref{FEM:contequation}. 
Then $(\varrho_{h}^m, \vc{u}_{h}^{m})$ also 
satisfies the renormalized formulation
\begin{equation}\label{FEM:renormalized}
	\begin{split}
		&\int_{\Omega}B(\vrho_{h}^m)\phi_{h}\ dx \\
		&\qquad - \Delta t\sum_{\Gamma \in \Gamma^I_{h}} 
		\int_\Gamma \left(B(\vrho^m_-)(\vc{u}^{m}_h \cdot \nu)_h^+ 
		+ B(\vrho^m_+)(\vc{u}^{m}_h \cdot \nu)_h^-\right) \jump{\phi_{h}}_\Gamma\ dx \\
		&\qquad + \Delta t \int_{\Omega}b(\vrho_{h}^m)\Div_h \vc{u}^{m}_{h}\phi_{h}\ dx 
		+  \int_{\Omega}B''(\xi(\vrho_{h}^m, \vrho_{h}^{m-1})) 
		\jump{\vrho_{h}^{m-1}}^2\phi_{h}\ dx  \\
		&\qquad  
		+\Delta t\sum_{\Gamma \in \Gamma^I_{h}}
		\int_{\Gamma}B''(\xi^\Gamma(\vrho^m_{+},\vrho^m_{-}))
		\jump{\vrho^m_{h}}^2_{\Gamma}(\phi_{h})_{-}(\vc{u}_{h}^{m} \cdot \nu)_h^+ \\
		&\qquad \qquad \qquad \qquad\qquad 
		-B''(\xi^\Gamma(\vrho^m_{-},
		\vrho^m_{+}))\jump{\vrho^m_{h}}^2_{\Gamma}(\phi_{h})_{+}
		(\vc{u}_{h}^{m} \cdot \nu)_h^-\ dS(x) \\
		& \qquad\qquad 
		= \int_{\Omega}B(\vrho_{h}^{m-1})\phi_{h}\ dx,
		\qquad \forall \phi_{h} \in Q_{h}(\Omega),
	\end{split}
\end{equation} 
for any $B\in C[0,\infty)\cap C^2(0,\infty)$ with $B(0)=0$ 
and $b(\vrho) :=\vrho B'(\vrho)-B(\vrho)$. 
Given two positive real numbers $a_1$ and $a_2$, we use 
$\xi(a_1,a_2)$ and $\xi^\Gamma(a_1,a_2)$ to denote two corresponding 
numbers between $a_1$ and $a_2$ that 
arise from second order Taylor expansions utilized in the proof.
\end{lemma}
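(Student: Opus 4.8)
The plan is to derive \eqref{FEM:renormalized} directly from the continuity method \eqref{FEM:contequation} by a careful choice of test function together with exact second-order Taylor expansions to account for the discrete nonlinearity. First I would fix $m$ and, for a given $B\in C[0,\infty)\cap C^2(0,\infty)$ with $B(0)=0$, test \eqref{FEM:contequation} with $\phi_h = B'(\vrho_h^m)\psi_h$ for an arbitrary $\psi_h\in Q_h(\Om)$; since $\vrho_h^m$ is piecewise constant and strictly positive (Lemma \ref{lemma:vrho-props}), $B'(\vrho_h^m)\in Q_h(\Om)$ and this is a legitimate test function. The left-hand time-difference term then reads $\int_\Om \Pth(\vrho_h^m) B'(\vrho_h^m)\psi_h\,dx$, and the goal is to convert this into $\Dt^{-1}\int_\Om\big(B(\vrho_h^m)-B(\vrho_h^{m-1})\big)\psi_h\,dx$ plus an error. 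Here one uses the exact Taylor identity
\begin{equation*}
	B(\vrho_h^{m-1}) = B(\vrho_h^m) + B'(\vrho_h^m)(\vrho_h^{m-1}-\vrho_h^m)
	+ \tfrac12 B''(\xi)\jump{\vrho_h^{m-1}}^2,
\end{equation*}
with $\xi=\xi(\vrho_h^m,\vrho_h^{m-1})$ between the two values and $\jump{\vrho_h^{m-1}} = \vrho_h^m-\vrho_h^{m-1}$; rearranging yields exactly the $B''$-volume term appearing in \eqref{FEM:renormalized} (note $(\vrho_h^m-\vrho_h^{m-1})B'(\vrho_h^m) = B(\vrho_h^m)-B(\vrho_h^{m-1}) + \tfrac12 B''(\xi)\jump{\vrho_h^{m-1}}^2$, up to the sign bookkeeping that the lemma absorbs into the definition of $\xi$).

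Next I would treat the flux term on the right of \eqref{FEM:contequation}, now with $\jump{\phi_h}_\Gamma = \jump{B'(\vrho_h^m)\psi_h}_\Gamma$. On each internal face $\Gamma$ write the upwind flux $\varrho_-^m(\vu_h^m\cdot\nu)_h^+ + \varrho_+^m(\vu_h^m\cdot\nu)_h^-$ and expand the jump of the product using $\jump{fg}_\Gamma = f_-\jump{g}_\Gamma + g_+\jump{f}_\Gamma$ (or the symmetric variant). The ``physical'' part reassembles into the face term $\sum_\Gamma\int_\Gamma\big(B(\vrho_-^m)(\vu_h^m\cdot\nu)_h^+ + B(\vrho_+^m)(\vu_h^m\cdot\nu)_h^-\big)\jump{\psi_h}_\Gamma$, again via the Taylor expansion $B(\vrho_\pm^m) = B(\vrho_\mp^m) + B'(\vrho_\mp^m)(\vrho_\pm^m-\vrho_\mp^m) + \tfrac12 B''(\xi^\Gamma)\jump{\vrho_h^m}_\Gamma^2$; the remainder produces the two face terms with $B''(\xi^\Gamma(\vrho_+^m,\vrho_-^m))$ and $B''(\xi^\Gamma(\vrho_-^m,\vrho_+^m))$ weighted by $(\psi_h)_\mp$ and $(\vu_h^m\cdot\nu)_h^\pm$ respectively. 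The leftover term of the form $\sum_\Gamma\int_\Gamma B(\vrho_h^m)\jump{\ldots}$ combined with a discrete integration by parts (Green's formula on $Q_h$, i.e. $\sum_\Gamma\int_\Gamma \jump{f}_\Gamma g_{\mathrm{up}} = -\sum_E\int_E f\,\Div_h(\cdot)$ type rearrangement) produces the volume term $\Dt\int_\Om b(\vrho_h^m)\Div_h\vu_h^m\psi_h\,dx$, where the identity $b(\vrho)=\vrho B'(\vrho)-B(\vrho)$ is what makes the coefficient come out right; this mirrors the computation in \cite{Karlsen1} via the correspondence \eqref{FEM:oldcontequation} with the Nedelec velocity $\widetilde{\vu_h^m}$, for which $\Div\widetilde{\vu_h^m}=\Div_h\vu_h^m$.

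The main obstacle is the bookkeeping of the discrete flux expansion: one must split $\jump{B'(\vrho_h^m)\psi_h}_\Gamma$ correctly (choosing which factor carries the ``$+$'' and which the ``$-$'' trace so that the upwind direction and the Taylor-remainder signs are consistent), and then verify that the ``physical'' pieces genuinely reassemble into $B$-weighted upwind fluxes plus the $b(\vrho)\Div_h\vu_h^m$ term — this is where the algebraic identity $B(\vrho)+b(\vrho)=\vrho B'(\vrho)$ is used and where sign errors are easy to make. Once the two Taylor expansions (one in time, one across faces) are installed and the discrete integration by parts is carried out, collecting terms gives \eqref{FEM:renormalized}; since $\psi_h\in Q_h(\Om)$ was arbitrary and we may also relabel $\psi_h$ as $\phi_h$, the identity holds for all $\phi_h\in Q_h(\Omega)$, completing the proof.
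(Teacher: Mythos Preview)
Your proposal is correct and follows essentially the same strategy as the paper: test \eqref{FEM:contequation} (equivalently \eqref{FEM:oldcontequation}) with $B'(\vrho_h^m)\phi_h$ and use second-order Taylor expansions in time and across faces to produce the $B''$--remainder terms and the $b(\vrho_h^m)\Div_h\vu_h^m$ volume term. The only difference is presentational: the paper does not spell out the flux/Taylor bookkeeping but instead invokes the Nedelec correspondence \eqref{eq:nedelec}--\eqref{eq:divagree} and cites Lemma~5.1 of \cite{Karlsen1}, where exactly the computation you outline is carried out.
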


\begin{proof}
Recall the definition of $\widetilde{\vu_h^m}$, cf.~\eqref{eq:nedelec}. 
By taking $B'(\vr_h^m)\phi_h$ as test function in \eqref{FEM:oldcontequation} and 
repeating the proof of Lemma 5.1 in \cite{Karlsen1}, we 
obtain \eqref{FEM:renormalized} with $\vu_h^m$ replaced by $\widetilde{\vu_h^m}$.  
In view of \eqref{eq:normalsagree} and \eqref{eq:divagree}, this 
is identical to \eqref{FEM:renormalized}.
\end{proof}

\begin{lemma}[Stability]\label{lemma:stability} \solutiontext 
For $\vrho> 0$, set $P(\vrho):=\frac{a}{\gamma-1}\vrho^\gamma$.
For any $m=1,\dots,M$, we have 
\begin{equation}\label{eq:stabilityincerrors}
	\begin{split}
		&\int_{\Omega} P(\varrho_{h}^m)\ dx 
		+\frac{\mu}{2}\sum_{k=1}^m \Delta t 
		\|\vc{u}_{h}^k\|_{\vc{V}_{h}(\Omega)}^2 
		+\mathcal{N}^m_{\text{diffusion}} \\
		& \qquad 
		\leq \int_{\Omega} P(\varrho_{0}) \ dx 
		+C\sum_{k=1}^m\Delta t\|\vc{f}_{h}^k\|_{\vc{L}^2(\Om)}^2,
	\end{split}
\end{equation}
where the numerical diffusion term 
$\mathcal{N}^m_{\text{diffusion}}\ge 0$ takes the form
\begin{align*}
	\mathcal{N}^m_{\text{diffusion}} 
	& =\sum_{k=1}^{m}\int_{\Omega} 
	P''(\xi(\varrho_{h}^k,\varrho_{h}^{k-1}))\jump{\varrho^{k-1}_{h}}^2\ dx 
	\\ & \qquad +\sum_{k=1}^{m}\sum_{\Gamma \in \Gamma^I_{h}}
	\Delta t\int_{\Gamma}P''(\varrho^k_{\dagger})\jump{\varrho^k_{h}}_{\Gamma}^2
	\left((\vc{u}^k_{h} \cdot \nu)_h^+ 
	-(\vc{u}^k_{h} \cdot \nu)_h^-\right) \ dS(x).
\end{align*}
In particular, $\varrho_{h}\inb L^\infty(0,T;L^\gamma(\Omega))$.
\end{lemma}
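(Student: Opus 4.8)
The plan is to derive the stability estimate by testing the momentum scheme \eqref{FEM:momentumeq} with $\vc{v}_h = \vc{u}_h^m$ and the renormalized continuity scheme \eqref{FEM:renormalized} with the choice $B(\vrho) = P(\vrho) = \frac{a}{\gamma-1}\vrho^\gamma$ and $\phi_h \equiv 1$, then summing over time steps and combining. With this $B$, one computes $b(\vrho) = \vrho P'(\vrho) - P(\vrho) = a\vrho^\gamma = p(\vrho)$, so the term $\Delta t\int_\Omega b(\vrho_h^m)\Div_h\vc{u}_h^m\,dx$ appearing in \eqref{FEM:renormalized} becomes exactly $\Delta t\int_\Omega p(\vrho_h^m)\Div_h\vc{u}_h^m\,dx$. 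Taking $\phi_h = 1$ kills the convective flux term (since $\jump{1}_\Gamma = 0$) as well as the two face terms carrying $(\phi_h)_\pm$; the telescoping term $\int_\Omega B(\vrho_h^m) - B(\vrho_h^{m-1})\,dx$ survives, as does the volume diffusion term $\int_\Omega P''(\xi(\vrho_h^m,\vrho_h^{m-1}))\jump{\vrho_h^{m-1}}^2\,dx$. However, with $\phi_h = 1$ the face-diffusion term in \eqref{FEM:renormalized} also vanishes, so to recover the face part of $\mathcal{N}^m_{\mathrm{diffusion}}$ I must instead keep a nonconstant test function or, more cleanly, redo the discrete renormalization directly: test \eqref{FEM:contequation} (equivalently \eqref{FEM:oldcontequation}) with $\phi_h = P'(\vrho_h^m)$, Taylor-expand $P(\vrho_h^{m-1}) - P(\vrho_h^m)$ and the upwind flux differences to second order, which produces precisely the two remainder terms with $\xi$ and $\xi^\Gamma$ written in the statement, and upwinding guarantees the sign of the face term (the factor $(\vc{u}_h^k\cdot\nu)_h^+ - (\vc{u}_h^k\cdot\nu)_h^- \ge 0$).

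The key steps, in order: (i) establish the discrete energy identity by testing \eqref{FEM:momentumeq} with $\vc{u}_h^m$, obtaining $\int_\Omega \mu|\Curl_h\vc{u}_h^m|^2 + (\mu+\lambda)|\Div_h\vc{u}_h^m|^2\,dx + \mu\sum_\Gamma h^{\epsilon-1}(\cdots) = \int_\Omega p(\vrho_h^m)\Div_h\vc{u}_h^m + \vc{f}_h^m\vc{u}_h^m\,dx$; (ii) establish the renormalized pressure identity as above, which reads $\int_\Omega P(\vrho_h^m) - P(\vrho_h^{m-1})\,dx + \Delta t\int_\Omega p(\vrho_h^m)\Div_h\vc{u}_h^m\,dx + (\text{nonnegative diffusion remainders at step }m) = 0$; (iii) add $\Delta t$ times (i) to (ii), so the pressure–divergence terms cancel exactly, leaving $\int_\Omega P(\vrho_h^m)\,dx + \Delta t\,\mu|\vc{u}_h^m|^2_{\vc{V}_h} + (\text{step-}m\text{ diffusion}) = \int_\Omega P(\vrho_h^{m-1})\,dx + \Delta t\int_\Omega \vc{f}_h^m\vc{u}_h^m\,dx$; (iv) sum over $k = 1,\dots,m$ to telescope the $\int P$ terms down to $\int_\Omega P(\vrho_h^0)\,dx$ and accumulate the diffusion terms into $\mathcal{N}^m_{\mathrm{diffusion}}$; (v) absorb the forcing term by Cauchy–Schwarz and the Poincaré inequality \eqref{eq:poincare}: $\Delta t\int_\Omega \vc{f}_h^k\vc{u}_h^k\,dx \le \Delta t\|\vc{f}_h^k\|_{\vc{L}^2}\|\vc{u}_h^k\|_{\vc{L}^2} \le \Delta t\|\vc{f}_h^k\|_{\vc{L}^2}\,C|\vc{u}_h^k|_{\vc{V}_h} \le C\Delta t\|\vc{f}_h^k\|_{\vc{L}^2}^2 + \tfrac{\mu}{2}\Delta t|\vc{u}_h^k|^2_{\vc{V}_h}$, the last term being absorbed into the left side; (vi) note $\int_\Omega P(\vrho_h^0)\,dx \to \int_\Omega P(\vrho_0)\,dx$ (or is bounded uniformly) by the convergence of the initial data in Definition \ref{def:num-scheme}, which upgrades the right side to the stated form; and (vii) conclude $\varrho_h \inb L^\infty(0,T;L^\gamma(\Omega))$ since $P(\vrho) \sim \vrho^\gamma$ and the bound on $\int_\Omega P(\vrho_h^m)\,dx$ is uniform in $m$ and $h$.

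The main obstacle is step (ii): carrying out the discrete renormalization carefully enough to produce the two remainder terms in exactly the form claimed, with the correct sign. Rather than reproving this from scratch, I would lean on the observation \eqref{eq:normalsagree}–\eqref{eq:divagree} that $(\vrho_h^m,\widetilde{\vc{u}_h^m})$ solves the Nedelec-based continuity method \eqref{FEM:oldcontequation}, which is precisely the scheme analyzed in \cite{Karlsen1}; hence the renormalized identity \eqref{FEM:renormalized} (already stated and proved just above the present lemma) applies verbatim, and I only need to specialize it to $B = P$, $b = p$ and read off that the volume remainder is $P''(\xi(\cdot,\cdot))\jump{\vrho_h^{m-1}}^2 \ge 0$ by convexity of $P$, while the face remainders combine — using $(\phi_h)_\pm = P'(\vrho_h^m)_\pm$ is not needed since in the stability estimate one takes $\phi_h$ constant for the telescoping part but extracts the face diffusion from the flux Taylor expansion — into $\Delta t\sum_\Gamma P''(\vrho^k_\dagger)\jump{\vrho_h^k}_\Gamma^2\big((\vc{u}_h^k\cdot\nu)_h^+ - (\vc{u}_h^k\cdot\nu)_h^-\big) \ge 0$, nonnegative because $a^+ - a^- = |a| \ge 0$ and $P'' > 0$; here $\vrho^k_\dagger$ denotes the relevant intermediate value. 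The only genuine bookkeeping is making sure the cancellation in step (iii) is exact — that the coefficient of $\Div_h\vc{u}_h^m$ coming from the pressure term in \eqref{FEM:momentumeq} is $-p(\vrho_h^m)$ and matches $+b(\vrho_h^m) = +p(\vrho_h^m)$ from \eqref{FEM:renormalized} after multiplying the energy identity by $\Delta t$ — which it does by the computation $b(\vrho) = \vrho P'(\vrho) - P(\vrho) = a\vrho^\gamma$.
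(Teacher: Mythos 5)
Your overall strategy is the paper's: test the momentum scheme with $\vc{v}_h=\vc{u}_h^k$, take $B=P$ and $\phi_h\equiv 1$ in the renormalized continuity formulation \eqref{FEM:renormalized} (noting $b(\vrho)=\vrho P'(\vrho)-P(\vrho)=p(\vrho)$), cancel the pressure--divergence terms, sum over $k$, and absorb the forcing via Cauchy--Schwarz and the Poincar\'e inequality \eqref{eq:poincare}. However, there is a misreading in the middle that sends you on an unnecessary detour. You claim that with $\phi_h\equiv 1$ the face-diffusion remainders in \eqref{FEM:renormalized} vanish. They do not: the only face term that carries the factor $\jump{\phi_h}_\Gamma$ (and hence dies for constant $\phi_h$) is the convective upwind flux; the two second-order remainder terms carry the \emph{traces} $(\phi_h)_{\pm}$, which equal $1$, so with $\phi_h\equiv 1$ they survive as
$\Delta t\sum_{\Gamma}\int_\Gamma B''(\xi^\Gamma(\vrho^m_+,\vrho^m_-))\jump{\vrho^m_h}^2_\Gamma(\vc{u}^m_h\cdot\nu)_h^+ - B''(\xi^\Gamma(\vrho^m_-,\vrho^m_+))\jump{\vrho^m_h}^2_\Gamma(\vc{u}^m_h\cdot\nu)_h^-\,dS(x)$,
each piece nonnegative since $(\vc{u}\cdot\nu)_h^+\ge 0\ge(\vc{u}\cdot\nu)_h^-$. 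So the face part of $\mathcal{N}^m_{\text{diffusion}}$ is already delivered by \eqref{FEM:renormalized} with $\phi_h\equiv 1$, and your proposed fix (re-testing with $\phi_h=P'(\vrho_h^m)$ and redoing the Taylor expansions) is not needed — it would merely re-derive the renormalization lemma you are allowed to quote.

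One further point of precision: $\vrho^k_\dagger$ is not ``the relevant intermediate value'' produced by a Taylor expansion. The expansion produces two distinct intermediate points $\xi^\Gamma(\vrho_+,\vrho_-)$ and $\xi^\Gamma(\vrho_-,\vrho_+)$, and the passage to the common factor $P''(\vrho^k_\dagger)$ is a \emph{lower bound}: one sets $\vrho_\dagger=\max\{\vrho_+,\vrho_-\}$ when $1<\gamma\le 2$ and $\vrho_\dagger=\min\{\vrho_+,\vrho_-\}$ when $\gamma\ge 2$, so that $P''(\vrho_\dagger)\le P''(\xi)$ for every $\xi$ between $\vrho_+$ and $\vrho_-$ (since $P''(\vrho)=a\gamma\vrho^{\gamma-2}$ is monotone). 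Only after this minorization do the two terms combine into $P''(\vrho^k_\dagger)\jump{\vrho^k_h}^2_\Gamma\bigl((\vc{u}^k_h\cdot\nu)_h^+-(\vc{u}^k_h\cdot\nu)_h^-\bigr)$, which is consistent with $\mathcal{N}^m_{\text{diffusion}}$ sitting on the left of an inequality rather than in an identity. With these two corrections your argument coincides with the paper's proof.
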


\begin{proof}
Since $P'(\rho)\rho - P(\rho)=p(\rho)$ and $\vrho_h>0$, taking 
$\phi_{h} \equiv 1$ in \eqref{FEM:renormalized} yields
\begin{equation}\label{eq:stabilityeq1}
	\begin{split}
		& \int_{\Omega}P(\varrho_{h}^k)\ dx
		+ \Delta t \int_{\Om}p(\varrho_{h}^k)\Div_{h} \vc{u}^{k}_{h}\ dx 
		+\int_{\Omega}P''(\xi(\varrho_{h}^k, \varrho_{h}^{k-1}))
		\jump{\varrho_{h}^{k-1}}^2 dx \\
		&\qquad +\Delta t\sum_{\Gamma \in \Gamma^I_{h}}
		\int_{\Gamma}P''(\xi^\Gamma (\varrho^m_{+}, \varrho^m_{-}))
		\jump{\varrho^k_{h}}^2_{\Gamma}(\vc{u}_{h}^{k} \cdot \nu)^+_h \\
		&\qquad \qquad\qquad\quad 
		-P''(\xi^\Gamma (\varrho^m_{-}, \varrho^m_{+}))
		\jump{\varrho^k_{h}}^2_{\Gamma}(\vc{u}_{h}^{k} \cdot \nu)^-_h\ dS(x)
		= \int_{\Omega}P(\varrho^{k-1})\ dx.
	\end{split}
\end{equation}
For $k=1,\ldots,M$ and $x\in\bigcup_{\Gamma \in \Gamma^I_{h}}\Gamma$, set
\begin{equation*}
	\vrho_{\dagger}^k(x):=
	\begin{cases}
		\max\{\vrho_{+}^k(x),\vrho_{-}^k(x)\}, & 1 < \gamma \leq 2, \\
		\min\{\vrho_{+}^k(x),\vrho_{-}^k(x)\}, & \gamma \ge 2,
	\end{cases}
\end{equation*}
and note that 
\begin{equation}\label{eq:doublederiv}
	\begin{split}
		&\Delta t\sum_{\Gamma \in \Gamma^I_{h}}
		\int_{\Gamma}P''(\xi^\Gamma (\vrho^m_{+},\vrho^m_{-}))
		\jump{\vrho^k_{h}}^2_{\Gamma}(\vc{u}_{h}^{k} \cdot \nu)_h^+ 
		\\ & \qquad\qquad\quad 
		-P''(\xi^\Gamma (\vrho^m_{-},\vrho^m_{+}))
		\jump{\vrho^k_{h}}^2_{\Gamma}(\vc{u}_{h}^{k} 
		\cdot \nu)^-_h\ dS(x) \\
		& \qquad
		\geq \Delta t \sum_{\Gamma \in
		\Gamma^I_{h}}\int_{\Gamma}P''(\vrho_{\dagger}^k)
		\jump{\vrho^k_{h}}^2_{\Gamma}
		\left((\vc{u}^k_{h} \cdot \nu)_h^+ 
		-(\vc{u}^k_{h} \cdot \nu)_h^-\right)\ dS(x).
	\end{split}
\end{equation}
Next, using $\vc{v}_{h} = \vc{u}^k_{h}$ as test 
function in \eqref{FEM:momentumeq}, we obtain the estimate
\begin{equation}\label{eq:stabilityeq2}
	\begin{split}
		\int_{\Om} p(\varrho_{h}^k)\Div_{h} \vc{u}_{h}^{k}\ dx 
		&= (\mu+\lambda)\|\Div_{h} \vc{u}_{h}^k\|_{L^2(\Om)}^2  
		+ \mu \|\Curl_h \vc{u}_{h}^k\|_{\vc{L}^2(\Om)}^2  
		- \int_{\Omega}\vc{f}_{h}^k\vc{u}^k_{h}\ dx \\
		&\qquad\qquad 
		+ \mu\sum_{\Gamma \in \Gamma^I_{h}}h^{\epsilon-1}
		\int_{\Gamma}\sjump{\vc{u}^k_{h}\cdot \nu}^2+ \sjump{\vc{u}^k_{h}\times \nu}^2\ dS(x) \\
		&\geq \mu\|\vc{u}_{h}^k\|_{\vc{V}_{h}}^2 
		-\int_{\Omega}\vc{f}_{h}^k\vc{u}^k_{h}\ dx, 
		\quad k=1, \dots, M.
	\end{split}
\end{equation}
Applying \eqref{eq:stabilityeq2} and \eqref{eq:doublederiv} to \eqref{eq:stabilityeq1} 
leads to the bound
\begin{equation*}
	\begin{split}
		&\int_{\Omega} P(\varrho_{h}^k)\ dx 
		+ \mu\Delta t \|\vc{u}_{h}^k\|_{\vc{V}_{h}(\Omega)}^2 
		+ \int_{\Omega} P''(\xi(\varrho_{h}^k,\varrho_{h}^{k-1}))
		\jump{\varrho^{k-1}_{h}}^2\ dx \\
		& \qquad \quad\quad
		+ \sum_{\Gamma \in \Gamma^I_{h}}\Delta t
		\int_{\Gamma}P''(\varrho^k_{\dagger})\jump{\varrho^k_{h}}_{\Gamma}^2
		\left((\vc{u}^k_{h} \cdot \nu)_h^+ -(\vc{u}^k_{h} \cdot \nu)_h^-\right)\ dS(x) \\
		& \quad \leq \int_{\Omega} P(\varrho_{h}^{k-1}) \ dx  
		+ \frac{1}{2\mu}\Delta t \int_{\Omega}|\vc{f}_{h}^k|^2dx 
		+\frac{\mu}{2}\Delta t \int_\Om |\vu_h^k|^2 \ dx.
	\end{split}
\end{equation*}
Summing over $k=1,\ldots,M$ 
yields \eqref{eq:stabilityincerrors}.
\end{proof} 

Since the stability estimate only provides the bound 
$p(\vrho_{h}) \in_{b} L^\infty(0,T;L^1(\Omega))$,
it is not clear that $p(\vrho_{h})$ converges 
weakly to an integrable function. Hence, we shall next 
establish that the pressure is in fact uniformly bounded 
in $L^2(0,T;L^2(\Omega))$.

To increase the readability we introduce the notation
$$
\avg{\phi} = \frac{1}{|\Om|}\int_\Om \phi \ dx.
$$


\begin{lemma}[Higher integrability on the pressure]\label{lemma:higherorderpressure} 
\solutiontext Then
$$
p(\varrho_{h}) \in_{b} 
L^{2}((0,T) \times \Omega).
$$
\end{lemma}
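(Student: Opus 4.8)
The plan is to test the discrete momentum equation \eqref{FEM:momentumeq} with a discrete Bogovskii lift of the pressure, exploiting the commuting property \eqref{eq:commute} so that the pressure term reproduces $\norm{p(\varrho_h^m)}_{L^2(\Om)}^2$, and then to close a self-improving quadratic inequality by means of the energy estimate of Lemma \ref{lemma:stability}. Throughout, $C$ is a constant independent of $h$. Fix $m\in\{1,\dots,M\}$ and set $q_h^m:=p(\varrho_h^m)-\avg{p(\varrho_h^m)}$; since $\varrho_h^m$ is piecewise constant, $q_h^m\in L^2_0(\Om)$, so the lift $\Bop{q_h^m}\in\vc{W}^{1,2}_0(\Om)$ is well defined. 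Using $\vc{v}_h:=\Pi_h^V\Bop{q_h^m}\in\vc{V}_h(\Om)$ as test function in \eqref{FEM:momentumeq}, the relation \eqref{eq:commute} and the fact that $\Pi_h^Q$ is the identity on piecewise constants give $\Div_h\vc{v}_h=\Pi_h^Q\Div\Bop{q_h^m}=\Pi_h^Q q_h^m=q_h^m$, so solving \eqref{FEM:momentumeq} for the pressure contribution yields
\begin{equation*}
	\norm{p(\varrho_h^m)}_{L^2(\Om)}^2-|\Om|\,\avg{p(\varrho_h^m)}^2=\int_\Om p(\varrho_h^m)\,q_h^m\,dx=\int_\Om\mu\Curl_h\vc{u}_h^m\Curl_h\vc{v}_h+(\mu+\lambda)\Div_h\vc{u}_h^m\,q_h^m\,dx+\mathcal{J}_h^m-\int_\Om\vc{f}_h^m\vc{v}_h\,dx,
\end{equation*}
where $\mathcal{J}_h^m$ denotes the jump term of \eqref{FEM:momentumeq} evaluated at $(\vc{u}_h^m,\vc{v}_h)$. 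By Lemma \ref{lemma:stability} one has $\avg{p(\varrho_h^m)}\le C$, so the left-hand side is at least $\norm{p(\varrho_h^m)}_{L^2(\Om)}^2-C$.

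Next I bound each term on the right by $\norm{q_h^m}_{L^2(\Om)}\le\norm{p(\varrho_h^m)}_{L^2(\Om)}$, uniformly in $h$, using the $W^{1,2}$-stability $\norm{\Grad\Bop{q_h^m}}_{L^2(\Om)}\le C\norm{q_h^m}_{L^2(\Om)}$ of the Bogovskii operator. For the viscous terms, $\norm{\Curl_h\vc{v}_h}_{L^2(\Om)}=\norm{\Pi_h^Q\Curl\Bop{q_h^m}}_{L^2(\Om)}\le\norm{\Grad\Bop{q_h^m}}_{L^2(\Om)}$ and $\norm{\Div_h\vc{v}_h}_{L^2(\Om)}=\norm{q_h^m}_{L^2(\Om)}$, so they contribute at most $C\norm{\vc{u}_h^m}_{\vc{V}_h(\Om)}\norm{q_h^m}_{L^2(\Om)}$; Lemma \ref{lemma:jumpcontrol} with $\vc{w}=\Bop{q_h^m}$ gives $|\mathcal{J}_h^m|\le Ch^{\epsilon/2}\norm{\vc{u}_h^m}_{\vc{V}_h(\Om)}\norm{q_h^m}_{L^2(\Om)}$; and for the force term, $\norm{\vc{v}_h}_{L^2(\Om)}\le\norm{\Bop{q_h^m}}_{L^2(\Om)}+\norm{(\Pi_h^V-I)\Bop{q_h^m}}_{L^2(\Om)}\le C\norm{q_h^m}_{L^2(\Om)}$ by the Poincar\'e inequality for $\Bop{q_h^m}\in\vc{W}^{1,2}_0(\Om)$ and the interpolation estimate of Lemma \ref{lemma:interpolationerror}. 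Combining these yields, for every $m$,
\begin{equation*}
	\norm{p(\varrho_h^m)}_{L^2(\Om)}^2\le C+C\bigl(\norm{\vc{u}_h^m}_{\vc{V}_h(\Om)}+\norm{\vc{f}_h^m}_{L^2(\Om)}\bigr)\norm{p(\varrho_h^m)}_{L^2(\Om)}.
\end{equation*}

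Multiplying by $\Dt$, summing over $m=1,\dots,M$, and applying the Cauchy--Schwarz inequality in the (piecewise constant) time variable together with $\sum_m\Dt\,\norm{\vc{u}_h^m}_{\vc{V}_h(\Om)}^2\le C$ (Lemma \ref{lemma:stability}) and $\sum_m\Dt\,\norm{\vc{f}_h^m}_{L^2(\Om)}^2\le\norm{\vc{f}}_{L^2(\Dom)}^2$ (Jensen and the non-expansiveness of $\Pi_h^Q$), I obtain $Y_h^2\le C+CY_h$ with $Y_h:=\norm{p(\varrho_h)}_{L^2(\Dom)}$ and $C$ independent of $h$. For each fixed $h>0$ the density $\varrho_h$ is strictly positive and piecewise constant on a fixed finite mesh, so $Y_h<\infty$, and the quadratic inequality then forces $Y_h\le C'$ with $C'$ independent of $h$, which is precisely $p(\varrho_h)\inb L^2(\Dom)$. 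I expect the main obstacle to be twofold: checking that the three bounds on the test function $\Pi_h^V\Bop{q_h^m}$ — especially the nonconforming jump contribution $\mathcal{J}_h^m$ via Lemma \ref{lemma:jumpcontrol} — are genuinely $h$-independent, and handling the self-improvement correctly, since the term $CY_h$ cannot be absorbed into $Y_h^2$ before finiteness of $Y_h$ for fixed $h$ has been established, which is why the inequality must be derived for fixed $h$ first and only then made uniform.
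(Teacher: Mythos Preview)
Your proof is correct and follows essentially the same route as the paper: both test \eqref{FEM:momentumeq} with $\Pi_h^V\Bop{p(\varrho_h^m)-\avg{p(\varrho_h^m)}}$, use \eqref{eq:commute} to identify the divergence, bound the jump terms via Lemma \ref{lemma:jumpcontrol}, and close with the Bogovskii $W^{1,2}$-estimate and Lemma \ref{lemma:stability}. The only cosmetic difference is the final absorption: the paper applies Cauchy's inequality with $\epsilon$ at each time level before summing, whereas you sum first and solve the quadratic $Y_h^2\le C+CY_h$; both are equivalent and your care in first noting $Y_h<\infty$ for fixed $h$ is appropriate.
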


\begin{proof}
For $m=1,\ldots, M$, define $\vc{v}_h^m \in \vc{V}_h(\Om)$ by
$$
\vc{v}_{h}^m=\Pi_{h}^V 
\Bop{p(\varrho_{h}^m)-\avg{p(\varrho_{h}^m)}},
$$
where the operator $\Bop{\cdot}$ is defined in \eqref{def:bop}.	

Since $\Div \Pi_h^V = \Pi_h^Q \Div$, we have the identity 
$$
\Div_{h} \vc{v}_{h}^m 
= \Pi_{h}^Q \Div \Bop{p(\varrho_{h}^m)
-\avg{p(\varrho_{h}^m)}} 
=p(\varrho_{h}^m)-\avg{p(\varrho_{h}^m)}. 
$$

By using $\vc{v}_{h}^m$ as test function 
in the velocity method \eqref{FEM:momentumeq} and applying 
the previous identity, we obtain the relation
\begin{equation*}
	\begin{split}
		\int_{\Omega}p(\varrho_{h}^m)^2dx &= 
		|\Omega| \avg{p(\varrho_{h}^m)}^2 
		+  (\lambda + \mu) \int_{\Omega} \Div_{h} \vc{u}_{h}^m \Div_h \vc{v}_h^m dx \\
		&\quad +\int_{\Omega}\mu \Curl_{h} \vc{u}_{h}^m \Curl_{h} \vc{v}_{h}^m
		-\vc{f}_{h}^m \vc{v}_{h}^m dx \\
		&\quad+\mu\sum_{\Gamma \in \Gamma^I_{h}}h^{\epsilon-1}
		\int_{\Gamma}\sjump{\vc{u}^m_{h}\cdot \nu}\sjump{\vc{v}^m_{h}\cdot \nu}
		+\sjump{\vc{u}^m_{h} \times \nu}\sjump{\vc{v}^m_{h}\times \nu}\ dS(x).
	\end{split}
\end{equation*}

Repeated applications of H\"older's inequality yields
\begin{equation*}
	\begin{split}
		&\|p(\varrho_{h}^m)\|_{L^2(\Omega)}^2 \\
		&\quad \leq C\avg{p(\varrho_{h}^m)}^2
		+(\lambda + \mu)\|\Div_{h} \vc{u}_{h}^m\|_{L^2(\Om)}
		\|\Div_h \vc{v}_h^m\|_{L^2(\Om)} \\
		&\qquad + \mu\|\Curl_{h} \vc{u}_{h}^m\|_{\vc{L}^2(\Om)}
		\|\Curl_{h}\vc{v}^m_{h}\|_{\vc{L}^{2}(\Om)} 
		+ \|\vc{f}_{h}^m\|_{\vc{L}^2(\Omega)}
		\|\vc{v}_{h}^m\|_{\vc{L}^2(\Omega)} \\
		&\qquad
		+\mu\left|\sum_{\Gamma \in \Gamma^I_{h}}h^{\epsilon-1}
		\int_{\Gamma}\sjump{\vc{u}^m_{h}\cdot \nu}
		\sjump{\vc{v}^m_{h}\cdot \nu}
		+\sjump{\vc{u}^m_{h} \times \nu}
		\sjump{\vc{v}^m_{h}\times \nu}\ dS(x)\right|.
	\end{split}
\end{equation*}
To bound the jump terms we apply Lemma \ref{lemma:jumpcontrol}:
\begin{align*}
	&\|p(\varrho_{h}^m)\|_{L^2(\Omega)}^2 \\
	&\quad \leq C\Bigl[\avg{p(\varrho_{h}^m)}^2\\
	&\qquad \qquad + \left((1+h^\frac{\epsilon}{2})
	\|\vc{u}^m_{h}\|_{\vc{V}_{h}(\Om)}
	+\|\vc{f}_{h}^m\|_{\vc{L}^2(\Om)}\right)
	\|\Bop{p(\varrho_{h}^m) - \avg{p(\varrho_{h}^m)}}\|_{\vc{W}^{1,2}(\Om)}\Bigr] \\
	&\quad \leq C\left[\avg{p(\varrho_{h}^m)}^2+ 
	\Bigl((1+h^\frac{\epsilon}{2})\|\vc{u}^m_{h}\|_{\vc{V}_{h}(\Om)}
	+ \|\vc{f}_{h}^m\|_{\vc{L}^2(\Om)}\Bigr)
	\left(1+\|p(\varrho_{h}^m)\|_{L^2(\Om)}\right)\right],
\end{align*}
where the last inequality follows thanks to the estimate 
$\|\mathcal{B}[\phi]\|_{\vc{W}^{1,2}(\Omega)} 
\leq C\|\phi\|_{L^2(\Omega)}$.

Finally, an application of Cauchy's inequality (with $\epsilon$) yields
$$
\|p(\varrho_{h}^m)\|_{L^2(\Omega)}^2 
\leq C\left(1+ \avg{p(\varrho_{h}^m)}^2
+\|\vc{u}^m_{h}\|^2_{\vc{V}_{h}(\Om)}
+\|\vc{f}_{h}^m\|^2_{\vc{L}^2(\Om)}\right).
$$
Finally, we multiply this inequality by $\Delta t$, sum 
over $m=1,\ldots,M$, and apply Lemma \ref{lemma:stability}. 
This concludes the proof.
\end{proof}


\section{Convergence}\label{sec:conv}
Let $\Set{(\vr_h, \vu_h)}_{h>0}$ be a sequence of numerical solutions constructed according
to \eqref{eq:num-scheme-II} and Definition \ref{def:num-scheme}. In this section we will prove 
that a subsequence of this sequence converges to a weak solution of 
the semi--stationary Stokes system, thereby 
proving Theorem \ref{theorem:mainconvergence}.

In view of Section \ref{sec:basic}, we have the following 
$h$--independent bounds:
\begin{align*}
	& \vr_h \inb L^\infty(0,T;L^\gamma(\Om))
	\cap L^{2\gamma}((0,T)\times \Om),
	\\ & 
	\vu_h \inb L^2(0,T;\vc{L}^2(\Om)),
	\\ &
	\Div_h \vu_h \inb L^2(0,T;L^2(\Om)), 
	\\ &
	\Curl_h \vu_h \inb L^2(0,T;\vc{L}^2(\Om)).
\end{align*}
Consequently, we can assume that there exist limit functions 
$\vr \in L^\infty(0,T;L^\gamma(\Om))\cap L^{2\gamma}((0,T)\times \Om)$
and $\vu \in L^2(0,T;\vc{W}^{1,2}_0(\Om))$ such that
\begin{equation}\label{eq:conv1}
	\begin{split}
		&\vr_h \weakh \vr \quad \text{in $L^\infty(0,T;L^\gamma(\Om))
		\cap L^{2\gamma}((0,T)\times \Om)$,} \\
		&\vu_h \weakh \vu \quad \text{in $L^2(0,T;\vc{L}^2(\Om))$,}
	\end{split}
\end{equation}
and, by Lemma \ref{lemma:consistency},
\begin{equation}\label{eq:conv2}
	\begin{split}
		&\Div_h \vu_h \weakh \Div \vu \quad \text{in $L^2(0,T;L^2(\Om))$,} \\
		&\Curl_h \vu_h \weakh \Curl \vu \quad \text{in $L^2(0,T;\vc{L}^2(\Om))$.}
	\end{split}
\end{equation}
Moreover, 
\begin{equation*}
	\vrho_h^\gamma \overset{h\to 0}{\weak}\overline{\vrho^\gamma}, 
	\quad 
	\vrho_h^{\gamma+1} 
	\overset{h\to 0}{\weak}
	\overline{\vrho^{\gamma+1}}, 
	\quad
	\vrho_h\log\vrho_h \overset{h\to 0}{\weak} \overline{\vrho\log\vrho},
\end{equation*}
where each $\overset{h\to 0}\weak$ signifies weak convergence 
in a suitable $L^p$ space with $p>1$.

Finally, $\vrho_h$, $\vrho_h\log\vrho_h$ converge 
respectively to $\vrho$, $\overline{\vrho\log\vrho}$ 
in $C([0,T];L^p_{\text{weak}}(\Om))$ for some 
$1<p<\gamma$, cf.~Lemma \ref{lem:timecompactness} 
and also \cite{Feireisl:2004oe,Lions:1998ga}. 
In particular, $\vrho$, $\vrho\log \vrho$, and 
$\overline{\vrho\log\vrho}$ belong to $C([0,T];L^p_{\text{weak}}(\Om))$.

\subsection{Density method}

\begin{lemma}[Convergence of $\varrho \vc{u}$]\label{lem:convergenceofrhou}
Given \eqref{eq:conv1} and \eqref{eq:conv2},
$$
\vr_h \vu_h \weakh \vr \vu 
\quad \text{in the sense of distributions on $(0,T)\times \Om$.}
$$
\end{lemma}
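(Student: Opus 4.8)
The plan is to prove $\vr_h\vu_h\weakh \vr\vu$ in the sense of distributions by invoking the discrete Aubin--Lions lemma, Lemma~\ref{lemma:aubinlions}, with $g_h=\vr_h$ and $f_h=\vu_h$ (componentwise). I must therefore verify the four hypotheses of that lemma. Property (1) is immediate since both $\vr_h$ and $\vu_h$ are piecewise constant in time by construction \eqref{eq:num-scheme-II}. For property (2), I take $p_1=\infty$, $q_1=\gamma$ so that $\vr_h\weakh\vr$ in $L^\infty(0,T;L^\gamma(\Om))$ by \eqref{eq:conv1}; then the conjugate exponents $p_2=1$, $q_2=\gamma'=\gamma/(\gamma-1)$ are needed for $\vu_h$. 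Since $\gamma>1$ we have $\gamma'<\infty$, and the stability bound together with the Poincar\'e-type inequality \eqref{eq:poincare} gives $\vu_h\inb L^2(0,T;\vc{L}^2(\Om))$; an additional argument (see below) upgrades this to the required weak convergence in $L^2(0,T;\vc{L}^{q_2}(\Om))$ after passing to a subsequence, and since $2\ge 1=p_2$ and the time interval is finite this also yields weak convergence in $L^{p_2}(0,T;\vc{L}^{q_2}(\Om))$. A small caveat here: Lemma~\ref{lemma:aubinlions} as stated requires $1<p_1,q_1<\infty$, so strictly one should use $p_1$ large finite rather than $\infty$, which is harmless on the bounded interval $(0,T)$; I will phrase it accordingly.

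The substantive points are properties (3) and (4). For property (3), I need $\Pth\vr_h\inb L^1(0,T;W^{-1,1}(\Om))$. This is exactly the weak time-continuity / consistency estimate for the discrete continuity method, which by the remarks following \eqref{FEM:oldcontequation} carries over from \cite{Karlsen1}: testing \eqref{FEM:contequation} (equivalently \eqref{FEM:oldcontequation}) against $\Pi_h^Q\phi$ for $\phi\in C_0^\infty(\Om)$ and estimating the right-hand side using the stability bounds $\vr_h\inb L^\infty(0,T;L^\gamma)$, the higher integrability $\vr_h\inb L^{2\gamma}((0,T)\times\Om)$ from Lemma~\ref{lemma:higherorderpressure}, and $\vu_h\inb L^2(0,T;\vc{V}_h)$ produces a bound on $\iint \Pth\vr_h\,\phi\,dxdt$ by $C\|\Grad\phi\|_{L^\infty}$, uniformly in $h$; this gives the claimed bound in $L^1(0,T;W^{-1,1}(\Om))$ (indeed in a better space). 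For property (4), I need the spatial translation estimate $\|\vu_h(\cdot)-\vu_h(\cdot-\xi)\|_{L^{p_2}(0,T;\vc{L}^{q_2}(\Om))}\to 0$ as $|\xi|\to 0$ uniformly in $h$. This is precisely the content of the spatial compactness estimate \eqref{eq:compact} in Lemma~\ref{lemma:compactembedding}, which gives $\|\vu_h(\cdot)-\vu_h(\cdot-\xi)\|_{\vc{L}^2(\Om_\xi)}\le C|\xi|^{1/2-\epsilon/4}|\vu_h|_{\vc{V}_h}$; integrating in time and using $\Dt\sum_k|\vu_h^k|_{\vc{V}_h}^2\le C$ from Lemma~\ref{lemma:stability}, together with the zero extension outside $\Om$, yields the uniform modulus of continuity in $L^2(0,T;\vc{L}^2(\Om))$, and hence in $L^{p_2}(0,T;\vc{L}^{q_2}(\Om))$ after interpolating against the uniform $L^\infty(0,T;\vc{L}^2)$-type bound if $q_2>2$, or directly if $q_2\le 2$.

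With (1)--(4) in hand, Lemma~\ref{lemma:aubinlions} gives $\vr_h\vu_h\weakh \vr\vu$ in $\mathcal{D}'((0,T)\times\Om)$, which is the assertion. The main obstacle I anticipate is the bookkeeping around exponents: the natural a priori control on $\vu_h$ is only in $L^2(0,T;\vc{L}^2(\Om))$, whereas the duality in Lemma~\ref{lemma:aubinlions} forces $q_2=\gamma'$; when $\gamma<2$ one has $\gamma'>2$ and $\vu_h$ is not obviously bounded in $L^{q_2}$. The remedy is to observe that the conclusion is only distributional, so it suffices to test against a fixed $\phi\in C_0^\infty$; one localizes and, because $\vr_h\vu_h$ is controlled in $L^1$ via H\"older ($\vr_h\in L^{2\gamma}$, $\vu_h\in L^2$ with $\tfrac{1}{2\gamma}+\tfrac12<1$ giving an $L^r$ bound for some $r>1$ in space-time), one is free to reduce to the case $q_1,q_2$ both finite with $q_1$ close to $1$ and $q_2$ correspondingly large but still dominated by what the bounds on $\vr_h$ and $\vu_h$ supply; equivalently, one applies the lemma with $q_1$ slightly above $1$ so that $q_2$ is finite and $\vu_h\inb L^2(0,T;\vc{L}^2(\Om))\hookrightarrow L^{p_2}(0,T;\vc{L}^{q_2}(\Om))$ holds by the Sobolev-type embedding encoded in \eqref{eq:poincare} and Lemma~\ref{lemma:inverse}. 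Apart from this exponent juggling, every ingredient is already established in Sections~\ref{sec:prelim} and~\ref{sec:basic}, so the proof is essentially an assembly of Lemmas~\ref{lemma:aubinlions}, \ref{lemma:compactembedding}, and~\ref{lemma:stability} together with the consistency estimate inherited from \cite{Karlsen1}.
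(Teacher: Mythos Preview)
Your approach is exactly the paper's: verify hypotheses (3) and (4) of Lemma~\ref{lemma:aubinlions} via the consistency estimate inherited from \cite{Karlsen1} (through the reformulation \eqref{FEM:oldcontequation}) and the translation estimate \eqref{eq:compact}, respectively, then conclude. The paper makes the link to \cite{Karlsen1} explicit by introducing $\widetilde{\vu_h}=\Pi_h^{\mathcal N}\vu_h$ and noting that $\widetilde{\vu_h}\inb L^2(0,T;\vc{W}^{\Div,2}(\Om))$ together with the numerical diffusion bound from Lemma~\ref{lemma:stability} are what is needed to import the $W^{-1,1}$ estimate \eqref{eq:negbound}.

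Your exponent worry is self-inflicted: simply take $p_1=p_2=q_1=q_2=2$. By Lemma~\ref{lemma:higherorderpressure} you have $\vr_h\inb L^{2\gamma}((0,T)\times\Om)\subset L^2((0,T)\times\Om)$ since $\gamma>1$, hence $\vr_h\weakh\vr$ in $L^2(0,T;L^2(\Om))$; and $\vu_h\inb L^2(0,T;\vc L^2(\Om))$ is exactly what stability gives. The translation estimate \eqref{eq:compact} is already stated in $L^2$, so hypothesis (4) holds in $L^2(0,T;\vc L^2(\Om))$ with no interpolation needed. The attempted remedy via \eqref{eq:poincare} and Lemma~\ref{lemma:inverse} would not work anyway (the inverse inequality degrades rather than improves integrability uniformly in $h$), but it is simply unnecessary.
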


\begin{proof}
Denote by $\widetilde{\vu_h}$ the function
$$
\widetilde{\vu_h}(t, \cdot) = \widetilde{\vu_h^m}, \quad t \in (t^{m-1},t^m],~m=1, \ldots, M,
$$
where $\widetilde{\vu_h^m}$ is defined in \eqref{eq:nedelec}.
By standard properties of the Nedelec interpolation operator, 
$\|\widetilde{\vu_h}\|_{L^2(0,T;\vc{L}^2(\Om))} 
\leq C\|\vu_h\|_{L^2(0,T;\vc{L}^2(\Om))}$. 
This, \eqref{eq:divagree}, and Lemma \ref{lemma:stability} 
allow us to conclude that $\widetilde{\vu_h} \inb L^2(0,T;\vc{W}^{\Div,2}(\Om))$ and 
$$
\sum_{m=1}^{M}\sum_{\Gamma \in \Gamma^I_{h}}\Delta t
\int_{\Gamma}P''(\varrho^m_{\dagger})
\jump{\varrho^m_{h}}_{\Gamma}^2
\left|\widetilde{\vc{u}^m_{h}} 
\cdot \nu\right| \ dS(x) \leq C.
$$

Using these bounds, we can apply to \eqref{FEM:oldcontequation} the calculations 
leading to Lemma 5.6 in \cite{Karlsen1}, resulting in the bound
\begin{equation}\label{eq:negbound}
	\Pth (\vr_h) \inb L^1(0,T;W^{-1,1}(\Om)).
\end{equation}
At the same time, Lemma \ref{lemma:compactembedding} tells us that
\begin{equation}\label{eq:translation}
	\|\vc{u}_{h}(t,x) - \vc{u}_{h}(t,x-\xi)\|_{L^2(0,T;\vc{L}^2(\Om_\xi))} 
	\rightarrow 0 \quad 
	\text{as $|\xi|\rightarrow 0$, uniformly in $h$.}
\end{equation}

In view of \eqref{eq:negbound} and \eqref{eq:translation}, an 
application of Lemma \ref{lemma:aubinlions} 
concludes the proof.
\end{proof}

\begin{lemma}[Continuity equation]\label{lemma:convergence-contequation}
The limit pair $(\vr, \vu)$ constructed in \eqref{eq:conv1} and \eqref{eq:conv2} 
is a weak solution of the continuity equation \eqref{eq:contequation} 
in the sense of Definition \ref{def:weak}.
\end{lemma}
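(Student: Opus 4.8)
The plan is to pass to the limit in the discrete continuity method \eqref{FEM:contequation}, using the reformulation \eqref{FEM:oldcontequation} in terms of the Nedelec-interpolated velocity $\widetilde{\vu_h}$, against a fixed test function $\phi \in C^\infty([0,T)\times\overline{\Om})$. First I would choose the discrete test function $\phi_h^m := \Pi_h^Q \phi(t^m,\cdot) \in Q_h(\Om)$ and rewrite \eqref{FEM:oldcontequation} summed over $m$ as a space-time identity. Abel summation (discrete integration by parts in time) moves the difference quotient $\Pth(\vr_h^m)$ onto $\phi$, producing a term $\iint \vr_h \,\Pth^{-}\phi_h\,dxdt$ that converges to $\iint \vr\,\phi_t\,dxdt$ together with a boundary contribution $\int_\Om \vr_h^0 \phi_h^0\,dx \to \int_\Om \vr_0\,\phi|_{t=0}\,dx$ (using $\vr_h^0 \to \vr_0$ in $L^1$ and the convergence $\vr_h \to \vr$ in $C([0,T];L^p_{\mathrm{weak}})$ for the endpoint).

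Next I would treat the convective flux term. Rewrite the upwind face sum as a consistency term plus a ``centered'' term: since $\vr_-^m(\widetilde{\vu_h^m}\cdot\nu)^+ + \vr_+^m(\widetilde{\vu_h^m}\cdot\nu)^- = \tfrac12(\vr_+^m+\vr_-^m)(\widetilde{\vu_h^m}\cdot\nu) - \tfrac12|\widetilde{\vu_h^m}\cdot\nu|\,\jump{\vr_h^m}_\Gamma$ on each $\Gamma$, the first piece is a discrete approximation of $\iint \vr\vu\cdot\Grad\phi$ and the second (numerical viscosity) piece is $O(h^{1/2})$ because the stability estimate in Lemma \ref{lemma:stability} controls $\sum \Dt\int_\Gamma |\jump{\vr_h^m}_\Gamma|^2(\cdots)$ and the face sum of $|\phi - \phi_\Gamma|^2$ is $O(h^2)$ after invoking the trace and interpolation estimates (Lemmas \ref{lemma:toolbox}, \ref{lemma:interpolationerror}). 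For the remaining centered term I would replace $\tfrac12(\vr_+^m+\vr_-^m)$ and the average normal velocity on each face by $\vr_h$ and $\widetilde{\vu_h}$ in a piecewise sense, control the replacement errors by the same jump/consistency bounds, and then pass to the limit using $\vr_h\widetilde{\vu_h} \weakh \vr\vu$ in the sense of distributions — this last fact follows exactly as in Lemma \ref{lem:convergenceofrhou}, since $\widetilde{\vu_h}$ agrees with $\vu_h$ in divergence and normal traces and hence $\widetilde{\vu_h}\weakh\vu$ weakly, while \eqref{eq:negbound} and \eqref{eq:translation} give the compactness needed to apply Lemma \ref{lemma:aubinlions}.

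Collecting the limits yields exactly \eqref{eq:weak-rho}, i.e., $(\vr,\vu)$ is a weak solution of the continuity equation up to the boundary; the regularity $(\vr,\vu)\in L^\infty(0,T;L^\gamma)\times L^2(0,T;\vc{W}^{1,2}_0)$ is already recorded in \eqref{eq:conv1}--\eqref{eq:conv2}. The main obstacle is the careful bookkeeping of the consistency errors in the flux term: one must verify that replacing the face averages of $\vr_h$ and the normal velocity by the bulk piecewise-constant values, and replacing $\jump{\phi_h}_\Gamma$ by $\Grad\phi$, all produce errors that vanish as $h\to0$ uniformly — this relies crucially on the scaling $\Dt = ch$, the higher integrability of $\vr_h$ from Lemma \ref{lemma:higherorderpressure} (to handle products like $\vr_h\jump{\vu_h}$ in $L^1$), and the $h^{\epsilon-1}$-weighted jump control built into $\|\cdot\|_{\vc{V}_h}$. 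Since the discrete continuity method \eqref{FEM:oldcontequation} is verbatim the scheme analyzed in \cite{Karlsen1}, I would in fact invoke the convergence argument there directly rather than repeat it, noting that the weak limits of $\vr_h$, $\vu_h$, and $\widetilde{\vu_h}$ coincide with those obtained here.
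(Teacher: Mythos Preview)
Your proposal is correct and follows essentially the same route as the paper: rewrite the scheme via \eqref{FEM:oldcontequation}, invoke the consistency calculation from \cite{Karlsen1} to obtain $\int_0^T\!\!\int_\Om \Pth(\vr_h)\phi_h = \int_0^T\!\!\int_\Om \vr_h\widetilde{\vu_h}\cdot\Grad\phi + O(h^{1/2})$, and pass to the limit. The only point where the paper differs from your sketch is in identifying the limit of $\vr_h\widetilde{\vu_h}$: rather than applying Lemma~\ref{lemma:aubinlions} directly to the pair $(\vr_h,\widetilde{\vu_h})$ --- which is delicate because the translation estimate \eqref{eq:translation} is proved for $\vu_h\in\vc{V}_h$, not for $\widetilde{\vu_h}\in\mathcal{N}_h$ --- the paper writes $\vr_h\widetilde{\vu_h}=\vr_h\vu_h+\vr_h(\widetilde{\vu_h}-\vu_h)$, uses Lemma~\ref{lem:convergenceofrhou} on the first piece, and bounds the second by $Ch^{(2-\epsilon)/4}$ via the Nedelec interpolation error, Lemma~\ref{lem:blowup}, and the higher integrability of $\vr_h$.
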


\begin{proof}
Denote by $\widetilde{\vu_h}$ the function
$$
\widetilde{\vu_h}(t, \cdot) = \widetilde{\vu_h^m}, 
\quad t \in (t^{m-1},t^m], \quad m=1, \ldots, M,
$$
where $\widetilde{\vu_h^m}$ is defined in \eqref{eq:nedelec}. 

Fix a test function $\phi \in C_{0}^\infty([0,T)\times\cOm)$ and 
introduce the piecewise constant projections 
$\phi_{h}:=\Pi_{h}^Q \phi$, $\phi_{h}^m:= \Pi_{h}^Q \phi^m$, and
$\phi^m:=\frac{1}{\Delta t}\int_{t^{m-1}}^{t^m} \phi(t,\cdot)\ dt$.

By using $\phi_h^m$ as test function in \eqref{FEM:oldcontequation} and 
preforming the same calculations as in the proof of Lemma 6.4 in \cite{Karlsen1},
we work out the identity
\begin{equation}\label{eq:seherja}
	\begin{split}
		&\int_0^T\int_{\Omega} \Pth(\vr_h)\phi_{h}\ dxdt 
		= \int_0^T\int_{\Omega}
		\vrho_{h}\widetilde{\vc{u}_{h}} 
		\Grad\phi\ dxdt + \omega(h),
	\end{split}
\end{equation}
where $|\omega(h)| \leq Ch^\frac{1}{2}$ and 
\begin{equation*}
	\begin{split}
		\int_0^T\int_{\Omega} \Pth(\vr_h)\phi_{h}\ dxdt 
		\toh -\int_{0}^T\int_{\Omega}\vrho \phi_{t}\ dxdt 
		- \int_{\Omega}\vrho_{0}\phi(0,x)\ dx,
	\end{split}
\end{equation*}
where we have relied on \eqref{eq:conv1} and the strong 
convergence $\vr_h^0 \rightarrow \vr^0$ a.e.~in $\Om$.

Next,
$$
\int_0^T\int_{\Omega}\vrho_{h}\widetilde{\vc{u}_{h}} \Grad\phi \ dxdt
= \int_0^T\int_{\Omega}\vrho_{h}\vc{u}_{h} \Grad\phi
+\vr_h \left(\widetilde{\vu_h}-\vu_h \right)\Grad \phi \ dxdt.
$$
In view of Lemma \ref{lem:convergenceofrhou},
$$
\int_0^T \int_{\Omega} \vrho_{h}\vc{u}_{h} \Grad\phi \ dxdt
\toh \int_0^T\int_{\Omega} \vrho \vc{u}  \Grad \phi \ dxdt.
$$

By a standard error estimate for $\Pi_h^W$ (cf.~\cite{Nedelec:1980ec}),
\begin{align*}
	&\left|\int_0^T\int_\Om\vr_h 
	\left(\widetilde{\vu_h}-\vu_h \right)\Grad \phi~dxdt\right|  \\
	&\quad \leq hC \|\vr_h\|_{L^2(0,T;L^2(\Om))}
	\|\Grad_h \vu_h\|_{L^2(0,T;L^2(\Om))}
	\|\Grad \phi\|_{L^\infty(0,T;L^\infty(\Om))}
	\leq C h^{\frac{2-\epsilon}{4}},
\end{align*}
where the final inequality follows 
from Lemmas \ref{lem:blowup}, \ref{lemma:stability}, 
and \ref{lemma:higherorderpressure}.

Summarizing, sending $h \rightarrow 0$ in \eqref{eq:seherja} delivers 
the desired result \eqref{eq:weak-rho}.
\end{proof}

\subsection{Strong convergence of density approximations}\label{sec:strong-conv-of-vrho}
To establish the strong convergence of the density approximations $\vr_h$, 
we will utilize a weak continuity property of the effective viscous flux:
$
\eff(\vrho_{h},\vc{u}_{h})  = p(\vr_h) - (\lambda + \mu)\Div \vu_h.
$

To derive this property we exploit the div--curl structure of the velocity scheme \eqref{FEM:momentumeq}
combined with the commutative properties \eqref{eq:commute} of $\vc{V}_h$.
More specifically, in view of the commutative property \eqref{eq:commute},
the function $\vc{v}_h = \Pi_h^V\Grad \Delta^{-1}\vr_h$ satisfies
$\Div_h \vc{v}_h = \vr_h$ and $\Curl_h \vc{v}_h = 0$ on elements away from 
the boundary. The crucial point is that the curl part 
of the velocity method \eqref{FEM:momentumeq} vanishes 
when this $\vc{v}_h$ is utilized as a test function.

\begin{lemma}[Discrete effective viscous flux] \label{lemma:effectiveflux} 
Given the weak convergences listed in \eqref{eq:conv1} and \eqref{eq:conv2},
$$
\lim_{h \to 0}\int_{0}^T\int_{\Omega}
\eff(\vrho_{h},\vc{u}_{h})\, \vrho_{h}\, \phi\psi \ dxds 
=\int_{0}^T\int_{\Omega} 
\overline{\eff(\vrho,\vc{u})}\,\vrho\, \phi\psi \ dxds, 
$$
for all $\phi \in C_0^\infty(\Om)$ and $\psi \in C^\infty(0,T)$.
\end{lemma}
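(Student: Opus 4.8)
The plan is to test the velocity method \eqref{FEM:momentumeq} with the function $\vc{v}_h^m = \Pi_h^V\Aop{\varrho_h^m \phi}$ (suitably handling the $\phi$-localization so as to stay away from the boundary and keep $\Curl_h\vc{v}_h^m$ under control), and then to pass to the limit. By the commutative property \eqref{eq:commute}, $\Div_h\vc{v}_h^m = \Pi_h^Q(\varrho_h^m\phi)$ and, away from $\partial\Om$, $\Curl_h\vc{v}_h^m = 0$. Plugging this test function into \eqref{FEM:momentumeq}, multiplying by $\psi(t)$ and summing the time levels, the curl term vanishes (up to boundary contributions), and the divergence term produces exactly $\iint \left[(\mu+\lambda)\Div_h\vc{u}_h - p(\varrho_h)\right]\varrho_h\phi\psi\,dxdt$, i.e.\ $-\iint \eff(\varrho_h,\vc{u}_h)\,\varrho_h\phi\psi\,dxdt$, modulo commutator errors coming from replacing $\Pi_h^Q(\varrho_h\phi)$ by $\varrho_h\phi$. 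So the identity to establish is, schematically,
\begin{equation*}
	\int_0^T\!\!\int_\Om \eff(\varrho_h,\vc{u}_h)\,\varrho_h\phi\psi\,dxdt
	= \int_0^T\!\!\int_\Om \vc{f}_h\cdot\vc{v}_h^m\psi\,dxdt
	- \mu\int_0^T\!\!\int_\Om \Curl_h\vc{u}_h\,\Curl_h\vc{v}_h^m\,\psi\,dxdt
	+ (\text{jump terms}) + o(1).
\end{equation*}

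The steps, in order, would be: (i) establish the a priori bound $\|\vc{v}_h^m\|_{\vc{V}_h}\le C$ uniformly — this uses $\|\Aop{\varrho_h^m\phi}\|_{\vc{W}^{1,2}}\le C\|\varrho_h^m\phi\|_{L^2}$ together with the higher integrability Lemma \ref{lemma:higherorderpressure} and the interpolation estimates of Lemma \ref{lemma:interpolationerror}; (ii) bound the jump terms via Lemma \ref{lemma:jumpcontrol}, which gives an $O(h^{\epsilon/2})$ factor against $\|\vc{u}_h\|_{\vc{V}_h}\|\Grad\Aop{\varrho_h\phi}\|_{L^2}$, hence $\to 0$ after summation in $t$; (iii) handle the curl term: since $\Curl_h\vc{v}_h^m$ is supported only in the layer of elements touching $\partial\Om$ (where $\Aop{\cdot}$ need not be tangential), its $L^2$ norm on that layer is $O(1)$ but its measure shrinks — combined with $\phi\in C_0^\infty(\Om)$ having support away from $\partial\Om$, this contribution vanishes; (iv) take $h\to0$ in the remaining terms using the weak convergences: the source term converges because $\vc{v}_h^m\rightharpoonup\Aop{\varrho\phi}$ weakly while $\vc{f}_h\to\vc{f}$ strongly in $L^2$; the left-hand side requires identifying the weak limit of $\eff(\varrho_h,\vc{u}_h)\varrho_h$.

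The main obstacle — exactly as in the continuous Lions argument — is step (iv), passing to the limit in the product $\eff(\varrho_h,\vc{u}_h)\cdot\varrho_h$, where both factors are only weakly convergent in $L^2$. The resolution is to run the same argument a second time with the roles reversed, i.e.\ to test the \emph{continuity} method with a function built from $\Div\vc{u}$ or, more precisely, to exploit that $\vc{v}_h^m = \Pi_h^V\Aop{\varrho_h^m\phi}$ plays against $\Div_h\vc{u}_h$ in one computation while in the limit equation $\Aop{\varrho\phi}$ plays against $\Div\vc{u}$; the compensated-compactness structure is that $\varrho_h$ is compact in time in $H^{-1}$ (via \eqref{eq:negbound}, the discrete time-derivative bound) while $\Div_h\vc{u}_h$ is compact in space, so the pairing $\iint \Div_h\vc{u}_h\,\varrho_h\phi\psi$ passes to the limit as a div--curl / Aubin--Lions type quantity — this is where Lemma \ref{lemma:aubinlions} is invoked, applied to $f_h = \varrho_h$ (with its spatial translation estimate) and $g_h = $ the relevant component of $\vc{v}_h$ or of $\vc{u}_h$. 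Collecting the limits and comparing with the corresponding identity satisfied by the limit pair $(\varrho,\vc{u})$ tested against $\Aop{\varrho\phi}$ then yields the claimed equality with $\overline{\eff(\varrho,\vc{u})}$ on the right.
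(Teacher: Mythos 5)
Your setup is the right one --- test the velocity method with a function of the form $\Pi_h^V$ applied to $\Aop{\cdot}$ of the density, use the commutation property \eqref{eq:commute} to kill the curl part, and dispose of the penalty terms via Lemma \ref{lemma:jumpcontrol} --- but there is a genuine gap at your step (iv), and it is precisely the step the lemma is about. Testing with $\Pi_h^V\Aop{\varrho_h\phi}$ leaves you staring at the product $\eff(\varrho_h,\vc{u}_h)\,\varrho_h$ of two merely weakly convergent sequences, and your proposed resolution does not close it. The pairing $\iint \Div_h\vc{u}_h\,\varrho_h\,\phi\psi$ is \emph{not} a div--curl/Aubin--Lions quantity in the form you describe: Lemma \ref{lemma:aubinlions} needs one factor with a uniform spatial translation estimate (hypothesis (4)), and neither $\Div_h\vc{u}_h$ (only bounded in $L^2$, no spatial compactness) nor $\varrho_h$ (only weakly convergent, no translation estimate --- the translation estimate of Lemma \ref{lemma:compactembedding} is for $\vc{u}_h$, not for $\varrho_h$ or $\Div_h\vc{u}_h$) supplies it. Worse, even if that pairing could be handled, the term $\iint p(\varrho_h)\,\varrho_h\,\phi\psi$ is a product of two nonlinear functions of the same weakly convergent $\varrho_h$ and is inaccessible to any such lemma; the whole point of the effective viscous flux identity is that only the \emph{combination} $p(\varrho_h)-(\lambda+\mu)\Div_h\vc{u}_h$ paired against $\varrho_h$ has a weak continuity property.

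The missing idea is to subtract the weak limit \emph{inside} the operator: the paper's test function is $\psi\,\Pi_h^V\bigl[\phi\,\Aop{\varrho_h-\varrho}\bigr]$. This produces on the left the quantity $\iint\eff(\varrho_h,\vc{u}_h)(\varrho_h-\varrho)\phi\psi$, and the point is that $\Aop{\varrho_h-\varrho}\to 0$ \emph{strongly} in $L^2(0,T;\vc{L}^2(\Om))$: since $\Aop{\cdot}=\Grad\Delta^{-1}$ gains a derivative, $\Aop{\varrho_h}\inb L^2(0,T;W^{1,2}(\Om))$ (spatial compactness), while the discrete continuity equation gives $\Pth\Aop{\varrho_h}=\Aop{\Pth\varrho_h}\inb L^1(0,T;W^{-1,1}(\Om))$ by \eqref{eq:negbound} (time compactness); Lemma \ref{lemma:aubinlions} applied to $\bigl(\Aop{\varrho_h}\bigr)^2$ then yields the strong convergence. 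With this, every term on the right-hand side of the resulting identity is a weakly convergent sequence paired against something converging strongly to zero (or a jump term of size $O(h^{\epsilon/2})$), so the whole right-hand side vanishes and one concludes $\lim_h\iint\eff(\varrho_h,\vc{u}_h)(\varrho_h-\varrho)\phi\psi=0$; since $\varrho\phi\psi$ is a fixed $L^2$ function, $\iint\eff(\varrho_h,\vc{u}_h)\varrho\phi\psi\to\iint\overline{\eff}\,\varrho\phi\psi$ and the lemma follows. Note also that keeping $\phi$ \emph{outside} the operator, $\phi\,\Aop{\varrho_h-\varrho}$, makes the test function compactly supported (so your boundary-layer worry in step (iii) disappears) at the cost of a curl equal to $\Pi_h^Q(\Grad\phi\times\Aop{\varrho_h-\varrho})$, which is again harmless because it converges strongly to zero; with your choice $\Aop{\varrho_h\phi}$ the function is nonlocal and genuinely fails to vanish near $\partial\Om$, and your "shrinking measure" argument does not control $\int\Curl_h\vc{u}_h\,\Curl_h\vc{v}_h$ there, since you have no smallness of $\|\Curl_h\vc{u}_h\|_{L^2}$ on the boundary layer.
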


\begin{proof}
Fix $\phi \in C_{0}^\infty(\Omega)$, $\psi \in C^\infty(0,T)$, and for each $h>0$ 
introduce the test function
$$
\vc{v}_{h}(\cdot,t)= \psi\Pi_{h}^V 
\left[\phi \Aop{\varrho_{h}-\varrho}(\cdot,t)\right], 
\qquad t \in (0,T).
$$
where the operator $\Aop{\cdot}$ is defined in \eqref{def:aop}.	

By virtue of \eqref{eq:commute} 
and $\Curl \Aop{\cdot} = 0$, we have the identities
\begin{align*}
	\Div_{h} \vc{v}_{h} 
	= \psi\Pi_{h}^Q\left(\Grad\phi \Aop{\varrho_{h} - \varrho}\right)
	+ \psi\Pi_{h}^Q \left(\phi(\varrho_{h}-\varrho)\right)
\end{align*}
and
\begin{equation*}
	\Curl_{h} \vc{v}_{h}=\psi\Pi_{h}^Q 
	\left( \Grad\phi \times 
	\gradlaplaceinv{\varrho_{h}-\varrho}\right).
\end{equation*}

For $m=1, \ldots, M$, set 
$\vc{v}_{h}^m := \frac{1}{\Delta t}\int_{t^{m-1}}^{t^m} \vc{v}_{h}(\cdot, s)\ ds$.
Taking $\vc{v}_{h}^m$ as test function in the velocity method \eqref{FEM:momentumeq},
utilizing the above identities, multiplying by $\Delta t$, and summing over $m$,
we obtain
\begin{equation} \label{eq:viscconv1}
	\begin{split}
		&\int_{0}^{T}\int_{\Om} \eff(\vr_h, \vu_h)(\varrho_{h} - \varrho)\phi\psi\ dxds 
		\\ &
		= -\int_{0}^{T}\int_{\Om} \eff(\vr_h, \vu_h)\Grad \phi 
		\cdot \Aop{\varrho_{h}-\varrho}\psi
		+ \vc{f}_{h}\left(\Pi_{h}^V \phi \Aop{\varrho_{h}-\varrho }\right)\psi \ dxds 
		\\ & \qquad 
		+\int_0^T\int_\Om  \mu\Curl_{h} \vc{u}_{h} 
		\left(\Grad\phi \times \Aop{\varrho_{h} - \varrho}\right)\psi~ dxds \\
		& \qquad  + \mu\sum_{\Gamma \in \Gamma^I_{h}}h^{\epsilon-1}
		\int_{0}^T\psi\int_{\Gamma}\sjump{\vc{u}_{h} \cdot \nu}
		\sjump{\vc{v}_{h} \cdot \nu}
		+\sjump{\vc{u}_{h} \times \nu}
		\sjump{\vc{v}_{h} \times \nu} \ dS(x)ds.
	\end{split}
\end{equation}

In view of \eqref{eq:negbound}, the following $h$--independent bounds are immediate:
\begin{equation}\label{eq:abound}
	\begin{split}
		& \Pth\Aop{\vrho_{h}} = \Aop{\Pth \vr_h} \inb L^1(0,T;W^{-1,1}(\Om)),\\
		& \Aop{\vrho_{h}} \inb L^2(0,T;W^{1,2}(\Om)).
	\end{split}
\end{equation}
Consequently, Lemma \ref{lemma:aubinlions} can be applied with the result that 
$\left(\Aop{\vr_h}\right)^2 \overset{h \rightarrow 0}{\weak} 
\bigl(\Aop{\vr}\bigr)^2$. Thus, 
\begin{equation}\label{eq:aconv}
	\Aop{\vrho_{h} - \vrho} \overset{h \rightarrow 0}{\rightarrow} 0,
	\quad \text{in }L^2(0,T;\vc{L}^2(\Om)).
\end{equation}
Now, using \eqref{eq:aconv} together with \eqref{eq:conv1} and \eqref{eq:conv2},  
we send $h \rightarrow 0$ in  \eqref{eq:viscconv1} to obtain
\begin{equation}\label{eq:onlyfluxleft}
	\begin{split}
		&\lim_{h \rightarrow 0}\int_{0}^{T}\int_{\Om}\eff(\vr_h, \vu_h)(\varrho_{h} 
		- \varrho)\phi\psi\ dxds \\
		&=\lim_{h \rightarrow 0} \mu\sum_{\Gamma \in 
		\Gamma^I_{h}}h^{\epsilon -1}\int_{0}^T\psi\int_{\Gamma}
		\jump{\vc{u}_{h} \cdot \nu}\jump{\Pi_{h}^V (\phi \Aop{\varrho_{h}
		-\varrho})\cdot \nu} \\
		& \qquad\qquad \qquad \qquad \qquad \qquad
		+\jump{\vc{u}_{h} \times \nu}\jump{\Pi_{h}^V 
		(\phi \Aop{\varrho_{h}- \varrho})\times \nu} \ dS(x)dt.
	\end{split}
\end{equation}

Lemma \ref{lemma:jumpcontrol} yields
\begin{equation*}
	\begin{split}
		& \left|\mu\sum_{\Gamma \in \Gamma^I_{h}}h^{\epsilon -1}\int_{0}^T\psi\int_{\Gamma}
		\sjump{\vc{u}_{h} \cdot \nu}\sjump{\Pi_{h}^V (\phi \Aop{\varrho_{h}
		-\varrho})\cdot \nu} \ dS(x)ds\right| \\
		&\qquad \quad 
		+\left|\mu\sum_{\Gamma \in \Gamma^I_{h}}
		h^{\epsilon -1}\int_{0}^T\psi\int_{\Gamma}
		\sjump{\vc{u}_{h} \times \nu}\sjump{\Pi_{h}^V (\phi \Aop{\varrho_{h}
		-\varrho})\times \nu} \ dS(x)ds \right| \\
		&\leq h^\frac{\epsilon}{2}C\|\psi\|_{L^\infty(0,T)}\|\vc{u}_{h}\|_{L^2(0,T;\vc{V}_{h}(\Om))}
		\|\Grad (\phi\Aop{\varrho_{h} - \varrho})\|_{L^2(0,T;\vc{L}^2(\Om))} 
		\leq Ch^\frac{\epsilon}{2},
	\end{split}
\end{equation*}
where the last inequality follows from \eqref{eq:abound} and Lemmas \ref{lemma:stability} and \ref{lemma:higherorderpressure}. 
Applying the previous bound to \eqref{eq:onlyfluxleft} yields the desired result.
\end{proof}

We can now infer the strong convergence of the density approximations.

\begin{lemma}[Strong convergence of $\vrho_h$]\label{lem:strong-conv}
Suppose that \eqref{eq:conv1}--\eqref{eq:conv2} holds. 
Then, passing to a subsequence as $h\to 0$ if necessary,
$$
\vrho_{h} \rightarrow \vrho\quad 
\text{a.e.~in~$(0,T)\times \Omega$.}
$$
\end{lemma}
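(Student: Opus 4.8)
The plan is to follow the classical Lions--Feireisl strategy for upgrading weak convergence of the densities to a.e. convergence, using the renormalized continuity method and the weak continuity of the effective viscous flux established in Lemma \ref{lemma:effectiveflux}. First I would choose the renormalization $B(\vrho) = \vrho\log\vrho$, so that $b(\vrho) = \vrho B'(\vrho) - B(\vrho) = \vrho$. Applying the renormalized continuity method \eqref{FEM:renormalized} with $\phi_h \equiv 1$, and using the fact that the numerical diffusion terms coming from the second-order Taylor remainders are nonnegative (because $B'' \ge 0$), I obtain, after summing over time steps, an inequality of the form
\begin{equation*}
	\int_\Omega \vrho_h(T)\log\vrho_h(T)\,dx + \int_0^T\!\!\int_\Omega \vrho_h \Div_h\vu_h\,dxdt \le \int_\Omega \vrho_h^0\log\vrho_h^0\,dx.
\end{equation*}
Passing to the limit $h \to 0$, using the $C([0,T];L^p_{\mathrm{weak}})$ convergence of $\vrho_h\log\vrho_h$ to $\overline{\vrho\log\vrho}$ and the weak convergence $\vrho_h\Div_h\vu_h \weak \overline{\vrho\Div\vu}$, yields an inequality for the limit quantities.

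Next I would derive the companion identity for the limit system. Since $(\vrho,\vu)$ is a weak solution of the continuity equation (Lemma \ref{lemma:convergence-contequation}) with $\vrho \in L^2$, Lemma \ref{lemma:feireisl} shows $\vrho$ is a renormalized solution; choosing again $B(\vrho) = \vrho\log\vrho$ and integrating gives
\begin{equation*}
	\int_\Omega \vrho(T)\log\vrho(T)\,dx + \int_0^T\!\!\int_\Omega \vrho\Div\vu\,dxdt = \int_\Omega \vrho_0\log\vrho_0\,dx.
\end{equation*}
Subtracting this from the limit of the discrete inequality, and using that $\vrho_h^0 \to \vrho_0$ strongly so the initial terms cancel, leaves
\begin{equation*}
	\int_\Omega \left(\overline{\vrho\log\vrho} - \vrho\log\vrho\right)(T)\,dx \le \int_0^T\!\!\int_\Omega \left(\vrho\Div\vu - \overline{\vrho\Div\vu}\right)\,dxdt.
\end{equation*}
The right-hand side is where the effective viscous flux enters: writing $\Div\vu = \frac{1}{\lambda+\mu}\bigl(p(\vrho) - \eff(\vrho,\vu)\bigr)$ discretely and in the limit, Lemma \ref{lemma:effectiveflux} (applied with a cutoff $\phi$ approximating $\mathbf{1}_\Omega$ and $\psi$ approximating $\mathbf{1}_{(0,T)}$, then removed) gives $\overline{\eff(\vrho,\vu)\,\vrho} = \overline{\eff(\vrho,\vu)}\,\vrho$, so that
\begin{equation*}
	\int_0^T\!\!\int_\Omega \left(\vrho\Div\vu - \overline{\vrho\Div\vu}\right)\,dxdt = -\frac{1}{\lambda+\mu}\int_0^T\!\!\int_\Omega \left(\overline{\vrho^{\gamma+1}} - \vrho\,\overline{\vrho^\gamma}\right)\cdot a\,dxdt \cdot \frac{1}{a} \le 0,
\end{equation*}
the sign following from monotonicity of $\vrho \mapsto \vrho^\gamma$ and the standard fact that weak limits satisfy $\overline{\vrho^{\gamma+1}} \ge \vrho\,\overline{\vrho^\gamma}$ (a consequence of $\int(\vrho_h^\gamma - \vrho^\gamma)(\vrho_h - \vrho) \ge 0$). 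Hence the left-hand side is $\le 0$; but convexity of $s \mapsto s\log s$ and Lemma \ref{lem:prelim} force $\overline{\vrho\log\vrho} \ge \vrho\log\vrho$ a.e., so in fact $\overline{\vrho\log\vrho} = \vrho\log\vrho$ a.e. on $\Omega$ for a.e. $t$. The strict convexity part of Lemma \ref{lem:prelim} then yields $\vrho_h \to \vrho$ a.e. in $(0,T)\times\Omega$ along a subsequence.

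The main obstacle I anticipate is the careful justification of the limit passage in the discrete renormalized identity: one must control the several numerical diffusion terms in \eqref{FEM:renormalized} (showing they have a favorable sign and may be discarded, rather than needing to be estimated), and handle the fact that $B(\vrho)=\vrho\log\vrho$ is not $C^1$ up to $0$ and not globally Lipschitz, which in the continuous Lions theory requires a truncation argument $B_k(\vrho) = \vrho\log(\vrho + 1/k) $ or the use of $B(\vrho) = \vrho\log(1+\vrho)$ together with the uniform $L^{2\gamma}$ bound on $\vrho_h$ to pass to the limit in $b(\vrho_h)\Div_h\vu_h$. A secondary technical point is converting the localized statement of Lemma \ref{lemma:effectiveflux} (with compactly supported $\phi$) into the global identity needed above; this is done by exhausting $\Omega$ with cutoffs and using the equi-integrability provided by the $L^{2\gamma}$, $\gamma>1$, bound on $\vrho_h$ and the $L^2$ bounds on $\eff(\vrho_h,\vu_h)$ from Lemmas \ref{lemma:stability} and \ref{lemma:higherorderpressure}.
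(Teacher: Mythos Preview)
Your proposal is correct and follows essentially the same approach as the paper: apply the renormalized scheme \eqref{FEM:renormalized} with $B(\vrho)=\vrho\log\vrho$ and $\phi_h\equiv 1$, discard the nonnegative numerical diffusion terms by convexity, pass to the limit, subtract the corresponding renormalized identity for the limit $(\vrho,\vc{u})$, and conclude via Lemma~\ref{lemma:effectiveflux}, monotonicity of $\vrho\mapsto\vrho^\gamma$, and Lemma~\ref{lem:prelim}. The technical points you flag (regularity of $B$ near $0$ and the passage from compactly supported $\phi$ in Lemma~\ref{lemma:effectiveflux} to the global inequality) are in fact glossed over in the paper, so your anticipation of them is more careful than the paper's own treatment.
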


\begin{proof}	
In view of Lemma \ref{lemma:convergence-contequation}, the 
limit $(\vrho,\vc{u})$ is a weak solution 
of the continuity equation and hence, by Lemma \ref{lemma:feireisl}, 
also a renormalized solution:
$$
\left(\vrho\log \vrho\right)_t 
+ \Div \left( \left(\vrho\log\vrho\right)
\vc{u}\right)=\vrho \Div \vc{u} 
\quad \text{in the weak sense on $\cDom$.}
$$

Since $t\mapsto \vrho\log \vrho$ is continuous 
with values in an appropriate Lebesgue space 
equipped with the weak topology, we can use this 
equation to obtain for any $t>0$
\begin{equation}\label{eq:stngdenconv-eq1}
	\int_{\Omega} \left(\vrho \log \vrho\right)(t)\ dx
	-\int_{\Omega}\vrho_{0}\log \vrho_{0}\ dx
	= -\int_{0}^t \int_{\Omega}\vrho \Div \vc{u}\ dxds
\end{equation}

Next, we specify $\phi_h\equiv 1$ as test function in the 
renormalized scheme \eqref{FEM:renormalized}, multiply by $\Delta t$,
and sum the result over $m$. Making use of the 
convexity of $z\log z$, we conclude that for any $m=1,\dots,M$ 
\begin{equation}\label{eq:stngdenconv-eq2}
	\int_{\Omega}\vrho^m_{h}\log \vrho^m_{h}\ dx
	-\int_{\Omega}\vrho^{0}_h\log \vrho^{0}_h\ dx  
	\leq -\sum_{k=1}^m \Delta t\int_{\Omega}\vrho^m_{h}
	\Div \vc{u}^m_{h}\ dxdt.
\end{equation}

In view of the convergences stated at the beginning of this section 
and strong convergence of the initial data, we 
can send $h \to 0$ in \eqref{eq:stngdenconv-eq2} to obtain
\begin{equation}\label{eq:stngdenconv-eq3}
	\int_{\Omega} \Bigl(\overline{\vrho \log \vrho}\Bigr)(t)\ dx
	-\int_{\Omega}\vrho_{0}\log \vrho_{0}\ dx
	\le -\int_{0}^t \int_{\Omega}\overline{\vrho \Div \vc{u}}\ dxds.
\end{equation}

Subtracting \eqref{eq:stngdenconv-eq1} 
from \eqref{eq:stngdenconv-eq3} gives
\begin{align*}
	\int_{\Omega}\Bigl(\overline{\vrho \log \vrho}-\vrho \log \vrho\Bigr)(t)\ dx
	& \leq -\int_{0}^t\int_{\Omega}
	\overline{\vrho\Div \vc{u}}-\vrho \Div \vc{u}\ dxds,
\end{align*}
for any $t\in (0,T)$. Lemma \ref{lemma:effectiveflux} tells us that
\begin{equation*}
	\int_{0}^t\int_{\Omega} \left(\overline{\vrho \Div \vc{u}}
	-\vrho \Div \vc{u}\right)\phi  ~dxds 
	= \frac{a}{\mu + \lambda}\int_{0}^t\int_{\Omega}
	\left(\overline{\vrho^{\gamma +1}}
	-\overline{\vrho^\gamma} \vrho\right)\phi~ dxds\ge 0,
\end{equation*}
for all $\phi\in C_0^\infty(\Om)\cap \Set{\phi \geq 0}$, where the last inequality follows 
as in \cite{Feireisl:2004oe,Lions:1998ga}, so 
the following relation holds: 
$$
\overline{\vrho \log \vrho}=\vrho \log \vrho
\quad \text{a.e.~in $\Dom$.}
$$
Now an application of Lemma \ref{lem:prelim} finishes the proof.
\end{proof}

\subsection{Velocity method}


\begin{lemma}[Velocity equation]
The limit pair $(\vr, \vu)$ constructed in \eqref{eq:conv1}--\eqref{eq:conv2}
is a weak solution to the velocity equation \eqref{eq:momentumeq} in the
sense of Definition \ref{def:weak}.
\end{lemma}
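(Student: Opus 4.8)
The plan is to pass to the limit $h\to 0$ in the discrete velocity method \eqref{FEM:momentumeq}, using the commuting relations \eqref{eq:commute} to move the discrete curl and divergence onto the test function and, at the very end, the continuous identity \eqref{eq:laplace} to recover the $\Grad\vc{u}\,\Grad\vc{\phi}$ form demanded by Definition \ref{def:weak}. Concretely, I would fix $\vc{\phi}\in\vc{C}_0^\infty([0,T)\times\Om)$ and set $\vc{v}_h^m:=\Pi_h^V\bigl[\frac{1}{\Delta t}\int_{t^{m-1}}^{t^m}\vc{\phi}(s,\cdot)\,ds\bigr]\in\vc{V}_h(\Om)$. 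Using $\vc{v}_h^m$ as test function in \eqref{FEM:momentumeq}, multiplying by $\Delta t$, summing over $m=1,\dots,M$, and recalling that $(\varrho_h,\vc{u}_h)$ is piecewise constant in time, one arrives at
\begin{align*}
	&\int_0^T\int_\Om \mu\,\Curl_h\vc{u}_h\,\Curl_h\Pi_h^V\vc{\phi}
	+\bigl[(\mu+\lambda)\Div_h\vc{u}_h-p(\varrho_h)\bigr]\Div_h\Pi_h^V\vc{\phi}\ dxdt \\
	&\quad+\mu\sum_{\Gamma\in\Gamma_h^I}h^{\epsilon-1}\int_0^T\int_\Gamma
	\sjump{\vc{u}_h\cdot\nu}\sjump{\Pi_h^V\vc{\phi}\cdot\nu}
	+\sjump{\vc{u}_h\times\nu}\sjump{\Pi_h^V\vc{\phi}\times\nu}\ dS(x)dt \\
	&\qquad\qquad=\int_0^T\int_\Om\vc{f}_h\,\Pi_h^V\vc{\phi}\ dxdt.
\end{align*}

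Next I would send $h\to 0$ term by term. By \eqref{eq:commute} one has $\Curl_h\Pi_h^V\vc{\phi}=\Pi_h^Q\Curl\vc{\phi}$ and $\Div_h\Pi_h^V\vc{\phi}=\Pi_h^Q\Div\vc{\phi}$; by Lemma \ref{lemma:interpolationerror} these converge strongly in $L^2((0,T)\times\Om)$ to $\Curl\vc{\phi}$ and $\Div\vc{\phi}$, and likewise $\Pi_h^V\vc{\phi}\to\vc{\phi}$ strongly in $\vc{L}^2((0,T)\times\Om)$. Coupled with the weak convergences \eqref{eq:conv2} of $\Curl_h\vc{u}_h$ and $\Div_h\vc{u}_h$, this lets me pass to the limit in the curl term and in the term $(\mu+\lambda)\Div_h\vc{u}_h\,\Div_h\Pi_h^V\vc{\phi}$; the source term converges since $\vc{f}_h\to\vc{f}$ strongly in $\vc{L}^2((0,T)\times\Om)$; and the jump terms are controlled by Lemma \ref{lemma:jumpcontrol} together with the energy estimate of Lemma \ref{lemma:stability}, which bounds them by $Ch^{\epsilon/2}\|\vc{u}_h\|_{L^2(0,T;\vc{V}_h(\Om))}\|\Grad\vc{\phi}\|_{L^2((0,T)\times\Om)}\to0$. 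The only term for which the a priori bounds do not suffice is the pressure term $-\int_0^T\int_\Om p(\varrho_h)\,\Pi_h^Q\Div\vc{\phi}\,dxdt$: here I would invoke the strong convergence $\varrho_h\to\varrho$ a.e.\ in $(0,T)\times\Om$ from Lemma \ref{lem:strong-conv} together with the bound $p(\varrho_h)\inb L^2((0,T)\times\Om)$ from Lemma \ref{lemma:higherorderpressure} to obtain $p(\varrho_h)\weak p(\varrho)$ in $L^2$, and then pair this with the strongly convergent $\Pi_h^Q\Div\vc{\phi}$ to get the limit $-\int_0^T\int_\Om p(\varrho)\Div\vc{\phi}\,dxdt$.

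Collecting the limits, the discrete identity passes to
\begin{align*}
	&\int_0^T\int_\Om \mu\bigl(\Curl\vc{u}\,\Curl\vc{\phi}+\Div\vc{u}\,\Div\vc{\phi}\bigr)
	+\bigl[(\mu+\lambda)\Div\vc{u}-p(\varrho)\bigr]\Div\vc{\phi}\ dxdt \\
	&\qquad=\int_0^T\int_\Om\vc{f}\,\vc{\phi}\ dxdt.
\end{align*}
Since $\vc{\phi}(t,\cdot)\in\vc{C}_0^\infty(\Om)\subset\vc{W}_0^{1,2}(\Om)$ and $\vc{u}(t,\cdot)\in\vc{W}_0^{1,2}(\Om)$ for a.e.\ $t\in(0,T)$, applying the identity \eqref{eq:laplace} pointwise in time replaces $\mu(\Curl\vc{u}\,\Curl\vc{\phi}+\Div\vc{u}\,\Div\vc{\phi})$ by $\mu\Grad\vc{u}\,\Grad\vc{\phi}$, turning the last display into exactly the weak velocity formulation \eqref{eq:weak-u}. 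The step I expect to be delicate is the pressure term, but it is already handled by the a.e.\ convergence $\varrho_h\to\varrho$ secured in Lemma \ref{lem:strong-conv}; everything else is a routine combination of the commuting relations \eqref{eq:commute}, the interpolation estimates, and the weak convergences collected at the start of this section.
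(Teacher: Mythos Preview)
Your proof is correct and follows essentially the same route as the paper's own argument: project a smooth test function via $\Pi_h^V$, use the commuting relations \eqref{eq:commute} to replace $\Div_h\Pi_h^V\vc{\phi}$ and $\Curl_h\Pi_h^V\vc{\phi}$ by $\Pi_h^Q\Div\vc{\phi}$ and $\Pi_h^Q\Curl\vc{\phi}$, kill the jump terms with Lemma~\ref{lemma:jumpcontrol}, and pass to the limit in the pressure term via Lemma~\ref{lem:strong-conv} together with the $L^2$ bound of Lemma~\ref{lemma:higherorderpressure}. Your write-up is in fact a bit more explicit than the paper's, which leaves the final appeal to the identity \eqref{eq:laplace} implicit.
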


\begin{proof}
Fix $\vc{v} \in L^2(0,T;\vc{W}^{1,2}_0(\Om))$, and set 
$\vc{v}_h = \Pi_h^V \vc{v}$ and 
$\vc{v}_h^m = \frac{1}{\Delta t}
\int_{t^{m-1}}^{t^m}\vc{v}_h\ dt$.

Then, setting $\vc{v}^m_h$ as test function in the velocity method \eqref{FEM:momentumeq}, 
multiplying with $\Delta t$, and summing over all $m=1,\ldots,M$, leads
to the identity
\begin{equation}\label{eq:lastinpaper}
	\begin{split}
		&\int_0^T\int_\Om \mu \Curl_h \vu_h \Curl \vc{v} 
		+ \left[(\mu +\lambda) \Div_h \vu_h - p(\vr_h) \right]\Div \vc{v} \ dxdt \\
		&\quad \quad + \mu
		\sum_{\Gamma \in \Gamma^I_{h}}h^{\epsilon-1}\int_0^T\int_{\Gamma}
		\sjump{\vc{u}_{h}\cdot \nu}\sjump{\left(\Pi_{h}^V\vc{v}\right)\cdot \nu} \\
		&\qquad \qquad \qquad \qquad \qquad  
		+ \sjump{\vc{u}_{h}\times \nu}\sjump{\left(\Pi_{h}^V\vc{v}\right)\times \nu} \ dS(x)dt	 \\
		&= \int_0^T\int_\Om \vc{f}_h \Pi_h^V\vc{v} \ dxdt,
	\end{split}
\end{equation}
where we have also used \eqref{eq:commute}. From Lemma \ref{lem:strong-conv} and \eqref{eq:conv1},
we have that $p(\vr_h) \overset{h \rightarrow 0}{\rightarrow} p(\vr)$ in $L^2(0,T;L^2(\Om))$.
Furthermore,  Lemma \ref{lemma:jumpcontrol} tells us that the jump terms 
converge to zero. Hence, we can send $h \rightarrow 0$ in \eqref{eq:lastinpaper}
to obtain that the limit $(\vr, \vu)$ constructed in \eqref{eq:conv1}--\eqref{eq:conv2}
satisfies \eqref{eq:weak-u} for all test functions $\vc{v} \in L^2(0,T;\vc{W}^{1,2}_0(\Om))$.
\end{proof}


\begin{thebibliography}{10}



\bibitem{Arnold}
D.~N. Arnold, L.~R. Scott, and M.~Vogelius.
\newblock Regular inversion of the divergence operator with {D}irichlet boundary conditions on a polygon.
\newblock {\em Ann. Scuola Norm. Sup. Pisa Cl. Sci. (4)}, 15(2):169--192, 1988.





\bibitem{Brenner}
S.~C. ~Brenner, J.~Cui, L-Y.~Sung,
\newblock A nonconforming finite element method for a two-dimensional curl-curl and grad-div problem.
\newblock {\em Numer. Math.}, 109 (4):509--533, 2008.

 

\bibitem{Brezzi:1991lr}
F.~Brezzi and M.~Fortin.
\newblock {\em Mixed and hybrid finite element methods}, volume~15 of {\em
  Springer Series in Computational Mathematics}.
\newblock Springer-Verlag, New York, 1991.

%
%

\bibitem{Crouzeix:1973qy}
M.~Crouzeix and P.-A. Raviart.
\newblock Conforming and nonconforming finite element methods for solving the
  stationary {S}tokes equations. {I}.
\newblock {\em Rev. Fran\c caise Automat. Informat. Recherche Op\'erationnelle
  S\'er. Rouge}, 7(R-3):33--75, 1973.




\bibitem{Feireisl:2004oe}
E.~Feireisl.
\newblock {\em Dynamics of viscous compressible fluids}, volume~26 of {\em
  Oxford Lecture Series in Mathematics and its Applications}.
\newblock Oxford University Press, Oxford, 2004.


\bibitem{Feisteuer}
M.~Feistauer, J.~Felcman, I.~ Stra{\v{s}}kraba.
\newblock Mathematical and computational methods for compressible flow.
\newblock {\em Numerical Mathematics and Scientific Computation, Oxford}, 2003.


\bibitem{Gallouet1}
T.~Gallou{\"e}t, R.~Herbin, and J.-C. Latch{\'e}.
\newblock A convergent finite element-finite volume scheme for the compressible Stokes problem. Part I: The isothermal case.
\newblock {\em Math. Comp}, Online, 2009.

\bibitem{Gallouet2}
T.~Gallou{\"e}t, R.~Herbin, and J.-C. Latch{\'e}.
\newblock A convergent finite element-finite volume scheme for the compressible Stokes problem. Part I: The isentropic case.
\newblock {\em Preprint}, 2009.


\bibitem{Gallouet:2007lr}
T.~Gallou{\"e}t, L.~Gastaldo, R.~Herbin, and J.-C. Latch{\'e}.
\newblock An unconditionally stable pressure correction scheme for the
  compressible barotropic {N}avier-{S}tokes equations.
\newblock {\em M2AN Math. Model. Numer. Anal.}, 42(2):303--331, 2008.

\bibitem{Girault:1986fu}
V.~Girault and P.-A. Raviart.
\newblock {\em Finite element methods for {N}avier-{S}tokes equations},
  volume~5 of {\em Springer Series in Computational Mathematics}.
\newblock Springer-Verlag, Berlin, 1986.
\newblock Theory and algorithms.

%

\bibitem{Karlsen1}
K.~H.~Karlsen, T.~K.~Karper.
\newblock Convergence of a mixed method
  for a semi--stationary {S}tokes system.
\newblock {\em Preprint}, 2009.



\bibitem{Lions:1998ga}
P.-L. Lions.
\newblock {\em Mathematical topics in fluid mechanics. {V}ol. 2: Compressible
  models.}
\newblock Oxford University Press, New York, 1998.


\bibitem{Nedelec:1980ec}
J.-C. N{\'e}d{\'e}lec.
\newblock Mixed finite elements in {${\bf R}\sp{3}$}.
\newblock {\em Numer. Math.}, 35(3):315--341, 1980.



\bibitem{Stummel:1980fk}
F.~Stummel.
\newblock Basic compactness properties of nonconforming and hybrid finite
  element spaces.
\newblock {\em RAIRO Anal. Num\'er.}, 14(1):81--115, 1980.


\bibitem{Zarnowski:1991uq}
R.~Zarnowski and D.~Hoff.
\newblock A finite-difference scheme for the {N}avier-{S}tokes equations of
  one-dimensional, isentropic, compressible flow.
\newblock {\em SIAM J. Numer. Anal.}, 28(1):78--112, 1991.

\bibitem{Zhao:1994fk}
J.~Zhao and D.~Hoff.
\newblock A convergent finite--difference scheme for the {N}avier--{S}tokes
  equations of one--dimensional, nonisentropic, compressible flow.
\newblock {\em SIAM J. Numer. Anal.}, 31(5):1289--1311, 1994.

\bibitem{Zhao:1997qy}
J.~J. Zhao and D.~Hoff.
\newblock Convergence and error bound analysis of a finite-difference scheme
  for the one-dimensional {N}avier-{S}tokes equations.
\newblock In {\em Nonlinear evolutionary partial differential equations
  ({B}eijing, 1993)}, volume~3 of {\em AMS/IP Stud. Adv. Math.}, pages
  625--631. Amer. Math. Soc., Providence, RI, 1997.

\end{thebibliography}

\end{document}